\newcommand{\e}{\varepsilon}
\newcommand{\R}{\mathbb{R}}
\newcommand{\I}{\mathbb{I}_d}
\newcommand{\Rd}{\R^d}
\newcommand{\Sd}{\mathbb{S}^{d-1}}
\newcommand{\A}{\mathcal{A}}
\newcommand{\Ld}{\mathcal{L}^d}
\newcommand{\Hd}{\mathcal{H}^{d-1}}
\newcommand{\N}{\mathbb{N}}
\newcommand{\mres}{\mathbin{\vrule height 1.6ex depth 0pt width
0.13ex\vrule height 0.13ex depth 0pt width 1.3ex}}
\newtheorem{theorem}{Theorem}[section]
\newtheorem{corollary}[theorem]{Corollary}
\newtheorem{lemma}[theorem]{Lemma}
\newtheorem{proposition}[theorem]{Proposition}
\newtheorem{remark}[theorem]{Remark}
\begin{document}
\title[Singular perturbations for anisotropic higher-order materials] {Singular perturbations models in phase transitions for anisotropic higher-order materials}
\author [Giuseppe Cosma Brusca, Davide Donati, and Chiara Trifone]{Giuseppe Cosma Brusca, Davide Donati, and Chiara Trifone \\
\tiny SISSA \\ \tiny
via Bonomea 265\\ \tiny
34136 Trieste, Italy}

\thanks{Email addresses: gbrusca@sissa.it, ddonati@sissa.it, ctrifone@sissa.it}

\begin{abstract}
We discuss a model for phase transitions in which a double-well potential is singularly perturbed by possibly several terms involving different, arbitrarily high orders of derivation. We study by $\Gamma$-convergence the asymptotic behaviour as $\e\to0$ of the functionals 
\begin{equation*}
    F_\e(u):=\int_\Omega \Bigl[\frac{1}{\e}W(u)+\sum_{\ell=1}^{k}q_\ell\e^{2\ell-1}|\nabla^{(\ell)}u|_\ell^2\Bigr]\,dx, \qquad u\in H^k(\Omega),
\end{equation*} for fixed $k>1$ integer, addressing also the case in which the coefficients $q_1,...,q_{k-1}$ are negative  and $|\cdot|_\ell$ is any norm on the space of symmetric $\ell$-tensors for each $\ell\in\{1,...,k\}$. 
The negativity of the coefficients leads to the lack of a priori bounds on the functionals; such issue is overcome by proving a nonlinear interpolation inequality. With this inequality at our disposal, a compactness result is achieved by resorting to the recent paper \cite{BDS}.
A further difficulty is the presence of general tensor norms which carry anisotropies, making standard slicing arguments not suitable. We prove that the $\Gamma$-limit is finite only on sharp interfaces and that it equals an anisotropic perimeter, with a surface energy density described by a cell formula.

\medskip

{\textbf{MSC 2020 codes:} 49J45, 26B30, 74N15, 74G65,}

{\textbf{Keywords:} Phase transitions, Singular perturbations, Higher-order derivatives, $\Gamma$-convergence, Interpolation inequalities.}

\end{abstract}
\maketitle

\section{Introduction}
In the recent paper \cite{BDS}, it is studied a model for phase transitions in which a double-well potential is perturbed by a singular term involving derivatives of high order. In a one-dimensional setting, this model is described by the family of functionals defined for $\e>0$ as
\begin{equation}\label{functionalBDS1d}
 E^1_\e(u, I):= \begin{cases} 
 \displaystyle\int_I\Bigl[\frac{1}{\e}W(u) + \e^{2k-1}|u^{(k)}|^2\Bigr]\,dt  & \text{ if } u\in H^k(I), \\
     +\infty & \text{ if } u\in L^2(I)\setminus H^k(I),
\end{cases}
\end{equation}
where $I$ is a bounded, open interval, $W$ is a non-negative double-well potential, and $k$ is a fixed positive integer that can be assumed to be strictly larger than $2$, the cases $k=1$ and $k=2$ being already studied by Modica and Mortola in \cite{MM}  (see also \cite{Modica1987}) and Fonseca and Mantegazza in \cite{FM}, respectively. As customary for phase transition models, the study of the functionals \eqref{functionalBDS1d} is divided in two parts: first, compactness properties are investigated as $\e$ tends to $0$, here it is proved that a family of functions having equi-bounded energy has a sharp interface as a cluster point; then, the asymptotic analysis is performed, in this case, by $\Gamma$-convergence. Under some mild assumptions on the potential $W$, and, in particular, without requiring any growth condition on $W$ at infinity, it is proved that (see Theorem \ref{BDS1dim} below for the precise statement) given $\{u_\e\}_\e\subset H^k(I)$ satisfying $\sup_\e E^1_\e(u_\e, I)<+\infty$, there exists a function $u\in BV(I;\{-1,1\})$ such that, upon extracting a subsequence, $u_\e\to u$ in measure as $\e\to0$, having set the two wells of $W$ at $-1$ and $1$. Hence, the $\Gamma$-limit as $\e\to0$ computed with respect to the convergence in measure is finite only on $BV(I;\{-1,1\})$ and it is proved to be equal to $m_k \#( S(u)\cap I)$, where $S(u)$ denotes the set of the discontinuity points of $u$ and $m_k$ is a strictly positive constant determined by the optimal-profile problem
\begin{equation}\label{minBDS}
    m_k:=\inf\Bigl\{ \int_{-\infty}^{+\infty}\Bigl[W(v) + |v^{(k)}|^2\Bigr]\,dt : v\in H^k_{\text{loc}}(\R), \lim_{t\to\pm \infty} v(t)=\pm1 \Bigr\}.
\end{equation}
Note that such result generalizes the one by Fonseca and Mantegazza for $k=2$ to a wider class of double-well potentials.

\noindent The precompactness of a family with equi-bounded energy $\{u_\e\}_\e$ is obtained  by adapting an argument originally devised by Solci \cite{Solci} for perturbations giving free-discontinuity functionals, which relies on local interpolation inequalities on intervals that allow to estimate the $L^2$-norm of an intermediate derivative $u_\e^{(\ell)}, \ell\in\{1,...,k-1\}$, in terms of the integral of $W(u_\e)$, which is related to the $L^2$-norm of $u_\e-1$ or $u_\e+1$ assuming that $W$ is quadratic close to the bottom of the wells, and the $L^2$-norm of $u_\e^{(k)}$.  In this way, small {\em transition intervals} on which $u_\e$ `passes from a well to the other' are detected, with the further condition that, at the endpoints, all the intermediate derivatives are small. These observations lead to considering the minimum problem  
\begin{align*}
    m_k(\eta, T) & :=\inf\Bigl\{ \int_{-T}^{T}\Bigl[W(v) + |v^{(k)}|^2\Bigr]\,dt : v\in H^k((-T,T)), \\
     & \qquad  |v(-T)+1|<\eta, |v(T)-1|<\eta,  |v^{(\ell)}(\pm T)|<\eta \text{ for all } \ell\in\{1,...,k-1\} \Bigr\}
\end{align*}
that, once optimized in $\eta$ and $T$, yields \eqref{minBDS}. Since, in turn, \eqref{minBDS} is proved to be strictly positive, the equi-boundedness of the energy implies that the number of the aforementioned intervals on which a transition occurs is equi-bounded as well, and this property, in broad terms, yields compactness. Having proved that for every $\e$ each transition interval increases the energy $E_\e^1$ by the fixed amount $m_k$, which is independent of $\e$, the computation of the $\Gamma$-limit follows by standard arguments. 

The general $d$-dimensional case deals with the functionals 
\begin{equation}\label{functionalBDS}
    E_\e(u, \Omega):= \begin{cases}  \displaystyle\int_\Omega \Bigl[\frac{1}{\e}W(u) + \e^{2k-1}\|\nabla^{{(k)}}u\|_k^2\Bigr]\,dx & \text{ if } u\in H^k(\Omega) ,\\
    +\infty & \text{ if } u\in  L^2(\Omega)\setminus H^k(\Omega),
\end{cases}
\end{equation}
where $\Omega$ is a bounded, open subset of $\R^d$ with Lipschitz boundary, $W$ and $k$ are as above, and $\|\cdot\|_k$ is the operatorial norm of a symmetric $k$-tensor defined as
\begin{equation}\label{operatorialnorm}
    \|T\|_k:=|\sup\{T(\,\underbrace{\xi,...,\xi}_{k \text{ times}}\,): |\xi|=1\}|.
\end{equation}
In \cite{BDS}, the study of functionals \eqref{functionalBDS} is confined to the computation of the $\Gamma$-limit,  and it is remarked that the  compactness theorem holds true assuming suitable growth conditions at infinity on $W$. The choice of the operatorial norm makes the asymptotic analysis of such functionals a one-dimensional task since slicing arguments are well suited in this case. Indeed, \eqref{operatorialnorm} and the slicing properties of Sobolev functions imply that 
\begin{equation}\label{operatorial}
   \|\nabla^{(k)}v(x)\|_k \geq \Bigl|\frac{d^k v^{\xi,y}}{dt^k}(t)\Bigr| , \qquad \text{ for } \Ld \text{-a.e.\ } x\in\Omega,
\end{equation}
where $\xi$ is a unit vector, $y\in\{z\in\Rd: z\cdot\xi=0\}$, and we set $x=y+t\xi$ and $v^{\xi,y}(t):=v(y+t\xi)$. Employing the blow-up argument of Fonseca and M\"uller \cite{FonsecaMuller} together with \eqref{operatorial} and applying the $\Gamma$-limit result previously obtained in dimension $1$, it is proved that the $\Gamma$-liminf is estimated by
\begin{equation}\label{GammalimitBDSd}
    m_k P(\{u=1\};\Omega),
\end{equation}
where $P(\{u=1\};\Omega)$ is the perimeter of the set $\{u=1\}$ in the domain $\Omega$, representing the measure of the interface in higher dimension.
Finally, the construction of a recovery sequence, that by density can be confined to the case of a smooth interface $\partial\{u=1\}\cap \Omega=\partial E \cap\Omega$ for some $E$ smooth, open set, is obtained by composing the signed distance function from $\partial E$ with an (almost) optimal-profile for the one-dimensional problem \eqref{minBDS} suitably dilated at scale $\e$.  In this way, the energy of such sequence can be computed by slicing in direction of the normal field to $\partial E\cap \Omega$ and, since for every $\e$ each slice increases the energy by the fixed amount $m_k$, \eqref{GammalimitBDSd} is proved to be the $\Gamma$-limit.

In this paper, we study a more general family of functionals in which possibly several singular perturbations, and accordingly several order of derivatives, are involved with no restrictions on the tensor norms. We study the asymptotic behaviour, as the positive parameter $\e$ tends to $0$, of functionals defined as
\begin{equation}\label{functionals}
F_\e(u, \Omega):=  
\begin{cases}
  \displaystyle\int_\Omega \Bigl[\frac{1}{\e}W(u)+\sum_{\ell=1}^{k}q_\ell\e^{2\ell-1}|\nabla^{(\ell)}u|_\ell^2\Bigr]\,dx & \text{ if } u\in H^k(\Omega), \\
  +\infty & \text{ if } u\in L^2(\Omega)\setminus H^k(\Omega),
\end{cases}
\end{equation}
where $\Omega$ is a bounded, open subset of $\R^d$ with $C^1$ boundary, $W$ is a non-negative double-well potential, $k$ is a fixed positive integer larger than $1$, the coefficients $q_1,...,q_{k-1}$ are real constants, possibly negative, $q_k=1$, and $|\cdot|_\ell$ is a norm on the space of symmetric $\ell$-tensors for every $\ell\in\{1,...,k\}$.
More specifically, we suppose that $W:\R\to [0,+\infty)$ is a continuous function satisfying the following assumptions: 
\begin{itemize}
    \item[(H$1$)] $W(s)=0$ if and only if $s\in\{-1,1\}$;
    \item[(H$2$)] $W(s)\geq \min\{(s+1)^2, (s-1)^2 \}$ for every $s\in\R$;
    \item[(H$3$)] $W(s)\leq  W(t)+1$ for every $t\in\R$ and for every $s\in\R$ with $|s|\leq |t|$.
\end{itemize}
We remark that, with some minor adaptations in the computations, we could also assume more in general that, for some positive constants $\alpha,\beta$, the following conditions hold:
\begin{itemize}
    \item[(H$2$')] $W(s)\geq \alpha\min\{(s+1)^2, (s-1)^2 \}$ for every $s\in\R$;
    \item[(H$3$')] $W(s)\leq  \beta W(t)+\beta$ for every $t\in\R$ and for every $s\in\R$ with $|s|\leq |t|$,
\end{itemize}
having our main results still holding unchanged.

One of the difficulties in dealing with general tensor norms is the arising of possible anisotropies, which make slicing techniques inadequate. For this reason, we rely on arguments that are not based on a one-dimensional analysis. But the main issue of our study concerns the presence of negative terms in the energy. To overcome this difficulty we combine the approaches of  Chermisi, Dal Maso, Fonseca, and Leoni in \cite{ChermisiDalMaso11}, and of Cicalese, Spadaro, and Zeppieri in \cite{CSZ}, in which, independently, it is treated the case $k=2$: in particular, in the first work, the analysis is performed also in higher dimension (with the operatorial norms), while the latter gives a detailed description of the one dimensional case. Our first step is proving a global interpolation inequality  at scale $\e$ that establishes bounds on the square of the $L^2$ norm of $\nabla^{(\ell)}u$ in terms of $\int_\Omega W(u)\,dx$ and the square of the $L^2$ norm of $\nabla^{(k)}u$. This inequality also determines lower bounds for the constants $q_1,...,q_{k-1}$ for which our asymptotic analysis can be carried out.
\begin{theorem}\label{nonlinintd} Let $\Omega\subset \R^d$ be a bounded, open set with $C^1$ boundary, let $k>1$ be an integer, and assume that {\rm (H2)} is satisfied. Then, for every $\ell\in\{1,...,k-1\}$, there exists a positive constant $q^*_\ell$ independent of $\Omega$ such that for every $q<q^*_{\ell}$ there exists $\e_0=\e_0(q,\Omega)$ such that it holds
\begin{equation}\notag
    q\e^{2\ell}\int_\Omega \|\nabla^{(\ell)}u\|_\ell^2\,dx \leq \int_\Omega \Bigl[W(u)+ \e^{2k}\|\nabla^{(k)}u\|_k^2\Bigr]\,dx
\end{equation}
for every $\e\in(0,\e_0)$ and $u\in H^k(\Omega)$. Moreover, for every $\ell\in\{1,...,k-1\}$, there exists a positive constant $\widetilde{q}^*_\ell$ independent of $\Omega$ such that for every $q<\widetilde{q}^*_{\ell}$ there exists $\e_0=\e_0(q,\Omega)$ such that it holds 
\begin{equation}\notag
    q\e^{2\ell}\int_\Omega |\nabla^{(\ell)}u|_\ell^2\,dx \leq \int_\Omega \Bigl[W(u)+ \e^{2k}|\nabla^{(k)}u|_k^2\Bigr]\,dx
\end{equation}
 for every $\e\in(0,\e_0)$ and $u\in H^k(\Omega)$. In particular, $\widetilde{q}^*_\ell$ depends on $q^*_\ell, |\cdot|_\ell$, and $|\cdot|_k$.
\end{theorem}
The above interpolation inequality is proved by, first, adapting the arguments in \cite{CSZ}, which yield the result in the case $d=1$, and then by applying the slicing method as in \cite{ChermisiDalMaso11} to recover the general case. We remark that assumption (H$2$), despite not being fully satisfactory in describing the model as it prescribes the growth rate at infinity of the double-well potential, is needed to apply classical, global $L^2$-interpolation estimates. Indeed, the presence of negative terms in the energy seems to prevent the use of the local arguments applied in \cite{BDS}, as it is not evident to us how to estimate the energy when far from the bottom of the wells.

Having at disposal such interpolation estimate allows us to find a priori bounds on the functionals \eqref{functionals}, provided the coefficients $q_1,...,q_{k-1}$ are not too small, and this is the key ingredient to get the main compactness result.

\begin{theorem}\label{compactness} Let $\Omega\subset \Rd$ 
be a bounded, open set with $C^1$ boundary, let $\{F_\e\}_\e$ be defined as in \eqref{functionals} with $k>1$ an integer, let $\mathcal{N}:=\{\ell\in\{1,...,k-1\}: q_\ell \leq 0\}$, and let $\{\e_n\}_n$ be a positive sequence converging to $0$. Assume that {\rm (H1)} and {\rm (H2)} are satisfied and that one of the following conditions holds: 
\begin{itemize}
    \item[\emph{(i)}] $\mathcal{N}=\emptyset$;
    \item [\emph{(ii)}] $\mathcal{N}\neq \emptyset$ and $q_\ell>-\overline{q}_\ell$ for every $\ell\in\mathcal{N}$ for suitable positive constants $\{\overline{q}_\ell:\ell\in\mathcal{N}\}$.
\end{itemize}
If $\{u_n\}_n\subset H^k(\Omega)$ is a sequence such that
\begin{equation*}
    \sup_n F_{\e_n}(u_n, \Omega)<+\infty,
\end{equation*}
then there exist a subsequence $\{u_{n_j}\}_j$ and a function $u\in BV(\Omega;\{-1,1\})$ such that $u_{n_j}\to u$ in $L^2(\Omega)$ as $j\to+\infty$. In particular, if $\mathcal{N}\neq \emptyset$, letting $\widetilde{q}^*_1,...,\widetilde{q}^*_{k-1}$ denote the same positive constants appearing in Theorem \ref{nonlinintd} and given $\{\alpha_\ell : \ell\in \mathcal{N}\}$ such that $\alpha_\ell>0$ for all $\ell\in\mathcal{N}$ and $\sum_{\ell\in\mathcal{N}}\alpha_\ell=1$, it holds that $\overline{q}_\ell=\alpha_\ell \widetilde{q}^*_\ell$ for all $\ell\in\mathcal{N}$; hence, $\{\overline{q}_\ell:\ell\in\mathcal{N}\}$ are independent of $\Omega$.
\end{theorem}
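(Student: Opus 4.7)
The plan is to use the nonlinear interpolation inequality of Theorem~\ref{nonlinintd} to absorb, for $\e_n$ small, the (possibly negative) intermediate-order contributions of $F_{\e_n}(u_n,\Omega)$ into the double-well and top-order terms. This reduces equi-boundedness of $F_{\e_n}(u_n,\Omega)$ to equi-boundedness of the BDS-type energy $E_{\e_n}(u_n,\Omega)$ of \eqref{functionalBDS}, from which compactness then follows by invoking the $d$-dimensional compactness theorem for $E_\e$ referenced in~\cite{BDS}, whose hypothesis of quadratic growth at infinity is ensured here by (H$2$).

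In case (i), no absorption is needed: with every $q_\ell\geq 0$, the equivalence of the finite-dimensional norms $|\cdot|_k$ and $\|\cdot\|_k$ immediately yields $F_\e(u,\Omega)\geq c\,E_\e(u,\Omega)$ for some $c>0$. In case (ii), the key observation is that, for each $\ell\in\mathcal{N}$, the strict inequality $-q_\ell<\alpha_\ell\widetilde{q}^*_\ell$ leaves room to pick an auxiliary constant $q_\ell^*$ with $-q_\ell/\alpha_\ell<q_\ell^*<\widetilde{q}^*_\ell$. Applying \eqref{interpolation2} to $u_n$ with $q=q_\ell^*$, which is legitimate once $\e_n$ is below the minimum of the finitely many thresholds $\e_0(q_\ell^*,\Omega)$, and rescaling one obtains
\begin{equation*}
-q_\ell\, \e_n^{2\ell-1}\int_\Omega |\nabla^{(\ell)} u_n|_\ell^2\,dx \leq t_\ell\,\alpha_\ell \Bigl(\frac{1}{\e_n}\int_\Omega W(u_n)\,dx + \e_n^{2k-1}\int_\Omega |\nabla^{(k)} u_n|_k^2\,dx\Bigr),
\end{equation*}
where $t_\ell:=(-q_\ell/\alpha_\ell)/q_\ell^*\in(0,1)$. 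Summing over $\mathcal{N}$, dropping the non-negative intermediate terms with $\ell\in\{1,\dots,k-1\}\setminus\mathcal{N}$, and using that $T:=\sum_{\ell\in\mathcal{N}}t_\ell\alpha_\ell<\sum_{\ell\in\mathcal{N}}\alpha_\ell=1$, one is left with
\begin{equation*}
F_{\e_n}(u_n,\Omega)\geq (1-T)\Bigl(\frac{1}{\e_n}\int_\Omega W(u_n)\,dx + \e_n^{2k-1}\int_\Omega |\nabla^{(k)}u_n|_k^2\,dx\Bigr),
\end{equation*}
which by norm equivalence bounds from below a positive multiple of $E_{\e_n}(u_n,\Omega)$.

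With $\sup_n E_{\e_n}(u_n,\Omega)<+\infty$ secured in both cases, the compactness theorem for $E_\e$ supplies a subsequence $u_{n_j}\to u$ in measure, with $u\in BV(\Omega;\{-1,1\})$. The upgrade to $L^2(\Omega)$ then follows from $\int_\Omega W(u_n)\,dx\to 0$: by (H$2$), $W(s)\geq (|s|-1)^2$, which forces equi-integrability of $u_n^2$, and a Vitali-type argument concludes. The main obstacle I anticipate is the bookkeeping of the absorption: the strict slack in the choice of each $q_\ell^*$ is what keeps $T<1$, and the weights $\{\alpha_\ell\}$ encode how the budget provided by the positive $W$- and top-order terms is distributed among the indices in $\mathcal{N}$, so that the hypothesis can be taken pointwise ($q_\ell>-\alpha_\ell\widetilde{q}^*_\ell$ for each $\ell$) rather than as a cumulative condition on $\mathcal{N}$.
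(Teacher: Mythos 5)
Your proposal takes essentially the same approach as the paper: use the interpolation inequality of Theorem~\ref{nonlinintd} to absorb the negative intermediate-order terms (the paper packages this step as Corollary~\ref{interpolationwithnorms}, but your inline computation with the auxiliary constants $q_\ell^*$ and weights $\alpha_\ell$ produces exactly the same lower bound, with the same explanation of why $\overline{q}_\ell=\alpha_\ell\widetilde q^*_\ell$), then reduce to equi-boundedness of $E_{\e_n}(u_n,\Omega)$. The one point to be careful about is that the $d$-dimensional compactness for $E_\e$ is not a ready-made citation from \cite{BDS}: that paper only proves the one-dimensional compactness and remarks on the higher-dimensional case, so the present paper has to establish it (Proposition~\ref{compactnessBDSanydim}), first upgrading the 1D result to $L^2$-precompactness via (H2) (Corollary~\ref{BDS1dimcorollary}) and then lifting to higher dimensions through the slicing precompactness criterion of~\cite{ABS}; your Vitali/equi-integrability step is the $L^2$-upgrade in that chain, but the intermediate ``convergence in measure'' you quote has to come from the slicing argument rather than directly from \cite{BDS}.
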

Clearly, if the coefficients $q_1,...,q_{k-1}$ are non-negative, the compactness result in \cite{BDS} suggests there is no need to resort to Theorem \ref{nonlinintd} since, by the equivalence of the tensor norms, compactness of the functionals $\{F_\e\}_\e$ can be inferred by compactness of the functionals $\{E_\e\}_\e$ defined in \eqref{functionalBDS} through a direct comparison. For this reason, we also prove a compactness result for functionals $\{E_\e\}_\e$ (Proposition \ref{compactnessBDSanydim}). The first step is enhancing the compactness result for functionals \eqref{functionalBDS1d} stated in \cite{BDS} taking advantage of assumption (H$2$). Indeed, this assumption implies that $W$ has quadratic growth at infinity, and this condition, in turn, allows to improve precompactness in measure into precompactness in $L^2$. Then, we recover the general $d$-dimensional case by resorting to a precompactness criterion in $L^1$ by slicing that has been introduced in \cite{ABS} (see Proposition \ref{slicingcompactness} below), which in our case is sufficient to obtain precompactness in $L^2$. 

Afterwards, we compute the $\Gamma$-limit of the family $\{F_\e\}_\e$ with respect to the strong convergence in $L^2(\Omega)$. In order to state our result, we introduce some notation.
We set $\mathbb{S}^{d-1}:=\{\xi\in\R^d : |\xi|=1\}$. For fixed $\nu\in \Sd$ and $T>0$, we let the symbol $Q_T^\nu$ denote an open cube of side length $T$, centred at the origin, with two faces orthogonal to $\nu$,
\begin{equation}\label{Q^nu}
    Q^\nu_T:=\Bigl\{x\in \R^d : |x\cdot \nu|<\frac{T}{2},\,|x\cdot \nu_i|<\frac{T}{2} \text{ for every } i\in\{1,...,d-1\}\Bigr\},
\end{equation}
where $\{\nu_1,...,\nu_{d-1},\nu\}$ is an orthonormal basis of $\Rd$ that we fix arbitrarily. 

\noindent We fix a function $\overline{u}\in H_{\text{loc}}^k(\R)$ such that $\overline{u}(t)=-1$ if $t\leq -1/8$, $\overline{u}(t)=1$ if $t\geq 1/8$, $|\overline{u}|\leq 1$, and
\begin{equation*}
    0<\int_{-\infty}^{+\infty} \Bigl[W(\overline{u})+\sum_{\ell=1}^k|q_\ell||\overline{u}^{(\ell)}|^2\Bigr]\,dt<+\infty,
\end{equation*}
then, for fixed $\nu\in \Sd$ we define the function
\begin{equation*}
    \overline{u}^\nu(x):=\overline{u}(x\cdot \nu) \quad \text{ for all } x\in\R^d,
\end{equation*}
and for every $\e>0$ we further define 
\begin{equation*}
\overline{u}^\nu_\e(x):=\overline{u}^\nu\bigl(\frac{x}{\e}\bigr)=\overline{u}\bigl(\frac{x\cdot\nu}{\e}\bigr)  \quad \text{ for all } x\in\R^d.  
\end{equation*}
We introduce a class of admissible functions on unit cubes as
\begin{equation}\label{A_nu}
\A_\e^\nu:=\bigl\{v\in H^k(Q_1^\nu): v= \overline{u}^\nu_\e \text{ near } \partial Q_1^\nu \bigr\},
\end{equation}
where by `near $\partial Q_1^\nu$', we mean `in $Q_1^\nu \setminus Q_r^\nu$ for some $r\in(0,1)$'.

\noindent Finally, for every $\nu\in \Sd$ we introduce the function
 \begin{equation}\label{density}
       g(\nu):=\inf\Big\{\int_{{Q_1^\nu}} \Bigl[\frac{1}{\e}W(v)+\sum_{\ell=1}^{k}q_\ell\e^{2\ell-1} |\nabla^{(\ell)}v|_\ell^2\Bigr]\,dx  :  v \in \A^\nu_\e,\e\in(0,1)\Big\},
 \end{equation}
and we state our last main result.

 \begin{theorem}\label{main theorem}  Let $\Omega\subset \Rd$ 
be a bounded, open set with $C^1$ boundary, let $\{F_\e\}_\e$ be defined as in \eqref{functionals} with $k>1$ an integer, and let $\mathcal{N}:=\{\ell\in\{1,...,k-1\}: q_\ell \leq 0\}$. Assume that {\rm (H1)-(H3)} are satisfied and that one of the following conditions holds: 
\begin{itemize}
    \item[\emph{(i)}] $\mathcal{N}=\emptyset$;
    \item [\emph{(ii)}] $\mathcal{N}\neq \emptyset$ and $q_\ell>-\overline{q}_\ell$ for every $\ell\in\mathcal{N}$ with the same positive constants $\{\overline{q}_\ell:\ell\in\mathcal{N}\}$ independent of $\Omega$ appearing in Theorem \ref{compactness}.
\end{itemize} Then the family of functionals $\{F_\e\}_\e$ $\Gamma$-converges, with respect to the strong convergence in $L^2(\Omega)$, to the functional $F_0$ given by 
 \begin{equation}\notag
    F_0(u, \Omega):=
    \begin{cases}
\displaystyle\int_{\partial^*\!A\cap \Omega} g(\nu_A)\, d\Hd & \text{ if } u\in BV(\Omega;\{-1,1\}),\, \text{ with } u=2\chi_A-1, \\
  +\infty & \text{ if } u\in L^2(\Omega)\setminus BV(\Omega;\{-1,1\}),
  \end{cases}
    \end{equation}
 where $A$ is a set of finite perimeter in $\Omega$, $\partial^*A$ is its reduced boundary, $\nu_A$ is its inner unit normal, and $g$ is defined as in \eqref{density}.
 \end{theorem}

 In the special case $q_1=...=q_{k-1}=0$, this result improves the corresponding one in \cite{BDS} as it deals with arbitrary norms on the spaces of tensors, upon assuming stronger hypotheses on the double-well potential.

In order to compute the $\Gamma$-limit, we follow the strategy presented in \cite{ChermisiDalMaso11}. The lower bound is proved by using the blow-up argument and relying on a method due to De Giorgi that allows to vary boundary conditions on converging sequences (Lemma \ref{lemma:modifying sequences}). The construction of a recovery sequence is confined to the case of polyhedra, which are seen to be dense in energy for the limit $F_0$, being the surface energy density $g$ upper semicontinuous (Proposition \ref{densitywelldef}).

\noindent  As stated below in Remark \ref{re.inftolim}, the density \eqref{density} can be described by the following asymptotic formula
\begin{align*}
    g(\nu)& = \lim_{\e\to0}\,\inf\Big\{\int_{{Q_1^\nu}} \Bigl[\frac{1}{\e}W(v)+\sum_{\ell=1}^{k}q_\ell\e^{2\ell-1} |\nabla^{(\ell)}v|_\ell^2\Bigr]\,dx  :  v \in \A^\nu_\e\Big\} \\
    & = \lim_{T\to+\infty} \frac{1}{T^{d-1}} \,\inf\Big\{\int_{{Q_T^\nu}} \Bigl[W(v)+\sum_{\ell=1}^{k}q_\ell |\nabla^{(\ell)}v|_\ell^2\Bigr]\,dx  :  v \in \widetilde{\A}^\nu_T\Big\},
\end{align*}
where 
\begin{equation*}
\widetilde{\A}_T^\nu:=\bigl\{v\in H^k(Q_T^\nu): v= \overline{u}^\nu \text{ near } \partial Q_T^\nu \bigr\},
\end{equation*}
and the last equality follows by setting $T:=\frac{1}{\e}$ and a change of variables.

\noindent The formula appearing in \eqref{density} should be compared with the one in \cite{ChermisiDalMaso11}. In that work such density is a constant because of the absence of anisotropies, and it is given by 
    \begin{equation*}
       \inf\Big\{\int_{{Q}} \Bigl[\frac{1}{\e}W(v)-q\e \|\nabla v\|_1^2+\e^3\|\nabla^2v\|_2\Bigr]\,dx  :  v \in \A,\e\in(0,1)\Big\},
 \end{equation*}
with $Q=(-\frac{1}{2}, \frac{1}{2})^d$, and
\begin{align}\notag
\A:=\bigl\{v\in H^k_{\text{loc}}(\R^d) :\,\, & v=-1 \text{ near } x\cdot e_d=-\frac{1}{2}, v=1 \text{ near } x\cdot e_d =\frac{1}{2}, \\ \notag
& v(x+e_i)=v(x) \text{ for all } x\in \R^d, i\in\{1,...,d-1\}\bigr\},
\end{align}
where $\{e_1,...,e_d\}$ is the canonical basis of $\Rd$ and by `near $x\cdot e_d =\pm\frac{1}{2}$' it is meant `in a neighborhood of the set $\{ x\in\R^d: x\cdot e_d=\pm \frac{1}{2} \}$'.

\noindent In light of this  observation, we expect that our density $g$ could be characterized as
\begin{equation*}
       \inf\Big\{\int_{{Q_1^\nu}} \Bigl[\frac{1}{\e}W(v)+\sum_{\ell=1}^{k}q_\ell\e^{2\ell-1} |\nabla^{(\ell)}v|_\ell^2\Bigr]\,dx  :  v \in \widetilde{\A}^\nu,\e\in(0,1)\Big\},
 \end{equation*}
with
\begin{align}\notag
\widetilde{\A}^\nu:=\bigl\{v\in H^k_{\text{loc}}(\R^d) :\,\, & v=-1 \text{ near } x\cdot \nu =-\frac{1}{2}, v=1 \text{ near } x\cdot \nu =\frac{1}{2}, \\ \notag
& v(x+\nu_i)=v(x) \text{ for all } x\in \R^d, i\in\{1,...,d-1\}\bigr\},
\end{align}
where $\{\nu_1,...,\nu_{d-1}, \nu\}$ is the orthonormal basis that we fixed in the definitions \eqref{Q^nu} and \eqref{A_nu}.
To prove this  fact, we would need to impose boundary conditions resorting to Lemma \ref{lemma:modifying sequences}. In turn, to do this we would need to improve Theorem \ref{nonlinintd}, even just in the case of the operatorial norms, to the case $\Omega$ be an open cube. In \cite{ChermisiDalMaso11}, this  result is obtained by slicing, which is well suited because, roughly speaking, the operatorial norm of a $1$-tensor, the gradient, coincides with its euclidean norm as a vector of $\R^d$. Because of the higher order of derivatives that our interpolation estimates involve, such argument seems to be difficult to adapt. As an additional consequence of the lack of interpolation estimates on cubes, the proofs of Lemma \ref{lemma:modifying sequences} (and its statement) and of the lower bound (Proposition \ref{prop:liminf}) present several differences with respect to their counterparts in \cite{ChermisiDalMaso11}. In particular, we will need to apply our interpolation inequalities on a $C^{1}$ subset of the cube $Q^\nu_1$ that coincides with this cube inside the strip $\{x\in \Rd:|x\cdot \nu|\leq \frac{1}{4}\}$ which is close to the interface on which most of the energy is stored.

We remark that, in the case $k=2$, such asymptotic analysis has already been performed for more general functionals that are bounded from above and from below by positive multiples of the functionals \eqref{functionals}, allowing, for instance, the dependence of the energy densities on the variable $x$. In particular, in \cite{BaiaCher}, Ba\'ia, Barroso, Chermisi, and Matias studied the case $q_1=1$; while, in the recent paper \cite{Donnarumma}, Donnarumma has addressed the problem of deterministic and stochastic homogenization of such functionals. As a consequence of their results, the validity of Theorem \ref{main theorem} for generic norms follows. For a treating of phase transitions models encoding homogenization phenomena we also refer to \cite{AnsBraChia1} and \cite{AnsBraChia2}.  

In the final section, we further discuss the cell problem in \eqref{density} in the one-dimensional case, and we also address the case in which all the norms $|\cdot|_\ell$ appearing in \eqref{functionals} are `compatible with slicing'; that is, satisfying an estimate like \eqref{operatorial}, and all the coefficients $q_\ell,\,\ell\in\{1,...,k-1\}$ are non-negative. In both cases, the function $g$ in \eqref{density} is constant, and its value is determined by the one-dimensional optimal-profile problem 
\begin{equation*}
    m:=\inf \Bigl\{ \int_{-\infty}^{+\infty}\Bigl[W(u)+\sum_{\ell=1}^kq_\ell|u^{(\ell)}|^2\Bigr]\,dt: u\in H^k_{\text{loc}}(\R), \lim_{t\to\pm\infty} u(t)=\pm1 \Bigr\}.
\end{equation*}
By a cut-off argument, we are also able to prove, without requiring (H$3$), that the above infimum actually is a minimum, determining the existence of optime-profiles for a wide class of phase-transitions problems on the real line.
 
We conclude this introductory section by noting that in recent years  there has also been a growing interest in non-local counterparts of the models for phase transitions described above. Starting from the foundational work of Savin and Valdinoci \cite{Savin}, several works on the subject have been published. We  mention here the works \cite{DalFonLeoSecondOrder,PalatucciVincini,Picerni} and the recent paper by Solci \cite{SolciNonLocal}, in which a non-local counterpart of the results of \cite{BDS} is proved. It would be interesting to extend our analysis to that context.

\smallskip

 Near the completion of our work, we learnt that, independently, Brazke, G\"otzmann, and Kn\"upfer performed the same asymptotic analysis in the one-dimensional case without requiring (H$3$) and with the further assumption that $q_1=...=q_{k-2}=0$; that is, taking into account a possibly negative singular perturbation described by the only derivative of order $k-1$ (see \cite{Brazke}) using an argument different from ours for the liminf inequality.

\section{Notation and preliminaries}
In this section we collect the main notation and some preliminary results using \cite{AmbrFuscoPallara} as main reference. 

\smallskip

\textbf{Notation.} In what follows, we let $\Omega$ be a bounded, open subset of $\R^d$ with Lipschitz or $C^1$ boundary, for $d\geq1$. We let $\lfloor t\rfloor$ and $\lceil t\rceil$ denote the lower and upper integer part of a real number $t$, respectively. We let $\{e_1,...,e_d\}$ denote the canonical basis of $\R^d$ and $\I$ the identity matrix in $\R^{d\times d}$. Given any $x\in \R^d$, we let $|x|$ denote its euclidean norm. We let $\mathcal{L}^d$ denote the Lebesgue measure on $\R^d$ and $\mathcal{H}^{d-1}$ denote the $(d-1)$-dimensional Hausdorff measure on $\R^d$. Given a Borel measure $\mu$ on $\Rd$ and a Borel subset $B$ of $\Rd$, we let $\mu\mres B$ denote the restriction of $\mu$ to $B$, that is, the measure defined by $\mu\mres B (A):=\mu(A\cap B)$. For every positive integer $\ell$, we let $\|\cdot \|_\ell$ denote the operatorial norm of a symmetric $\ell$-tensor on $\Rd$ defined by \eqref{operatorialnorm}, while a generic norm will be denoted by $|\cdot|_\ell$. Given $\nu\in \Sd$, we let the symbol $\nu^\perp$ denote the hyperplane $\{x\in\Rd:x\cdot\nu=0\}$, and given $T>0$ we let $Q_T^\nu(z)$ denote the open cube of side length $T$, centred at $z$, and with two faces orthogonal to $\nu$,
\begin{equation*}\label{oriented cube}
    Q^\nu_T(z):=\Bigl\{x\in \R^d : |(z-x)\cdot \nu|<\frac{T}{2},\,|(z-x)\cdot \nu_i|<\frac{T}{2} \text{ for every } i\in\{1,...,d-1\}\Bigr\},
\end{equation*}
where $\{\nu_1,...,\nu_{d-1}, \nu\}$ is a fixed orthonormal basis of $\R^d$. Given $r>0$, let $B_r(z)$ denote the $d$-dimensional open ball of centre $z$ and radius $r$. We adopt the convention that, both for cubes and balls, when omitted, the centre is in the origin in accordance with \eqref{Q^nu}. 

\smallskip

\textbf{Functions of bounded variation.} Given $u\in L^1(\Omega)$, the variation of $u$ in $\Omega$ is defined as
\[
V(u; \Omega):=\sup\Bigl\{\int_\Omega u\text{ div}  \varphi\,dx : \varphi \in C^1_c(\Omega; \R^d),\, |\varphi|\leq 1 \Bigr\}.
\]
We say that $u$ has bounded variation, and we write $u\in BV(\Omega)$, if $V(u;\Omega)<+\infty$. For such a function $u$, the singular set $S(u)$ is defined as the complement in $\Omega$ of the set of Lebesgue points; i.e., 
\begin{equation*}
S(u):=\Omega\setminus\Bigl\{x\in\Omega : \exists z\in \R^N \text{ such that } \lim_{r\to0^+}\frac{1}{\Ld(B_r(x))}\int_{B_r(x)}|u(y)-z|\,dy =0 \Bigr\}.
\end{equation*}
The distributional derivative of $u$ in $\Omega$ is the sum of three mutually singular vector measures, $Du= \mu^a+\mu^j+\mu^c$, where $\mu^a$ is absolutely continuous with respect to $\Ld$, $\mu^j$, the jump part, is a 
$(d-1)$-dimensional measure concentrated on $S(u)$ with density given by $(u^+-u^-)\nu_u$, where $u^-, u^+$ are the bilateral traces of $u$ on $S(u)$ defined in accordance with the unit vector field $\nu_u$, while $\mu^c$ is the so-called Cantor part.  

\smallskip

\textbf{Sets of finite perimeter.} Given an $\Ld$-measurable set $A \subseteq \Omega$, the perimeter of $A$ in $\Omega$ is defined as
\[
P(A; \Omega):=\sup\Bigl\{\int_A \text{div}  \varphi\,dx : \varphi \in C^1_c(\Omega; \R^d),\, |\varphi|\leq 1 \Bigr\}.
\]
We say that $A$ has finite perimeter in $\Omega$ if $P(A;\Omega)<+\infty$. Since $\Omega$ is assumed to be bounded, the set $A$ has finite perimeter in $\Omega$ if and only if its characterstic function $\chi_A$ is a function of bounded variation in $\Omega$; i.e., $\chi_A\in BV(\Omega)$. Therefore, for such a set $A$, the distributional derivative of $\chi_A$ on $\Omega$ is a vector measure $\mu_A$. Denoting by $|\mu_A|$ the total variation of this measure, the reduced boundary of $A$ is defined as
\begin{equation*}\label{reduced}
    \partial^*A:=\Bigl\{x\in \R^d : \text{ there exists } \nu_A(x):=\lim_{r\to0}\frac{\mu_A(B_r(x))}{|\mu_A|(B_r(x))}, \text{ with } |\nu_A(x)|=1\Bigr\},
\end{equation*}
and the unit vector $\nu_A(x)$ stands for the inner unit normal (in the sense of measure theory) to $A$ at the point $x$.

We summarize the main properties we need about sets of finite perimeter in the following statements.
\begin{theorem}\label{thm:structurethm} If $A$ is a set of finite perimeter in $\Omega\subset \R^d$, the distributional derivative of $\chi_A$ in $\Omega$ is
\begin{equation}\notag
    \mu_A  = \nu_A \mathcal{H}^{d-1} \mres (\partial^*A \cap\Omega), 
\end{equation}
    and $P(A;\Omega)=|\mu_A|(\R^d)=\mathcal{H}^{d-1}(\partial^* A\cap \Omega)$. Moreover, setting $H_{\nu}:=\{y\in \R^d : y\cdot\nu > 0\}$, for every point $x\in\partial^* A\cap \Omega$, it holds that
\begin{align}\notag
    & \chi_{\frac{A-x}{r}} \to  \chi_{H_{\nu_A(x)}} \quad \text{ in } L^1_{\text{loc}}(\R^d) & \quad \text{ as }r\to0, \\\notag
    & \frac{\mathcal{H}^{d-1}(\partial^* A\cap Q^{\nu_A(x)}_r(x))}{r^{d-1}}\to 1  & \quad \text{ as }r\to0.
\end{align}
\end{theorem}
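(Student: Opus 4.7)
This is De~Giorgi's classical structure theorem for sets of finite perimeter; my plan is to derive all three assertions from a single blow-up analysis at points of the reduced boundary. The backbone consists of three steps: density lower and upper bounds for $|\mu_A|$ at reduced boundary points; passage to the limit in the rescaled problem to identify the blow-up as a half-space; and recovery of the measure representation by differentiation of Radon measures.

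As a preliminary, I would establish that there exist constants $0 < c < C$, depending only on $d$, such that for every $x \in \partial^* A$ and every sufficiently small $r > 0$,
\begin{equation*}
c\, r^{d-1} \leq |\mu_A|(B_r(x)) \leq C\, r^{d-1}, \qquad c\, r^d \leq \Ld(A \cap B_r(x)) \leq (1-c)\, r^d.
\end{equation*}
The upper perimeter bound follows from the $BV$-coarea formula applied to $y \mapsto |y-x|$, while the lower bound comes from the relative isoperimetric inequality once one knows that $A$ occupies a definite fraction of $B_r(x)$ in both directions, a consequence of the nonvanishing of $\nu_A(x)$.

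For the blow-up, fix $x \in \partial^* A$ and set $A_r := (A-x)/r$. The scaling $P(A_r; B_1) = P(A; B_r(x))/r^{d-1}$ together with the upper density estimate shows that $\{\chi_{A_r}\}$ is bounded in $BV_{\text{loc}}(\Rd)$; extracting a subsequence yields an $L^1_{\text{loc}}$-limit $\chi_{A_\infty}$. The defining property $\mu_A(B_r(x))/|\mu_A|(B_r(x)) \to \nu_A(x)$, combined with weak-$*$ convergence of the rescaled distributional derivatives and lower semicontinuity of the total variation, passes to the limit and forces $\mu_{A_\infty} = \nu_A(x)\, |\mu_{A_\infty}|$, so the inner normal to $A_\infty$ is almost everywhere constant and equal to $\nu_A(x)$. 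A rigidity argument (tangential invariance by testing against divergence-free fields in $\nu_A(x)^\perp$, together with the lower density bound that excludes empty or full limits) then gives $A_\infty = H_{\nu_A(x)}$. Since the limit is uniquely determined, the whole family converges, which proves the first convergence in the statement.

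From the blow-up I deduce $|\mu_A|(B_r(x))/(\omega_{d-1} r^{d-1}) \to 1$ as $r \to 0$ for every $x \in \partial^* A$. Combined with the fact that $|\mu_A|$ vanishes outside the essential boundary and the Besicovitch differentiation theorem for Radon measures, this yields $|\mu_A| = \Hd \mres \partial^* A$ in $\Omega$; since $\mu_A = \nu_A\, |\mu_A|$ on $\partial^* A$ by the very definition of $\nu_A$, the representation $\mu_A = \nu_A\, \Hd \mres (\partial^* A \cap \Omega)$ follows, and consequently $P(A;\Omega) = \Hd(\partial^* A \cap \Omega)$. The cube density formula is then obtained from $A_r \to H_{\nu_A(x)}$ in $L^1_{\text{loc}}$ by noting that the reduced boundary of a half-space intersected with the unit oriented cube has $\Hd$-measure exactly one. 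I expect the main obstacle to be the rigidity step identifying the blow-up limit as a half-space: this is where the Lebesgue-value interpretation of $\nu_A(x)$ is crucial, and the density lower bounds are essential to rule out the degenerate limits $\emptyset$ and $\Rd$.
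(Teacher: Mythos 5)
The paper does not prove this statement at all: Theorem~2.1 appears in the ``Notation and preliminaries'' section as a recollection of standard facts about sets of finite perimeter, with the monograph of Ambrosio, Fusco and Pallara cited at the head of the section as the reference. There is therefore no paper proof to compare your argument against.

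That said, your sketch is a recognizable and essentially correct outline of De~Giorgi's original blow-up proof of the structure theorem, covering the right main steps: density estimates via coarea and the relative isoperimetric inequality, compactness of the rescalings in $BV_{\mathrm{loc}}$, weak-$*$ passage to the limit using the Lebesgue-point property of $\nu_A$ to obtain a limit set with constant inner normal, rigidity to identify it as a half-space, and Besicovitch differentiation to deduce $|\mu_A| = \Hd\mres\partial^*A$. Two points would need care in a full write-up. First, ``$|\mu_A|$ vanishes outside the essential boundary'' is not the statement you actually want: what the Besicovitch differentiation theorem gives directly is $|\mu_A|(\Omega\setminus\partial^*A)=0$, and the comparison with $\Hd$ then proceeds via the density ratio $|\mu_A|(B_r(x))/(\omega_{d-1}r^{d-1})\to 1$ on $\partial^*A$ together with upper and lower density comparison theorems for Radon measures against Hausdorff measure. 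Second, the convergence of $|\mu_{A_r}|$ to $|\mu_{A_\infty}|$ (as opposed to just $\geq$ by lower semicontinuity) requires showing there is no mass loss, which again relies on the Lebesgue-point condition $\mu_A(B_r(x))/|\mu_A|(B_r(x))\to\nu_A(x)$ forcing $|\mu_{A_r}|(B_1)\to|\mu_{A_\infty}|(B_1)$. Your sketch gestures at both, so no genuine gap, but these are the places where a blind expansion of the sketch is most likely to go wrong. For the purposes of this paper, though, none of this is needed — the result is simply imported as textbook background.
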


\noindent We remark that a function $u\in BV(\Omega;\{-1,1\})$ is of the form $u=2\chi_A-1$ for a certain $A$ set of finite perimeter in $\Omega$. In particular, it holds that $S(u)=\partial^*A\cap \Omega$ up to $\Hd$-negligible sets and the distributional derivative of $u$ in $\Omega$ is the measure $2\nu_A\Hd\mres (\partial^*A\cap \Omega)$. 

We shall also recall a well-known result of approximation of sets of finite perimeter via polyhedral sets, see \cite[Lemma 3.6]{AcerbiBouchitte} and \cite[Lemma 3.1]{Baldo}. 

\begin{theorem}\label{smoothapproximation}
   Let $A$ be a set of finite perimeter in $\Omega\subset \R^d,\, d\geq 2$. There exists a sequence $\{A_n\}_{n\in\N}$ of polyhedral sets satisfying
    \begin{enumerate} [(i)]
    \item \label{approx1}
       $\chi_{A_n} \to \chi_A  \text{ in  } L^1(\Omega) \quad  \text{ as } n \to \infty,$
        \item 
        \label{approx2}
      $  P(A_n;\Omega)\to P(A;\Omega) \quad
      \text{ as } n\to+\infty, $
      \item $\Hd(\partial^*A_n\cap\partial\Omega)=0$ for every $n\in\N$.
    \end{enumerate}
Moreover, for every non-negative continous function $g$ on $\Sd$, there holds 
\begin{align}\label{contlimit}
    \lim_{n\to + \infty} \int_{\partial^*\! A_n \cap \Omega} g(\nu_{A_n}(x)) d\mathcal{H}^{d-1}(x)=\int_{\partial ^* \!A \cap \Omega} g(\nu_{A}(x)) d\mathcal{H}^{d-1}(x).
\end{align}
\end{theorem}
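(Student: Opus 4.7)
The plan is to combine a smooth approximation step with a piecewise-linear approximation step, and then deduce the integral convergence \eqref{contlimit} from Reshetnyak's continuity theorem, exploiting that (i) and (ii) already encode both weak-$*$ convergence of the vector measures $\mu_{A_n}$ and convergence of their total variations.

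First, I would obtain a smooth approximation: letting $\rho_\delta$ be a standard mollifier and $u_\delta:=\rho_\delta * \chi_A$ (extending $\chi_A$ by $0$ outside $\Omega$, or using a slightly larger open set $\Omega'\Supset\Omega$ with $A$ of finite perimeter in $\Omega'$, which one can arrange by a small enlargement of the ambient set since only the restriction to $\Omega$ matters), then $u_\delta \to \chi_A$ in $L^1(\Omega)$. By the coarea formula, $\int_0^1 P(\{u_\delta>t\};\Omega)\,dt = \int_\Omega |\nabla u_\delta|\,dx \le |\mu_A|(\Omega')$, and the right-hand side tends to $P(A;\Omega)$ as $\Omega'\downarrow\Omega$ and then $\delta\to 0$. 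By Sard's theorem, for $\mathcal{L}^1$-a.e.\ $t\in(0,1)$ the superlevel set $E_{\delta,t}:=\{u_\delta>t\}$ has $C^\infty$ boundary; combining a diagonal argument with the lower semicontinuity of the perimeter and Fatou's lemma, one selects $\delta_n\to 0$ and $t_n\in(0,1)$ such that the smooth sets $E_n:=E_{\delta_n,t_n}$ satisfy $\chi_{E_n}\to \chi_A$ in $L^1(\Omega)$ and $P(E_n;\Omega)\to P(A;\Omega)$.

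Next, I would approximate each smooth set $E_n$ by a polyhedral set. Since $\partial E_n$ is a compact $C^1$-hypersurface, cover it by finitely many balls in each of which, in suitable local coordinates, $\partial E_n$ is the graph of a $C^1$-function; approximate each such graph uniformly, together with its area, by the graph of a piecewise linear function obtained by triangulating the domain finely. Gluing these local constructions via a partition of unity (and standard polyhedral patching on overlaps), one produces a polyhedral set $P_n$ with $\chi_{P_n}\to\chi_{E_n}$ in $L^1(\Omega)$ and $P(P_n;\Omega)\to P(E_n;\Omega)$; the continuity of the Hausdorff measure under such piecewise-linear approximations of $C^1$-graphs is the main technical point here. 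A diagonal extraction then yields a polyhedral sequence $\{A_n\}$ satisfying (i) and (ii). To enforce (iii), observe that a polyhedral set has only finitely many $(d-1)$-dimensional faces, each lying in an affine hyperplane; since $\partial\Omega$ is $C^1$, it contains no open piece of any affine hyperplane, so $\mathcal{H}^{d-1}(H\cap\partial\Omega)=0$ for every affine hyperplane $H$ intersecting $\partial\Omega$ transversally, and by an arbitrarily small translation/rotation of each polyhedron we can avoid the exceptional (non-transversal) orientations while preserving the convergences in (i) and (ii).

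Finally, for the continuous-function statement \eqref{contlimit}, I would invoke Reshetnyak's continuity theorem for $\mathbb{R}^d$-valued measures. Properties (i) and (ii), together with Theorem \ref{thm:structurethm}, yield
\begin{equation*}
\mu_{A_n}=\nu_{A_n}\mathcal{H}^{d-1}\mres(\partial^*\!A_n\cap\Omega)\rightharpoonup^* \mu_A=\nu_A\mathcal{H}^{d-1}\mres(\partial^*\!A\cap\Omega)
\end{equation*}
as $\mathbb{R}^d$-valued Radon measures on $\Omega$, and in addition $|\mu_{A_n}|(\Omega)=P(A_n;\Omega)\to P(A;\Omega)=|\mu_A|(\Omega)$. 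Hence by Reshetnyak's continuity theorem, for every continuous and positively $1$-homogeneous function $f\colon\mathbb{R}^d\to[0,+\infty)$ one has $\int_\Omega f\!\left(\tfrac{d\mu_{A_n}}{d|\mu_{A_n}|}\right) d|\mu_{A_n}| \to \int_\Omega f\!\left(\tfrac{d\mu_A}{d|\mu_A|}\right) d|\mu_A|$. Applying this with $f(v):=|v|\,g(v/|v|)$ (extended by $0$ at the origin), and using that the Radon-Nikodym derivatives are precisely $\nu_{A_n}$ and $\nu_A$, gives exactly \eqref{contlimit}. The main obstacle is ensuring that, through both approximation steps and the boundary perturbation needed for (iii), the perimeter convergence in (ii) is preserved without degrading the $L^1$ convergence in (i); once (i) and (ii) are secured simultaneously, the integral statement follows essentially for free from Reshetnyak.
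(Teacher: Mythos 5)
The paper does not prove this statement at all: it is quoted verbatim from the literature (\cite[Lemma 3.6]{AcerbiBouchitte}, \cite[Lemma 3.1]{Baldo}), so there is no internal proof to compare against. Your overall strategy — mollification plus the coarea formula and Sard's theorem to get smooth approximants, piecewise-linear approximation of the smooth hypersurfaces, and then Reshetnyak's continuity theorem to upgrade (i)--(ii) to \eqref{contlimit} — is precisely the standard route behind those references, and your final Reshetnyak step is correct as written: $L^1$ convergence plus equiboundedness of the perimeters gives $\mu_{A_n}\rightharpoonup^*\mu_A$ in $\Omega$, and together with $|\mu_{A_n}|(\Omega)\to|\mu_A|(\Omega)$ this yields \eqref{contlimit} for $f(v)=|v|\,g(v/|v|)$.

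There is, however, a genuine gap in your first step, and it sits exactly where the cited lemmas do their real work: the perimeter in the statement is the \emph{relative} perimeter $P(\cdot\,;\Omega)$, and your treatment of $\partial\Omega$ does not close. Extending $\chi_A$ by $0$ outside $\Omega$ fails outright: take $A=\Omega$, so that $P(A;\Omega)=0$; the mollified level sets $\{u_\delta>t\}$ are slight contractions of $\Omega$ whose boundaries lie \emph{inside} $\Omega$ and carry perimeter close to $\Hd(\partial\Omega)>0$, so (ii) is violated. Your alternative — extending $A$ to a set of finite perimeter in a larger $\Omega'$ — can work, but only if the extension $\widetilde A$ is chosen so that $|D\chi_{\widetilde A}|(\partial\Omega)=0$ (e.g.\ by reflection across the Lipschitz boundary); your justification that ``only the restriction to $\Omega$ matters'' is exactly the false step, since your coarea bound $\int_\Omega|\nabla u_\delta|\,dx\le|D\chi_{\widetilde A}|(\Omega_\delta)$ converges to $P(A;\Omega)+|D\chi_{\widetilde A}|(\partial\Omega)$, not to $P(A;\Omega)$, and the boundary term is generically positive. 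A secondary inaccuracy: a $C^1$ boundary can perfectly well contain a flat open piece of a hyperplane, so your stated reason for (iii) is wrong; the repair is the one you gesture at anyway — for each face direction only countably many parallel hyperplanes can meet $\partial\Omega$ in positive $\Hd$-measure (their intersections are pairwise disjoint subsets of a set of finite $\Hd$-measure), so a generic arbitrarily small translation achieves (iii) without affecting (i) and (ii).
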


\smallskip

\textbf{Slicing.} Given an open set $\Omega\subseteq\Rd$, $\xi\in\Sd$, and $y\in \xi^\perp$, we define the slice $\Omega^{\xi,y}$ as
\begin{equation*}
    \Omega^{\xi,y}:=\{t\in\R: y+t\xi\in\Omega\},
\end{equation*}
and we define $\Omega^\xi$ the orthogonal projection of $\Omega$ on $\xi^\perp$ as 
\begin{equation*}
    \Omega^{\xi}:=\{y\in\xi^\perp : \Omega^{\xi,y}\neq \emptyset\}.
\end{equation*}
For every function $u:\Omega\to \R$ and for every $y\in\Omega^\xi$, we define the function $u^{\xi,y}:\Omega^{\xi,y}\to\R$ by
\begin{equation*}
    u^{\xi,y}(t):=u(y+t\xi).
\end{equation*}
 Using Theorem $11.45$ in \cite{Leoni2nd} one can see that given a function $u\in H^k(\Omega)$, for every $\xi\in \Sd$ and for $\Hd$-a.e.\ $y\in\xi^\perp$, it holds that
\begin{equation*}
    \nabla^{(\ell)}u(y+t\xi)(\,\underbrace{\xi,...,\xi}_{\ell \text{ times}}\,) = \frac{d^\ell u^{\xi,y}}{dt^\ell}(t) \quad \text{for } \mathcal{L}^1 \text{-a.e.\ }t\in \Omega^{\xi,y}, \text{and for all } \ell\in\{1,...,k-1\}.
\end{equation*}

\noindent We also mention \cite[Remark 3.104]{AmbrFuscoPallara} that yields a characterization of functions of bounded variation with values in $\{-1,1\}$ in terms of their slices.

\begin{theorem}\label{bvslicing} A function $u\in L^1(\Omega;\{-1,1\})$ belongs to $BV(\Omega;\{-1,1\})$ if and only if for every $j\in\{1,...,d\}$ the one-dimensional slices of the function $u$ given by $u^{e_j, y}$ have bounded variation in the open set $\Omega^{e_j,y}$ for $\Hd$-a.e.\ $y\in e_j^{\perp}$, and 
\begin{equation*}
    \int_{\Omega^{e_j}} \# (S(u^{e_j,y})\cap\Omega^{e_j,y})\,d\Hd(y)<+\infty.
\end{equation*}
\end{theorem}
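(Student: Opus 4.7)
The plan is to reduce this to the classical one-directional slicing characterization of $BV$ (see for instance Theorem 3.103 in Ambrosio--Fusco--Pallara), together with the elementary observation that a $\{-1,1\}$-valued function on an open subset of $\mathbb{R}$ lies in $BV$ precisely when its singular set is finite, in which case the total variation equals twice the cardinality of that set. The proof splits into the two implications.

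For the forward direction, suppose $u\in BV(\Omega;\{-1,1\})$. By the classical slicing theorem applied in the direction $e_j$, for $\Hd$-a.e. $y\in e_j^\perp$ the slice $u^{e_j,y}$ belongs to $BV(\Omega^{e_j,y})$, and
\begin{equation*}
|D_j u|(\Omega)=\int_{e_j^\perp} |Du^{e_j,y}|(\Omega^{e_j,y})\,d\Hd(y).
\end{equation*}
By Fubini, for $\Hd$-a.e. $y$ the slice $u^{e_j,y}$ is itself $\{-1,1\}$-valued, hence a step function that is $BV$ iff it has only finitely many jumps, and
\begin{equation*}
|Du^{e_j,y}|(\Omega^{e_j,y})=2\,\#(S(u^{e_j,y})\cap \Omega^{e_j,y}).
\end{equation*}
Plugging this into the displayed equality gives the integrability condition in the statement.

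For the reverse direction, assume the slicing hypotheses hold for every coordinate direction $e_j$. The same identity $|Du^{e_j,y}|(\Omega^{e_j,y})=2\#(S(u^{e_j,y})\cap \Omega^{e_j,y})$ shows that $y\mapsto |Du^{e_j,y}|(\Omega^{e_j,y})$ is integrable on $\Omega^{e_j}$. The converse direction of the slicing theorem then yields that each partial distributional derivative $D_j u$ is a finite Radon measure on $\Omega$, so $u\in BV(\Omega)$; since $u$ was assumed $\{-1,1\}$-valued in $L^1(\Omega)$, we conclude $u\in BV(\Omega;\{-1,1\})$.

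The content is essentially a repackaging of a standard tool, so there is no real obstacle; the only points of care are the Fubini argument that ensures the slices inherit the target set $\{-1,1\}$ on a set of full $\Hd$-measure, and the fact that $u$ needs to be tested along all $d$ coordinate directions (the hypothesis along a single direction controls only $|D_j u|$, not the full variation $|Du|$). Checking the $\{-1,1\}$-valued slicing identity $|Du^{e_j,y}|=2\#S(u^{e_j,y})$ on $BV$ functions of one real variable is immediate from the structure of scalar $BV$ functions, which reduces the theorem to a direct citation of the general slicing characterization.
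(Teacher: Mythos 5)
Your proof is correct. The paper itself states Theorem \ref{bvslicing} as a known preliminary (with \cite{AmbrFuscoPallara} indicated as the reference for the section) and does not supply a proof, so there is no in-paper argument to compare against. Your reduction to the classical one-directional slicing characterization of $BV$, combined with the observation that for a $\{-1,1\}$-valued function of one real variable the total variation equals $2\,\#S(u^{e_j,y})$, is precisely the standard way to derive this statement; you also correctly flag the two points that need care, namely the Fubini argument ensuring the slices are $\{-1,1\}$-valued for $\mathcal{H}^{d-1}$-a.e.\ $y$, and the fact that the hypothesis must hold in all $d$ coordinate directions since a single direction only controls $|D_j u|$.
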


\smallskip

$\mathbf{\Gamma}$\textbf{-convergence.} We briefly recall the definition of $\Gamma$-convergence in $L^2(\Omega)$. For a general introduction to the topic we refer the reader to \cite{BraidesGamma,DalMaso}. Given functionals $G_\e: L^2(\Omega)\to[-\infty, +\infty]$ for $\e>0$ and $G_0: L^2(\Omega)\to[-\infty, +\infty]$, we say that $G$ is the $\Gamma$-limit of the family $\{G_\e\}_\e$, and we write $G_0=\Gamma$-$\lim_{\e\to0} G_\e$ if, for every vanishing sequence $\{\e_n\}_n$ the following conditions hold:
\begin{itemize}
    \item[(i)] For every $u\in L^2(\Omega)$ and $\{u_n\}_n$ such that $u_n\to u$ in $L^2(\Omega)$ as $n\to+\infty$, it holds that
    \begin{equation*}
        \liminf_{n\to+\infty} G_{\e_n}(u_n) \geq G_0(u);
    \end{equation*}
    \item[(ii)] For every $u\in L^2(\Omega)$ there exists $\{u_n\}_n$ such that $u_n\to u$ in $L^2(\Omega)$ as $n\to+\infty$ and
    \begin{equation*}
        \limsup_{n\to+\infty} G_{\e_n}(u_n) \leq G_0(u).
    \end{equation*}
\end{itemize}
It is also convenient to recall the following definitions:
\begin{eqnarray}\notag
    \Gamma\text{-}\liminf_{\e\to0} G_\e (u) := \inf\{\liminf_{n\to+\infty} G_{\e_n}(u_n): \{\e_n\}_n \text{ vanishing},\, u_n\to u \text{ in } L^2(\Omega) \}; \\\label{def Gammalimsup}
        \Gamma\text{-}\limsup_{\e\to0} G_\e (u) := \inf\{\limsup_{n\to+\infty} G_{\e_n}(u_n): \{\e_n\}_n \text{ vanishing},\, u_n\to u \text{ in } L^2(\Omega) \}.
\end{eqnarray}
Both functionals above are lower semicontinuous  with respect to the strong convergence in $L^2(\Omega)$, they coincide if and only if $\Gamma$-$\lim_{\e\to0} G_\e$ exists, and in this case the $\Gamma$-limit coincides with their common value.

\smallskip 

We conclude this section proving a characterization of the density introduced in \eqref{density}. In view of the asymptotic analysis, the following proposition makes irrelevant the choice of the orthonormal basis $\{\nu_1,...,\nu_{d-1},\nu\}$ fixed in the definitions \eqref{Q^nu} and \eqref{A_nu}.  In the following we let $SO(d)$ denote the group of $d\times d$ orthogonal matrices with determinant equal to $1$.

\begin{proposition}\label{densitywelldef} For every $\nu\in\Sd$ and for every $R\in SO(d)$ such that $R\nu=\nu$ it holds 
 \begin{align}\label{densitybis} 
       g(\nu) & = \inf\Big\{\int_{{R(Q_1^\nu)}} \Bigl[\frac{1}{\e}W(v)+\sum_{\ell=1}^{k}q_\ell\e^{2\ell-1} |\nabla^{(\ell)}v|_\ell^2\Bigr]\,dx  : v \circ R \in \A_\e^\nu,\, \e\in(0,1)\Big\} \\  \notag
       & = \inf\Big\{\int_{{R'(Q_1^\nu)}} \Bigl[\frac{1}{\e}W(v)+\sum_{\ell=1}^{k}q_\ell\e^{2\ell-1} |\nabla^{(\ell)}v|_\ell^2\Bigr]\,dx  :   v \circ R' \in \A_\e^\nu,\, \e\in(0,1), \\ \label{densitytris} 
       & \qquad \qquad \qquad \qquad \qquad \qquad \qquad \qquad \qquad \qquad \quad  R'\in SO(d) \text{ with } R'\nu=\nu\Big\}.
 \end{align}
 Moreover, $g$ is upper semicontinuous.
\end{proposition}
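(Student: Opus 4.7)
The statement splits into two claims --- the basis-independence formulas \eqref{densitybis}--\eqref{densitytris} and the upper semicontinuity of $g$ --- which I plan to address in turn. The equality \eqref{densitytris} is an immediate consequence of \eqref{densitybis}, since if the infimum on the right-hand side of \eqref{densitybis} equals $g(\nu)$ for every admissible $R$, then taking the further infimum over $R$ still yields $g(\nu)$; so the core is \eqref{densitybis}.

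To prove \eqref{densitybis} for a fixed $R \in SO(d)$ with $R\nu=\nu$, denote the right-hand side by $g^R(\nu)$ and show both $g^R(\nu)\leq g(\nu)$ and $g^R(\nu) \geq g(\nu)$, the reverse inequality following by applying the same argument with $R^{-1}$. For the direction $g^R(\nu) \leq g(\nu)$, given $w \in \A_\e^\nu$ with energy close to $g(\nu)$, I would construct a competitor $v$ on $R(Q_1^\nu)$ by gluing: pick $r \in (0,1)$ and $z_0 \in \nu^\perp$ with $Q_r^\nu(z_0) \subset R(Q_1^\nu)$, place a translated and rescaled copy of $w$ on $Q_r^\nu(z_0)$, and set $v := \bar u^\nu_{\e r}$ on the complement. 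Because $z_0\cdot\nu = 0$ and $w = \bar u^\nu_\e$ near $\partial Q_1^\nu$, the two pieces match in an $H^k$-sense across $\partial Q_r^\nu(z_0)$, and the condition $v \circ R \in \A_{\e r}^\nu$ follows automatically from the $R$-invariance of $\bar u^\nu_{\e r}$. By a change of variables the energy splits as $r^{d-1}$ times the energy of $w$ plus a one-dimensional filler cost on the annular region bounded by $C(1 - r^{d-1})$, where $C$ depends only on $\bar u$ and the $|\nu^{\otimes \ell}|_\ell$. Performing the same construction on a large cube via the asymptotic reformulation in Remark~\ref{re.inftolim} (which rewrites $g(\nu) = \lim_{T \to \infty} T^{-(d-1)} \inf_{\widetilde{\A}_T^\nu}\int_{Q_T^\nu}[\cdots]\,dx$) makes the filler cost a surface contribution of order $T^{d-2}$, hence negligible against the leading $T^{d-1}$ scale, giving $g^R(\nu) \leq g(\nu)$. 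For the upper semicontinuity, given $\nu_n \to \nu$ and $\delta > 0$, I would pick $\e$ and $w \in \A_\e^\nu$ with energy below $g(\nu) + \delta$, select $R_n \in SO(d)$ with $R_n\nu = \nu_n$ and $R_n \to I$, and set $w_n(x) := w(R_n^{-1}x)$; by \eqref{densitybis} applied at $\nu_n$ (which identifies the rotated cube $R_n(Q_1^\nu)$ with an admissible cube for $g(\nu_n)$) this is an admissible competitor, and $R_n \to I$ together with continuity of the norms $|\cdot|_\ell$ yields convergence of energies, so $\limsup_n g(\nu_n) \leq g(\nu) + \delta$; letting $\delta \to 0$ concludes.

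The main technical obstacle is the presence of the general, anisotropic tensor norms $|\cdot|_\ell$. A direct change of variable $v \leftrightarrow v\circ R$ fails to preserve the integrand, since $\nabla^{(\ell)}v(x) = R^{\otimes \ell}\,\nabla^{(\ell)}(v\circ R)(R^{-1}x)$ and $|R^{\otimes \ell}T|_\ell$ need not equal $|T|_\ell$. The gluing construction above sidesteps this by exploiting the $R$-invariance of $\bar u^\nu_\e$ (which uses $R\nu = \nu$) and placing the nontrivial part of the competitor on a sub-cube aligned with the standard basis; the price is an $O(1)$ filler cost that has to be absorbed by passing to the large-cube asymptotic formulation of $g$.
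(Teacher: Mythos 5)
Your reduction of \eqref{densitytris} to \eqref{densitybis} and your sketch of the upper semicontinuity argument match the paper's approach in spirit, but the core step of proving \eqref{densitybis} has a quantitative gap that makes the proposed construction fail.

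The problem is the filler-cost estimate. Given $w\in\A_\e^\nu$ you place a single rescaled copy of $w$ on a sub-cube $Q_r^\nu(z_0)\subset R(Q_1^\nu)$ and fill the complement with $\overline{u}^\nu_{\e r}$. Unless $R$ is trivial, the largest axis-aligned cube inscribed in the rotated cube has a side-ratio $r$ strictly less than $1$, fixed once and for all by the geometry of $R|_{\nu^\perp}$. The filler region is then a full-dimensional set whose orthogonal projection onto $\nu^\perp$ has $\Hd$-measure $\approx 1-r^{d-1}$, a \emph{fixed positive fraction} of the total. Since the energy of $\overline{u}^\nu_\eta$ on such a set is proportional to the $\Hd$-measure of that projection (by Fubini), the filler cost contributes $c_0(1-r^{d-1})$ at scale $1$, and $c_0(1-r^{d-1})T^{d-1}$ at scale $T$, not $O(T^{d-2})$ as you claimed. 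Dividing by $T^{d-1}$ does not kill this contribution, so passing to the large-cube asymptotics does not help: the estimate you get is $g^R(\nu)\le r^{d-1}g(\nu)+C(1-r^{d-1})$, which is useless.

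What the paper does instead is to \emph{tile} the large axis-aligned cube $Q_S^\nu$ with many translated copies of the rotated small cube $R(Q_T^\nu)$, translating in $\nu^\perp$, and fill only the thin residual region near $\partial Q_S^\nu$ with $\overline{u}^\nu$. This is the key point: when $S/T\to\infty$ the number of tiles grows like $(S/T)^{d-1}$ and the $\nu^\perp$-area not covered by tiles is $o(S^{d-1})$, so the filler cost vanishes when normalised by $S^{d-1}$. A single inscribed sub-cube does not achieve this; the tiling is essential. Two further remarks: (i) invoking Remark~\ref{re.inftolim} is circular as the paper establishes it precisely as a by-product of the tiling bound in this proposition, so you would need to re-derive the asymptotic formula independently (which again would require the tiling argument or an equivalent subadditivity estimate); (ii) your upper-semicontinuity step (pick $R_n\to I$ with $R_n\nu=\nu_n$, set $v_n:=v\circ R_n^{-1}$, use \eqref{densitytris} plus convergence of the norms, letting $\delta\to0$) is essentially the paper's argument and is fine once \eqref{densitybis}--\eqref{densitytris} are in place, though to pass to the limit rigorously you also need a domination (the paper bounds $|\nabla^{(\ell)}v_n(R_nx)|_\ell\le C|\nabla^{(\ell)}v(x)|_\ell$ using $R_n\to\I$ and the equivalence of tensor norms before applying Dominated Convergence), which you should make explicit.
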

\begin{proof}
We first prove \eqref{densitytris}. Fix $\nu\in\Sd$ and let $\e\in(0,1)$, $R\in SO(d)$ satisfying $R\nu=\nu$, and $v\circ R\in \A^\nu_\e$. We claim that for every $\eta\in(0,\e)$, there exists a function $w_\eta\in \A_\eta^\nu$ such that 
\begin{align}\label{claimdensity}
    \limsup _{\eta\to0}\int_{Q_1^\nu} \Bigl[\frac{1}{\eta}W(w_\eta)+\sum_{\ell=1}^{k}q_\ell\eta^{2\ell-1} |\nabla^{(\ell)}w_\eta|_\ell^2\Bigr]\,dx \leq \int_{R(Q_1^\nu)} \Bigl[\frac{1}{\e}W(v)+\sum_{\ell=1}^{k}q_\ell\e^{2\ell-1} |\nabla^{(\ell)}v|_\ell^2\Bigr]\,dx.    
\end{align}

For the sake of exposition, we set $T:=1/\e$ and $\widetilde{v}(x):=v(\frac{1}{T}x)$ for all $x\in R(Q_T^\nu)$. For $S>T$, we introduce 
    \[
    \mathcal{Z}(T,S):=\{z=T(z_1\nu_1+...+z_{d-1}\nu_{d-1}): z+R(Q_T^\nu) \subset Q_S^\nu,(z_1,...,z_{d-1})\in\mathbb{Z}^{d-1} \},
    \]
    where $\{\nu_1,...,\nu_{d-1},\nu\}$ is the orthonormal basis that we arbitrarily fixed in the definitions \eqref{Q^nu} and \eqref{A_nu},
and 
    \begin{equation*}
        E(T,S):=\bigcup_{z\in \mathcal{Z}(T,S)} \bigl(z +R(Q_T^\nu) \bigr).
    \end{equation*} 
Then, we define the function
\begin{equation*}
    \widetilde{w}_S(x):=
    \begin{cases}
  \widetilde{v}(x-z) & \text{ if } x\in z+R(Q_T^\nu) \text{ for some } z\in \mathcal{Z}(T,S), \\
  \overline{u}^\nu(x) & \text{ otherwise in } Q_S^\nu,
  \end{cases}
    \end{equation*}
and we note that $\widetilde{w}\in H^k(Q_S^\nu)$ with $\widetilde{w}_S=\overline{u}^\nu$ close to $\partial Q_S^\nu$. Recalling the notation \eqref{functionals}, we have
 \begin{align*}
         \frac{F_1(\widetilde{w}_S, Q_S^\nu)}{S^{d-1}} & = \frac{F_1(\widetilde{w}_S, E(T,S)) +  F_1(\widetilde{w}_S, Q_S^\nu\setminus E(T,S)) }{S^{d-1}} \\
        & = \Bigl(\sum_{z\in \mathcal{Z}(T,S)} \frac{F_1(\widetilde{w}_S, z+R(Q_T^\nu))}{S^{d-1}}\Bigr) + \frac{F_1(\widetilde{w}_S, Q_S^\nu\setminus E(T,S))}{S^{d-1}}  \\
        & = \#\mathcal{Z}(T,S)\frac{F_1(\widetilde{v}, R(Q_T^\nu))}{S^{d-1}} + \frac{\Hd(\pi^\nu( Q_S^\nu\setminus E(T,S))}{S^{d-1}}c_0,
        \end{align*}
    having set $c_0:=\int_{-\infty}^{+\infty} \bigl[W(\overline{u})+\sum_{\ell=1}^kq_\ell|\overline{u}^{(\ell)}|^2\bigr]\,dt$ and letting $\pi^\nu$ denote the orthogonal projection on the hyperplane $\nu^\perp$. 
\noindent  We observe that 
    \begin{equation*}
        \lim_{S\to+\infty}\frac{\#\mathcal{Z}(T,S)}{(S/T)^{d-1}}=1,
    \end{equation*}
    and
    \begin{equation*}
    \Hd(\pi^\nu(Q_S^\nu\setminus E(T,S))= S^{d-1}-\#\mathcal{Z}(T,S)T^{d-1},
    \end{equation*}
therefore, 
\begin{equation*}
    \limsup_{S\to+\infty}\frac{F_1(\widetilde{w}_S, Q_S^\nu)}{S^{d-1}} \leq  \frac{F_1(\widetilde{v}, R(Q_T^\nu))}{T^{d-1}}; 
\end{equation*}
that is
\begin{equation}\label{propdensity1}
    \limsup_{S\to+\infty}\frac{1}{S^{d-1}}\int_{Q_S^\nu} \Bigl[W(\widetilde{w}_S)+\sum_{\ell=1}^kq_\ell|\nabla^{(\ell)}\widetilde{w}_S|_\ell^2\Bigr]\,dx \leq \frac{1}{T^{d-1}}\int_{R(Q_T^\nu)} \Bigl[W(\widetilde{v})+\sum_{\ell=1}^kq_\ell|\nabla^{(\ell)}\widetilde{v}|_\ell^2\Bigr]\,dx.
\end{equation}
Finally, we 
set
\begin{equation*}
    \eta:=\frac{1}{S}, \qquad   w_\eta(x):=\widetilde{w}_{\frac{1}{\eta}}\Bigl(\frac{1}{\eta}x\Bigr) \text{ for all } x\in Q_1^\nu,
\end{equation*}
so that $w_\eta\in \mathcal{A}_\eta^\nu$. By applying the change of variables $y:=\eta x$ on the left-hand side and $y:=\e x$ on the right-hand side, \eqref{propdensity1} can be written as
\begin{equation*}
    \limsup_{\eta\to0}\int_{Q_1^\nu} \Bigl[\frac{1}{\eta}W(w_\eta)+\sum_{\ell=1}^kq_\ell\eta^{2\ell-1}|\nabla^{(\ell)}w_\eta|_\ell^2\Bigr]\,dy \leq \int_{R(Q_1^\nu)} \Bigl[\frac{1}{\e}W(v)+\sum_{\ell=1}^kq_\ell\e^{2\ell-1}|\nabla^{(\ell)}v|_\ell^2\Bigr]\,dy,
\end{equation*}
which is \eqref{claimdensity}. By the arbitrariness of $\e, R,$ and $v$, inequality \eqref{claimdensity} and a diagonal argument yield \eqref{densitytris}.

As for the proof of \eqref{densitybis}, consider $R_1, R_2\in SO(d)$ satisfying $R_1\nu=R_2\nu=\nu$ and $v\circ R_1\in \A_\e^\nu$. With a similar argument to the one employed above, we may prove that for every $\eta\in(0,\e)$, there exists a function $w_\eta$ such that $w_\eta\circ R_2\in \A_\eta^\nu$ and  
\begin{align}\notag
    \limsup _{\eta\to0}\int_{R_2(Q_1^\nu)} \Bigl[\frac{1}{\eta}W(w_\eta)+\sum_{\ell=1}^{k}q_\ell\eta^{2\ell-1} |\nabla^{(\ell)}w_\eta|_\ell^2\Bigr]\,dx \leq \int_{R_1(Q_1^\nu)} \Bigl[\frac{1}{\e}W(v)+\sum_{\ell=1}^{k}q_\ell\e^{2\ell-1} |\nabla^{(\ell)}v|_\ell^2\Bigr]\,dx,    
\end{align}
which, by the arbitrariness of the rotations $R_1, R_2$ and of the function $v$, proves \eqref{densitybis}.

 Finally, we prove that $g$ is upper semicontinuous. Let $\nu \in \Sd$ and $\{\nu_n\}_n \subset \Sd$ such that $\nu_n \to \nu.$ We aim to show that
 \begin{equation*}
 \limsup_{n\to +\infty} g(\nu_n) \le g(\nu).\end{equation*}
 Fix $\eta >0$ and let $\e \in (0,1)$ and $v \in \mathcal{A}_\e^\nu$ such that
 \begin{equation}\label{upper semicontinuity 1}
 g(\nu) + \eta \ge \int_{Q_1^\nu} \Bigl[\frac{1}{\e}W(v)+\sum_{\ell=1}^{k}q_\ell\e^{2\ell-1} |\nabla^{(\ell)}v|_\ell^2\Bigr]\,dx.\end{equation}
 For each $n \in \N$ we can find $R_n = \left(R_n^{i,j}\right)_{i,j=1}^d\in SO(d) $ such that
 $R_n \nu= \nu_n$ and $R_n \to \I.$ Consider the cube $R_n(Q_1^\nu)$ and, for each $n$, a rotation $S_n \in SO(d)$ such that 
 \begin{equation*}
 S_n \nu_n= \nu_n \text{ and } R_n (Q_1^\nu)=S_n (Q_1^ {\nu_n}).
\end{equation*}
Define $v_n(x):=v(R_n^Tx)$ for every $x \in R_n(Q_1^\nu)=S_n(Q_1^{\nu_n})$ and note that $v_n\circ S_n \in \mathcal{A}_\e^{\nu_n}$. By \eqref{densitytris} and a change of variables
 \begin{align}\notag
 g(\nu_n) & \le \int _{S_n(Q_1^{\nu_n})}\Bigl[\frac{1}{\e}W(v_n(x))+\sum_{\ell=1}^{k}q_\ell\e^{2\ell-1} |\nabla^{(\ell)}v_n(x)|_\ell^2\Bigr] \, dx  \\ \notag
 & = \int _{R_n(Q_1^\nu)}\Bigl[\frac{1}{\e}W(v_n(x))+\sum_{\ell=1}^{k}q_\ell\e^{2\ell-1} |\nabla^{(\ell)}v_n( x)|_\ell^2\Bigr] \, dx \\
 & = \int _{Q_1^\nu}\Bigl[\frac{1}{\e}W(v(x))+\sum_{\ell=1}^{k}q_\ell\e^{2\ell-1} |\nabla^{(\ell)}v_{n}( R_n x)|_\ell^2\Bigr] \, dx. \label{upper semicontinuity w}
 \end{align}
 
We show that, for each $\ell \in \{1,...,k\}$ and  $\Ld$-a.e.\ $x \in Q_{1}^\nu$, we have $\nabla^{(\ell)}v_n(R_n x) \to \nabla^{(\ell)}v(x) $ in the space of $\ell$-tensors as $n\to+\infty$. 
By the equivalence of the norms, to prove this it is sufficient to show that for $\Ld$-a.e.\ $x\in Q^\nu_1$  it holds
\begin{equation}\label{convergence of tensors}
\lim_{n\to +\infty} \nabla ^{(\ell)}v_n(R_nx)(e_{i_1},...,e_{i_\ell})=\nabla ^{(\ell)}v(x)(e_{i_1},...,e_{i_\ell})\,\, \text{ for every } (i_1,...,i_\ell) \in \{1,...,d\}^\ell.
\end{equation}
Let us fix $(i_1,...,i_\ell)\in\{1,...,d\}^{\ell}$. For $\Ld$-a.e.\ $x\in Q^\nu_1$ we have
\begin{align*}
    \lim_{n\to +\infty}\nabla ^{(\ell)}v_n(R_nx)(e_{i_1},...,e_{i_\ell}) & = \lim_{n\to +\infty}\frac{\partial^\ell v_n}{\partial x_{i_1}...\partial x_{i_\ell}} (R_nx) \\
    & = \lim_{n\to +\infty}\sum_{j_1=1}^d... \sum_{j_\ell=1}^d R_n^{i_1, j_1}... \ R_n^{i_\ell, j_\ell} \frac{\partial^\ell v}{\partial x_{j_1}...\partial x_{j_\ell}} (x) \\
    & = \sum_{j_1=1}^d... \sum_{j_\ell=1}^d \delta_ {i_1j_1}...\delta_{i_\ell j_\ell} \frac{\partial^\ell v}{\partial x_{j_1}...\partial x_{j_\ell}} (x) \\
    &  =\frac{\partial^\ell v}{\partial x_{i_1}...\partial x_{i_\ell}} (x) 
     = \nabla ^{(\ell)}v(x)(e_{i_1},...,e_{i_\ell}),
\end{align*}
and this equality proves \eqref{convergence of tensors}. 

 To conclude, we first observe that, since $R_n\to\I$, we may assume that $|R_n^{i,j}|\leq 2$ for all $i,j\in\{1,...,d\}$ and $n\in\N$ so that 
\begin{align}\notag
    |\nabla^{(\ell)}v_n(R_n x)|_\ell & \le C\max \Bigl\{ \big|\dfrac{\partial^\ell v}{\partial x_{i_1}...\partial x_{i_\ell}} (R_nx)\big|: \ (i_1,....,i_\ell) \in \{1,...,d\}^\ell \Bigr\} \\\notag
    & = C \max \Bigl\{ \bigl|\sum_{j_1=1}^d... \sum_{j_\ell=1}^d R_n^{i_1, j_1}... \ R_n^{i_\ell, j_\ell} \dfrac{\partial^\ell v}{\partial x_{j_1}...\partial x_{j_\ell}} (x)\bigr|: \ (i_1,....,i_\ell) \in \{1,...,d\}^\ell \Bigr\} \\\ 
    & \le \label{bound tensor norms} C \max \Bigl\{ \big|\dfrac{\partial^\ell v}{\partial x_{i_1}...\partial x_{i_\ell}} (x)\big|: \ (i_1,....,i_\ell) \in \{1,...,d\}^\ell \Bigr\}\leq  C|\nabla^{(\ell)}v(x)|_\ell,
\end{align}
where $C$ is a positive constant (that may very from line to line) which only depends on $d$ and $\ell$ as it arises by the equivalence of the tensor norms. Finally, combining \eqref{convergence of tensors} with \eqref{bound tensor norms}, we apply the Dominated Convergence Theorem in \eqref{upper semicontinuity w}, so that by \eqref{upper semicontinuity 1} we obtain
\begin{align*}
 \limsup_{n \to +\infty }g(\nu_n) &  \le 
 \lim_{n\to +\infty }\int _{Q_{1}^{\nu}}\Bigl[\frac{1}{\e}W(v(x))+\sum_{\ell=1}^{k}q_\ell\e^{2\ell-1} |\nabla^{(\ell)}v_n(R_n x)|_\ell^2\Bigr] \, dx\\
 & =\int _{Q_{1}^{\nu}}\Bigl[\frac{1}{\e}W(v(x))+\sum_{\ell=1}^{k}q_\ell\e^{2\ell-1} |\nabla^{(\ell)}v(x)|_\ell^2\Bigr] \, dx 
 \leq g(\nu)+\eta.
 \end{align*}
 The arbitrariness of $\eta$ concludes the proof.
\end{proof}

\begin{remark}\label{re.inftolim}
 {\rm   With the previous proof, we also showed that 
   \begin{equation*}
       g(\nu)=\lim_{\e\to0}\,\inf\Big\{\int_{{Q_1^\nu}} \Bigl[\frac{1}{\e}W(v)+\sum_{\ell=1}^{k}q_\ell\e^{2\ell-1} |\nabla^{(\ell)}v|_\ell^2\Bigr]\,dx  :  v \in \A^\nu_\e\Big\}.
   \end{equation*}
   This observation will be useful in the last section in order to study the density $g$ in the one-dimensional case.
   }
\end{remark}
        
\section{Interpolation}

This section is devoted to the proof of Theorem \ref{nonlinintd}. Furthermore, we obtain a useful result (Corollary \ref{interpolationwithnorms}) that we will use in the proof of Theorem \ref{compactness}. Throughout this section, we keep track of the dependence of the constants on the involved parameters with the only exceptions of $d$, the dimension of the ambient Euclidean space, and $k$, the highest order of derivation. 

We start by mentioning a classical interpolation estimate on intervals (see Theorem $7.41$ in \cite{Leoni2nd}).  For simplicity of notation, for every interval $I$ we let $|I|$ denote its length.

\begin{theorem}\label{Leoni1d} Let $I\subset \R$ be a bounded, open interval and let $k$
be a positive integer. Then there exists a positive constant $c$ independent of $I$ such that
\begin{equation}\notag
\    \int_I |u^{(k-1)}|^2\,dt \leq c|I|^{-2(k-1)}\int_I u^2\,dt+ c \int_I |u^{(k)}|^2\,dt
\end{equation}
for every $u\in H^k(I)$.
\end{theorem}

We adapt the argument of Lemma $3.1$ in \cite{CSZ} to bound the square of the $L^2$-norm of the derivative of order $k-1$ by means of the integral of the double-well potential  and the square of the $L^2$-norm of the derivative of order $k$. Then, we deduce the same kind of estimate for any derivate of intermediate order $\ell\in\{1,...,k-1\}$. 
\begin{lemma}\label{nonlinint1d}
Let $I\subset \R$ be a bounded, open interval, let $k>1$
be an integer, and assume that {\rm (H2)} is satisfied. Then there exists a positive constant $q'_{k-1}$ independent of $I$ such that
\begin{equation*}
    q\int_I |u^{(k-1)}(t)|^2\,dt \leq |I|^{-2(k-1)}\int_I W(u)\,dt+ |I|^2\int_I |u^{(k)}|^2\,dt
\end{equation*}
for every $q<q'_{k-1}$ and $u\in H^k(I)$.
\end{lemma}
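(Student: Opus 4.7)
My plan is as follows. The first step is to exploit scale invariance to reduce to the unit interval. Under the substitution $t=|I|\tau$ and $v(\tau):=u(|I|\tau)$ one verifies the scaling identities
\[
\int_I |u^{(\ell)}|^2\,dt = |I|^{1-2\ell}\int_0^1 |v^{(\ell)}|^2\,d\tau, \qquad \int_I W(u)\,dt = |I|\int_0^1 W(v)\,d\tau,
\]
and substitution shows that the desired inequality on $I$ is equivalent, with the same constant $q$, to
\[
q\int_0^1 |v^{(k-1)}|^2\,d\tau \le \int_0^1 W(v)\,d\tau + \int_0^1 |v^{(k)}|^2\,d\tau.
\]
Hence any $q'_{k-1}$ working on the unit interval is automatically independent of $I$.

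For this reduced inequality I would argue by contradiction and compactness. Assuming no such $q>0$ exists, I extract a sequence $\{u_n\}\subset H^k((0,1))$ with
\[
\alpha_n := \int_0^1 |u_n^{(k-1)}|^2\,dt > n\bigl(\beta_n+\gamma_n\bigr), \quad \beta_n := \int_0^1 W(u_n)\,dt, \quad \gamma_n := \int_0^1 |u_n^{(k)}|^2\,dt.
\]
A short computation from (H2) yields the pointwise bound $W(s)\ge \tfrac{1}{2}s^2-1$ (since $(s\mp 1)^2-\tfrac12 s^2+1=\tfrac12(s\mp 2)^2\ge 0$), hence $\int u_n^2\le 2\beta_n+2$. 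Plugging this into Theorem~\ref{Leoni1d} on the unit interval gives $\alpha_n\le 2c(\beta_n+\gamma_n)+2c$, and inserting $\beta_n+\gamma_n<\alpha_n/n$ shows $\alpha_n$ stays bounded for $n$ large; therefore $\beta_n+\gamma_n\to 0$. Iterated interpolation yields a uniform $H^k$-bound on $\{u_n\}$, and Rellich-Kondrachov provides a subsequence converging strongly in $H^{k-1}((0,1))$ to some $u$. Since $\int W(u_n)\to 0$ and $u_n\to u$ a.e., we have $u\in\{-1,1\}$ a.e.; as $u\in H^{k-1}\hookrightarrow C^0$ (possible because $k\ge 2$), in fact $u\equiv 1$ or $u\equiv -1$ on $(0,1)$.

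The decisive step, which is the heart of the argument, linearizes around the well. Assume WLOG $u\equiv 1$ and set $v_n:=u_n-1$, so that $v_n\to 0$ in $H^{k-1}$ and, by Sobolev embedding, also in $L^\infty$. For $n$ large so that $\|v_n\|_\infty\le 1$, one has $|u_n+1|\ge 1\ge |u_n-1|$, so (H2) simplifies pointwise to $W(u_n)\ge(u_n-1)^2=v_n^2$ and $\int v_n^2\le\beta_n$. Since $v_n^{(\ell)}=u_n^{(\ell)}$ for $\ell\ge 1$, applying Theorem~\ref{Leoni1d} to $v_n$ on the unit interval gives
\[
\alpha_n = \int |v_n^{(k-1)}|^2\,dt \le c\int v_n^2\,dt + c\gamma_n \le c\bigl(\beta_n+\gamma_n\bigr),
\]
which, combined with $\alpha_n>n(\beta_n+\gamma_n)$, forces $n<c$ for $n$ large — the desired contradiction.

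The main obstacle is conceptual rather than technical: Theorem~\ref{Leoni1d} cannot be applied directly to $u_n$ to obtain the inequality, because $\int u^2$ cannot be dominated by $\int W(u)+\int|u^{(k)}|^2$ alone (take $u\equiv 1$: the right-hand side vanishes while $\int u^2=|I|$). The contradiction-compactness scheme is designed precisely to push $u_n$ into the quadratic regime of one well, where $W(u_n)\gtrsim (u_n\mp 1)^2$, so that the classical linear interpolation can be applied to the deviation $v_n$ instead of $u_n$, producing the sought bound.
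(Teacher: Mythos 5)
Your argument is correct, but it takes a genuinely different route from the paper's. Both begin with the identical rescaling to the unit interval, so independence of $q'_{k-1}$ from $I$ comes out the same way. The paper then proceeds directly and constructively: it sets $z:=\int_0^1 u^{(k-1)}\,dt$, uses the Fundamental Theorem of Calculus to reduce the claim to $z^2\le C\int_0^1[W(u)+|u^{(k)}|^2]\,dt$, and when $z$ is not already controlled by $\int|u^{(k)}|$, Jensen's inequality forces $u^{(k-1)}$ to stay uniformly away from zero; iterating Rolle's theorem over the subintervals $(\tfrac{j}{k},\tfrac{j+1}{k})$ produces a subinterval of length $1/k$ on which $u$ has a fixed sign, and there Theorem~\ref{Leoni1d} applied to $u\mp 1$ together with (H2) gives the bound, yielding an explicit (if not sharp) constant. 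Your proof instead runs a contradiction–compactness scheme: you extract a bad sequence, show via the crude bound $W(s)\ge\tfrac12 s^2-1$ that the $H^k$ norms stay bounded, use Rellich to converge strongly in $H^{k-1}$ to a limit which, because $\int W(u_n)\to 0$ and (H2) forces zeros of $W$ into $\{-1,1\}$, must be $\equiv\pm1$, and then linearize around that well so that (H2) yields $W(u_n)\ge(u_n\mp1)^2$ uniformly on the whole interval and \ref{Leoni1d} applies to $v_n=u_n\mp1$. Both proofs identify the same crux — (H2) makes $W$ dominate the squared deviation from the nearest well, so the classical linear interpolation can be applied to that deviation — but the paper localizes this on a subinterval via a case analysis and Rolle, while you achieve it globally in the limit via compactness. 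Your version is softer and shorter but non-quantitative; the paper's is more explicit. Both are valid.
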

\begin{proof}
    It is sufficient to prove the statement for $I=(0,1)$, the general case being obtained by translating and rescaling.
    We set 
    \begin{equation*}
        z:= \int_0^1 u^{(k-1)}\,dt,
    \end{equation*}
    and by the Fundamental Theorem of Calculus we infer that    \begin{equation}\label{nonlinint1d1}
        |u^{(k-1)}(t)-z|\leq \int_0^1|u^{(k)}|\,dt \qquad \text{for all } t\in (0,1).
    \end{equation}
    As a consequence
    \begin{equation*}
        \int_0^1 |u^{(k-1)}|^2\,dt \leq 2 \int_0^1 |u^{(k)}|^2\,dt + 2z^2;
    \end{equation*}
    therefore, the proof is complete if we prove that 
    \begin{equation}\label{nonlinint1d2}
        z^2 \leq C\int_0^1 \Bigl[W(u)+|u^{(k)}|^2\Bigr]\,dt
    \end{equation} 
    for some $C>0$. If $z^2\leq 4\int_0^1 |u^{(k)}|^2\,dt$, then \eqref{nonlinint1d2} is immediately satisfied. Otherwise, by Jensen's inequality, we have that $|z| > 2\int_0^1 |u^{(k)}|\,dt$, which together with \eqref{nonlinint1d1}, yields 
    \begin{equation}\label{nonlinint1d3}
        |u^{(k-1)}(t)|\geq \frac{|z|}{2}>0 \qquad \text{for all } t\in(0,1).
    \end{equation}
    Consider the intervals $(\frac{j}{k}, \frac{j+1}{k}),\, j\in\{0,...,k-1\}$.  We observe that there exists $j^*\in\{0,...,k-1\}$ such that $u\neq 0$ on $(\frac{j*}{k}, \frac{j*+1}{k})$. To see this, assume by contradiction that $u$ vanishes on each of these intervals; then, $u$ has at least $k$ zeros on $I$. By Lagrange's Theorem, this implies that $u'$ vanishes at $k-1$ points of $I$, and then, iterating this argument, we obtain that $u^{(k-1)}$ vanishes at a point of $I$, contradicting \eqref{nonlinint1d3}. We suppose $u>0$ in $(\frac{j^*}{k},\frac{j^*+1}{k})$, the case $u<0$ being analogous.
    Applying Theorem \ref{Leoni1d} to the function $u-1$ on the interval $(\frac{j^*}{k},\frac{j^*+1}{k})$ and resorting to (H$2$), we obtain
    \begin{align*}
         \int_{\frac{j^*}{k}}^{\frac{j^*+1}{k}} |u^{(k-1)}|^2\,dt & \leq ck^{2(k-1)}\int_{\frac{j^*}{k}}^{\frac{j^*+1}{k}} (u-1)^2\,dt+ c \int_{\frac{j^*}{k}}^{\frac{j^*+1}{k}} |u^{(k)}|^2\,dt \\
         & \leq  ck^{2(k-1)}\int_{\frac{j^*}{k}}^{\frac{j^*+1}{k}} W(u)\,dt+ c \int_{\frac{j^*}{k}}^{\frac{j^*+1}{k}} |u^{(k)}|^2\,dt,
    \end{align*}
    therefore, taking into account \eqref{nonlinint1d3},
    \begin{align*}
         z^2 & \leq 4k\int_{\frac{j^*}{k}}^{\frac{j^*+1}{k}} |u^{(k-1)}|^2\,dt \\
         & \leq 4ck^{2k-1}\int_{\frac{j^*}{k}}^{\frac{j^*+1}{k}} W(u)\,dt+ 4ck \int_{\frac{j^*}{k}}^{\frac{j^*+1}{k}} |u^{(k)}|^2\,dt,
    \end{align*}
    which, by the positivity of $W$, proves \eqref{nonlinint1d2} and concludes the proof.
\end{proof}

\begin{corollary}\label{nonlinint1dell}
Let $I\subset \R$ be a bounded, open interval, let $k>1$
be an integer, and assume that {\rm (H2)} is satisfied. Then, for every $\ell\in\{1,...,k-1\}$, there exists a positive constant $q'_{\ell}$ independent of $I$ such that
\begin{equation*}
    q\int_I |u^{(\ell)}|^2\,dt \leq |I|^{-2\ell}\int_I W(u)\,dt+ |I|^{2(k-\ell)}\int_I |u^{(k)}|^2\,dt
\end{equation*}
for every $q<q'_{\ell}$ and $u\in H^k(I)$.
\end{corollary}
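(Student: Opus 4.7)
The plan is to deduce the general statement by downward induction on $\ell$, starting from the base case $\ell=k-1$, which is exactly the content of Lemma \ref{nonlinint1d}. The key observation is that Lemma \ref{nonlinint1d} is valid for any integer $\tilde k>1$; since $H^k(I)\subset H^{\tilde k}(I)$ whenever $\tilde k\leq k$, I can apply it to any $u\in H^k(I)$ with $\tilde k=\ell+1$ to obtain, for some positive constant $a_\ell$ independent of $I$,
\begin{equation*}
a_\ell \int_I |u^{(\ell)}|^2\,dt \leq |I|^{-2\ell}\int_I W(u)\,dt + |I|^2 \int_I |u^{(\ell+1)}|^2\,dt.
\end{equation*}
This supplies the one-step-up estimate used at each level of the induction.

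For the inductive step, I suppose the desired bound has been established at level $\ell+1$ with some positive constant $q''_{\ell+1}$ in place of $q$, and substitute the resulting control on $\int_I |u^{(\ell+1)}|^2\,dt$ into the right-hand side above. A direct check of the exponents shows that the factor $|I|^2$ combines with $|I|^{-2(\ell+1)}$ to produce $|I|^{-2\ell}$ in front of $\int_I W(u)\,dt$, and with $|I|^{2(k-\ell-1)}$ to produce $|I|^{2(k-\ell)}$ in front of $\int_I |u^{(k)}|^2\,dt$, so that after collecting terms I arrive at
\begin{equation*}
a_\ell \int_I |u^{(\ell)}|^2\,dt \leq \Bigl(1+\tfrac{1}{q''_{\ell+1}}\Bigr)|I|^{-2\ell}\int_I W(u)\,dt + \tfrac{1}{q''_{\ell+1}} |I|^{2(k-\ell)} \int_I |u^{(k)}|^2\,dt.
\end{equation*}
Dividing through by the larger of the two resulting prefactors yields the claimed inequality with a new positive constant $q''_\ell$, depending only on $k$, $\ell$, and the finitely many constants $a_\ell,a_{\ell+1},\dots,a_{k-1}$ supplied by Lemma \ref{nonlinint1d}; in particular $q''_\ell$ is independent of $I$, and any $q'_\ell<q''_\ell$ gives the statement.

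I do not foresee any substantial obstacle. The induction performs exactly $k-\ell-1$ steps, and each step multiplies the effective constant by a fixed positive factor, so strict positivity is preserved. The only genuine point to verify is the matching of the powers of $|I|$ at every iteration, which, as noted above, is automatic thanks to the precise scaling exhibited on the right-hand side of Lemma \ref{nonlinint1d}.
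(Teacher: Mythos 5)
Your proof is correct and coincides with the paper's argument, which simply says "the thesis follows by applying iteratively Lemma \ref{nonlinint1d}": the downward induction you set up, applying Lemma \ref{nonlinint1d} at level $\tilde k=\ell+1$ and then substituting the inductive bound for $\int_I|u^{(\ell+1)}|^2\,dt$, is exactly that iteration made explicit. Your verification of the $|I|$-exponents ($|I|^2\cdot|I|^{-2(\ell+1)}=|I|^{-2\ell}$ and $|I|^2\cdot|I|^{2(k-\ell-1)}=|I|^{2(k-\ell)}$) and the observation that each step multiplies the constant by a fixed positive factor are precisely the details left implicit in the paper.
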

\begin{proof}
    The thesis follows by applying iteratively Lemma \ref{nonlinint1d}.
\end{proof}
Now we adapt the interpolation inequality at scale $\e$, in view of its application to functionals \eqref{functionals}.

\begin{proposition} \label{nonlininteps}
Let $I\subset \R$ be a bounded, open interval, let $k>1$
be an integer, and assume that {\rm (H2)} is satisfied. Then, for every $\ell\in\{1,...,k-1\}$, there exists a positive constant $q''_{\ell}$ independent of $I$ such that, for every $\e\in(0,|I|/2)$, it holds
\begin{equation*}
    q\e^{2\ell}\int_I |u^{(\ell)}|^2\,dt \leq \int_I \Bigl[W(u)+ \e^{2k}|u^{(k)}|^2\Bigr]\,dt
\end{equation*}
for every $q<q''_{\ell}$ and $u\in H^k(I)$.
\end{proposition}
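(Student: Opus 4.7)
My plan is to reduce the scaled inequality on $I$ to the unscaled inequality of Corollary \ref{nonlinint1dell} applied on many small subintervals of length comparable to $\e$. The intuition is that Corollary \ref{nonlinint1dell} on a generic interval $J$ reads, up to constants,
\begin{equation*}
q\int_J |u^{(\ell)}|^2\,dt \leq |J|^{-2\ell}\int_J W(u)\,dt + |J|^{2(k-\ell)}\int_J |u^{(k)}|^2\,dt,
\end{equation*}
so if we choose $|J|\simeq \e$, after multiplying by $\e^{2\ell}$ we obtain precisely the form required by the proposition; summing over a partition then yields the global inequality.

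More concretely, first I would exploit the hypothesis $\e<|I|/2$ to set $N:=\lfloor |I|/\e\rfloor$, which satisfies $N\geq 2$, and partition $I$ into $N$ consecutive open subintervals $I_1,\dots,I_N$, each of the same length $h:=|I|/N$. The elementary estimates $\e\leq h<2\e$ follow directly from the definition of the floor function. Then I would apply Corollary \ref{nonlinint1dell} on each $I_j$ with the constant $q'_\ell$ provided there (which is independent of the interval) to get
\begin{equation*}
q'_\ell \int_{I_j} |u^{(\ell)}|^2\,dt \leq h^{-2\ell}\int_{I_j} W(u)\,dt + h^{2(k-\ell)}\int_{I_j} |u^{(k)}|^2\,dt.
\end{equation*}

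Next I would multiply both sides by $\e^{2\ell}$ and use the two comparisons $\e/h\leq 1$ and $h\leq 2\e$ to bound $(\e/h)^{2\ell}\leq 1$ and $\e^{2\ell}h^{2(k-\ell)}\leq 2^{2(k-\ell)}\e^{2k}$, which turns the local bound into
\begin{equation*}
q'_\ell \e^{2\ell}\int_{I_j} |u^{(\ell)}|^2\,dt \leq \int_{I_j} W(u)\,dt + 2^{2(k-\ell)}\e^{2k}\int_{I_j} |u^{(k)}|^2\,dt.
\end{equation*}
Summing over $j\in\{1,\dots,N\}$ and absorbing the factor $2^{2(k-\ell)}$ into the constant by setting $q''_\ell:=q'_\ell/2^{2(k-\ell)}$ (which is positive and independent of $I$) delivers the stated inequality for every $q<q''_\ell$.

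I do not expect any serious obstacle: the result is essentially a scale-invariance/tiling argument on top of Corollary \ref{nonlinint1dell}, and the condition $\e<|I|/2$ is tailored precisely to guarantee that $I$ can be partitioned into at least two pieces of length comparable to $\e$. The only mildly delicate point is to keep track of the correct power of $h$ in the two right-hand terms and to notice that both are uniformly controlled by powers of $\e$ precisely because $h$ and $\e$ are comparable; this is what allows the same constant $q''_\ell$ to work for every $\e\in(0,|I|/2)$ and every bounded interval $I$.
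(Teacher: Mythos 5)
Your proof is correct and follows essentially the same route as the paper: partition $I$ into $\lfloor |I|/\e\rfloor$ subintervals of length $h$ comparable to $\e$, apply the scale-free interpolation estimate of Corollary \ref{nonlinint1dell} on each piece, and absorb the factor arising from $h\simeq\e$ into the constant $q''_\ell$. The only cosmetic difference is that the paper first rescales by $t\mapsto \e t$ and tiles $\tfrac{1}{\e}I$ with pieces of length in $[1,2)$, whereas you tile $I$ directly with pieces of length $h\in[\e,2\e)$; the two are the same argument up to the change of variables, and your version actually avoids an arithmetic slip in the paper's statement of the tile length.
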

\begin{proof}
We set $v(t):=u(\e t)$ for $t\in\frac{1}{\e}I$ so that
\begin{equation*}
    \e^{2\ell-1} \int_I |u^{(\ell)}|^2\,dt = \int_\frac{I}{\e} |v^{(\ell)}|^2\,dt. 
\end{equation*}
Then, we set $n_\e:=\lfloor\frac{|I|}{\e}\rfloor$ and we subdivide the rescaled interval $\frac{1}{\e}I$ in $n_\e$ pairwise disjoint, open intervals $I_\e^j,\, j\in\{1,...,n_\e\}$ each of length $\frac{|I|}{\e n_\e}$.  Then we apply Corollary \ref{nonlinint1dell} on each subinterval $I_\e^j$ with $q<q'_\ell$ and we obtain 
\begin{align*}
    \e^{2\ell-1} \int_I |u^{(\ell)}|^2\,dt & = \sum_{j=1}^{n_\e}  \int_{I_\e^j} |v^{(\ell)}|^2\,dt \\
    &\leq \sum_{j=1}^{n_\e} \frac{1}{q}\Bigl\{|I_\e^j|^{-2\ell} \int_{I_\e^j} W(v)\,dt + |I_\e^j|^{2(k-\ell)}\int_{I_\e^j} |v^{(k)}|^2\,dt\Bigr\} \\
    & \leq \frac{1}{q}\Bigl\{2^{2\ell}\int_{\frac{1}{\e}I} W(v)\,dt + \Bigl(\frac{3}{2}\Bigr)^{2(k-\ell)} \int_{\frac{1}{\e}I} |v^{(k)}|^2\,dt \Bigr\}\\
    & \leq \frac{\max\{2^{2\ell}, (3/2)^{2(k-\ell)}\}}{q} \int_I \Bigl[\frac{W(u)}{\e}+\e^{2k-1}|u^{(k)}|^2\Bigr]\,dt,
\end{align*}
where the second inequality follows by the fact $|I_\e^j|\in (\frac{1}{2}, \frac{3}{2})$ if $\e\in(0,|I|/2)$.

\noindent We multiply the above inequality by $\e$ to obtain that
\begin{equation*}
    \widetilde{q}\e^{2\ell}\int_I |u^{(\ell)}|^2\,dt \leq \int_I \Bigl[W(u)+\e^{2k}|u^{(k)}|^2\Bigr]\,dt
\end{equation*}
for every $\widetilde{q}< q''_\ell$ and $\e\in(0,|I|/2)$, where we set
\begin{equation*}
    q''_\ell:=\frac{q'_\ell}{\max\{2^{2\ell}, (3/2)^{2(k-\ell)}\}};
\end{equation*}
therefore, the proof is complete.
\end{proof}

The following immediate corollary proves Theorem \ref{nonlinintd} in dimension $1$.
\begin{corollary}\label{nonlininepsgen}
Let $\Omega\subset \R$ be the union of finitely many bounded, open, disjoint intervals, let $k>1$
be an integer, and assume that {\rm (H2)} is satisfied. Then, denoting by $|I_o|$ the length of the shortest connected component of $\Omega$, for every $\ell\in\{1,...,k-1\}$ and for every $\e\in(0,|I_o|/2)$ it holds
\begin{equation*}
    q\e^{2\ell}\int_\Omega |u^{(\ell)}|^2\,dt \leq \int_\Omega \Bigl[W(u)+ \e^{2k}|u^{(k)}|^2\Bigr]\,dt
\end{equation*}
for every $q<q''_{\ell}$ and $u\in H^k(\Omega)$.    
\end{corollary}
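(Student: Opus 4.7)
The plan is to reduce the statement to the single-interval case already established in Proposition \ref{nonlininteps} by decomposing $\Omega$ into its connected components and summing the resulting inequalities. Write $\Omega=\bigcup_{j=1}^N I_j$, where $I_1,\dots,I_N$ are the bounded, open, pairwise disjoint intervals making up $\Omega$, with $|I_j|\geq |I_o|$ for every $j$ by the very definition of $|I_o|$. The key observation is that the threshold on $\e$ in Proposition \ref{nonlininteps} is $|I|/2$, and since $\e\in(0,|I_o|/2)\subseteq(0,|I_j|/2)$ for every $j$, the hypothesis is satisfied on every component simultaneously.

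Next, I would use the fact that a function $u\in H^k(\Omega)$ restricts to $u|_{I_j}\in H^k(I_j)$ for each $j$, and that the integrals over $\Omega$ split as
\begin{equation*}
\int_\Omega |u^{(\ell)}|^2\,dt=\sum_{j=1}^N\int_{I_j}|u^{(\ell)}|^2\,dt,\qquad \int_\Omega\bigl[W(u)+\e^{2k}|u^{(k)}|^2\bigr]\,dt=\sum_{j=1}^N\int_{I_j}\bigl[W(u)+\e^{2k}|u^{(k)}|^2\bigr]\,dt,
\end{equation*}
since the $I_j$'s are disjoint and their union is $\Omega$. Fixing $\ell\in\{1,\dots,k-1\}$ and any $q<q''_\ell$, I would then apply Proposition \ref{nonlininteps} componentwise to obtain, for every $j\in\{1,\dots,N\}$,
\begin{equation*}
q\e^{2\ell}\int_{I_j}|u^{(\ell)}|^2\,dt\leq\int_{I_j}\bigl[W(u)+\e^{2k}|u^{(k)}|^2\bigr]\,dt,
\end{equation*}
and summing these $N$ inequalities yields the claimed estimate on $\Omega$.

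There is essentially no obstacle in this argument: the only substantive point is that the constant $q''_\ell$ given by Proposition \ref{nonlininteps} is independent of the interval, so the same admissible $q$ works on every component and the sum is meaningful. This independence is precisely what is emphasized in the statement of Proposition \ref{nonlininteps}. The uniformity in $\e$ across components is likewise guaranteed by choosing the smallest component length $|I_o|$ to define the threshold, which is the reason $|I_o|$ (rather than any particular $|I_j|$) appears in the statement.
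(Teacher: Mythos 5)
Your argument is correct and is essentially the proof the paper has in mind: the paper simply states that the corollary ``follows by an immediate application of Proposition \ref{nonlininteps},'' which is precisely the componentwise application and summation you spell out. The two points you highlight as making the sum work---the interval-independence of $q''_\ell$ and the choice of $|I_o|$ as the uniform threshold for $\e$---are exactly the observations that justify that claim.
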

\begin{proof}
    The proof follows by an immediate application of Proposition \ref{nonlininteps}.
\end{proof}

Finally, we prove the main result of this section by using the argument presented in the proof of Theorem $1.2$ in \cite{ChermisiDalMaso11}. Below, we will make use of notation about slicing that has been introduced in Section $2$.

\begin{proof} [Proof of Theorem \ref{nonlinintd}]
It suffices to prove the part of the statement concerning the operatorial norms, since the other case follows by the equivalence of the norms. We follow the proof of Theorem $1.2$ in \cite{ChermisiDalMaso11}. For $r>0,\,\xi\in\Sd$, and $y\in \Omega^\xi$, we define $\Omega^{\xi,y}_r$ as the finite union of the connected components of the slice $\Omega^{\xi,y}$ having length larger than $r$. Then we set $\Omega(\xi,r):=\{y+t\xi\in \R^d : y\in\Omega^\xi,\, t\in \Omega^{\xi,y}_r\}$ and we observe that $(\Omega(\xi,r))^{\xi,y}=\Omega^{\xi,y}_r$.  

Applying Corollary \ref{nonlininepsgen} we obtain
    \begin{equation*}
        \widetilde{q}\e^{2\ell}\int_{\Omega^{\xi,y}_r} |(u^{\xi,y})^{(\ell)}|^2\,dt \leq \int_{\Omega^{\xi,y}_r} \Bigl[W(u^{\xi,y})+ \e^{2k}|(u^{\xi,y})^{(k)}|^2\Bigr]\,dt
    \end{equation*}
    for all $\widetilde{q}<q''_\ell$ and $\e< r/2$. We integrate the above inequality on $\xi^\perp$ to get
    \begin{equation*}
        \widetilde{q}\e^{2\ell}\int_{\Omega(\xi,r)} |(\nabla^{(\ell)}u)(\xi,...,\xi)|^2\,dx \leq \int_\Omega \Bigl[W(u)+ \e^{2k}\|\nabla^{(k)}u\|_k^2\Bigr]\,dx,
    \end{equation*}
    and setting $A(\xi,r):= \Omega \setminus\Omega(\xi,r)$ we obtain
\begin{equation*}
        \widetilde{q}\e^{2\ell}\int_{\Omega} |(\nabla^{(\ell)}u)(\xi,...,\xi)|^2\,dx  -\widetilde{q}\e^{2\ell}\int_{A(\xi,r)} \|\nabla^{(\ell)}u\|_\ell^2\,dx\leq \int_\Omega \Bigl[W(u)+ \e^{2k}\|\nabla^{(k)}u\|_k^2\Bigr]\,dx.
    \end{equation*}
    Averaging this inequality on $\Sd$ yields
    \begin{align*}
        \e^{2\ell}\frac{\widetilde{q}}{\sigma_{d-1}}\int_{\Sd}\int_{\Omega} |(\nabla^{(\ell)}u)(\xi,...,\xi)|^2\,dx\,d\Hd(\xi) & - \e^{2\ell}\frac{\widetilde{q}}{\sigma_{d-1}}\int_{\Sd}\int_{A(\xi,r)} \|\nabla^{(\ell)}u\|^2\,dx\,d\Hd(\xi) \\
        & \leq \int_\Omega \Bigl[W(u)+ \e^{2k}\|\nabla^{(k)}u\|_k^2\Bigr]\,dx,
    \end{align*}
    where we set $\sigma_{d-1}:=\Hd(\Sd)$.

    By a simple compactness argument, there exists a positive constant $\tau(\ell)$ such that
    \begin{equation*}
        \frac{1}{\sigma_{d-1}}\int_{\Sd} |T(\xi,...,\xi)|^2\,d\Hd(\xi) \geq \tau(\ell)\|T\|^2_\ell
    \end{equation*}
    for every symmetric $\ell$-tensor $T$. Therefore, by Fubini's Theorem, 
    \begin{equation}\label{nonlininepsgen1}
        \frac{1}{\sigma_{d-1}}\int_{\Sd}\int_{\Omega} |(\nabla^{(\ell)}u)(\xi,...,\xi)|^2\,dx\,d\Hd(\xi) \geq \tau(\ell) \int_\Omega \|\nabla^{(\ell)} u\|_\ell^2\,dx.
    \end{equation}

\noindent Again by Fubini's Theorem, we have 
\begin{equation*} 
        \frac{1}{\sigma_{d-1}}\int_{\Sd}\int_{A(\xi,r)} \|\nabla^{(\ell)}u\|_\ell^2\,dx\,d\Hd(\xi) = \frac{1}{\sigma_{d-1}} \int_{\Omega} \|\nabla^{(\ell)}u\|_\ell^2\Hd(D(x,r))\,dx,
    \end{equation*}
where $D(x,r):=\{\xi\in \Sd: x\in A(\xi,r)$\}; hence, to conclude the proof, we have to select $r$ small enough so that $\Hd(D(x,r))$ is small uniformly in $x$. Given $x\in \partial \Omega$, we let $C_R(x)$ denote the cylinder having height $2R$ and radius $R$ with axis parallel to the normal $\nu(x)$ to $\partial \Omega$ at $x$.

\noindent Let $q<\tau(\ell)q''_\ell$, and fix $\widetilde{q}$ such that $\frac{q}{\tau(\ell)}<\widetilde{q}< q''_\ell$. Fix $\eta=\eta(q)>0$ such that 
    \begin{equation}\label{choiceq}
        q< \widetilde{q}(\tau(\ell)-\eta);
    \end{equation}
since $\Omega$ has $C^1$ boundary, there exists $R$ such that, for every $x\in \partial \Omega$, it holds $C_R(x)\cap \partial\Omega$ is the graph of a $C^1$ function with respect to the base of the cylinder, and moreover,
\begin{equation*}
        \Hd(\{\xi\in\Sd:\xi\in T_{\partial\Omega}(y),\, y\in C_R(x)\cap \partial\Omega\}) \leq \eta\sigma_{d-1}.
    \end{equation*}
Employing the same argument presented in the proof of Theorem $1.2$ in \cite{ChermisiDalMaso11}, it holds that,
\begin{equation*}
    D(x,r)\subseteq \{\xi\in\Sd:\xi\in T_{\partial\Omega}(y),\, y\in C_R(x)\cap \partial\Omega \}
\end{equation*}
up to choosing $r<R/2$; as a consequence, $\Hd(D(x,r))\leq \eta$ for all $x\in\partial\Omega$ if $r<R/2$, and then
\begin{equation}\label{nonlininepsgen2} 
        \frac{1}{\sigma_{d-1}}\int_{\Sd}\int_{A(\xi,r)} \|\nabla^{(\ell)}u\|_\ell^2\,dx\,d\Hd(\xi) \leq  \eta \int_{\Omega} \|\nabla^{(\ell)}u\|_\ell^2\,dx,
    \end{equation}
for all $r<R/2$.

\noindent Gathering \eqref{nonlininepsgen1} and \eqref{nonlininepsgen2}, we obtain that
    \begin{equation}\label{nonlininepsgen3}
        \widetilde{q}(\tau(\ell)-\eta)\e^{2\ell}\int_{\Omega} \|\nabla^{(\ell)}u\|_\ell^2\,dx \leq \int_\Omega \Bigl[W(u)+ \e^{2k}\|\nabla^{(k)}u\|_k^2\Bigr]\,dx
    \end{equation}
    for all $\widetilde{q}<q''_\ell$ and $\e< R/2$, where $R=R(\eta,\Omega)$.
    
\noindent Recalling \eqref{choiceq}, by \eqref{nonlininepsgen3} we get
    \begin{equation*}
        q\e^{2\ell}\int_{\Omega} \|\nabla^{(\ell)}u\|_\ell^2\,dx \leq \int_\Omega \Bigl[W(u)+ \e^{2k}\|\nabla^{(k)}u\|_k^2\Bigr]\,dx.
    \end{equation*}
    The above inequality is valid for all $q<\tau(\ell)q''_\ell$ and $\e< R(\eta, \Omega)/2$.
    Then, the thesis holds with $q^*_\ell:=\tau(\ell)q''_\ell$ and $\e_0:=R(\eta, \Omega)/2$, which in turn, since $\eta$ depends on $q$, only depends on $q$ and $\Omega$.
 \end{proof}

\begin{remark}\label{re:uniform interpolation}
    {\rm We are not able to prove the interpolation inequality on an open cube. The same slicing arguments employed in \cite{ChermisiDalMaso11} cannot be applied in our case because of the high order of the derivatives that we want to estimate. This complicates our arguments in view of the proof of Theorem \ref{main theorem} since, in what follows, it will be important to apply Theorem \ref{nonlinintd} on a class of $C^1$ sets $\{\Omega_{\rho}:\rho\in(\rho_1,\rho_2)\}$ for some $\rho_1,\rho_2>0$ for which the threshold $\e_0=\e_0(q, \Omega_\rho)$ can be chosen independently of $\rho\in (\rho_1,\rho_2)$. Inspecting the proof of Theorem \ref{nonlinintd}, this amounts to prove that for each properly fixed $q$, given $\eta>0$ such that
    \begin{equation*}
        q< \widetilde{q}(\tau(\ell)-\eta),
    \end{equation*}
    there exists $R>0$ such that, for every $\rho\in(\rho_1,\rho_2)$ and for every $x\in \partial \Omega_\rho$, it holds that
    \begin{equation}\label{graphcondition}
        C_R(x)\cap \partial\Omega_\rho\,\,\text{ is the graph of a } C^{1} \text{ function}    
        \end{equation}
    and
    \begin{equation}\label{uniforminterpolation}
        \Hd(\{\xi\in\Sd:\xi\in T_{\partial\Omega_\rho}(y),\, y\in C_R(x)\cap \partial\Omega_\rho \}) \leq \eta\Hd(\Sd).
    \end{equation}
We claim that the above conditions are satisfied if we consider $\Omega$ a bounded, open set with $C^{2}$ boundary, and  
    \begin{equation*}
        \Omega_\rho:=\{x\in \Omega : \text{dist}(x,\partial\Omega)<\rho\}
    \end{equation*}
    for $\rho\in(\rho^*,\rho^*+R^*)$, where $\rho^*>0$ is fixed and $R^*=R^*(\rho^*)>0$. In particular,  we fix $\rho^*$ small enough so that $\{x\in\Omega : \text{dist}(x,\partial \Omega)=\rho\}$ is a $(d-1)$-manifold of class $C^1$ and there exists a $C^1$ projection from $\Omega_{\rho}$ onto $\partial \Omega$ for every $\rho\leq 2\rho^*$.  
    
    Let $\eta '=\eta '(\eta,d)$ be such that 
    \begin{equation}\label{etaprimo}
        \Hd\Bigl(\Bigl\{ \xi\in \Sd : |\xi_d| \leq \eta ' \Bigr\}\Bigr) \leq \eta \Hd(\Sd).
    \end{equation}
Since $\partial \Omega$ is of class $C^1$, for every $x\in \partial \Omega$ there exist $R>0$ and a $C^1$ function $\varphi$ defined on the base of $C_R(x)$ such that 
\begin{equation*}
        C_R(x)\cap\partial \Omega = C_R(x)\cap \text{graph }\varphi .
    \end{equation*}
By a rotation argument, we can further assume that $\nabla \varphi(x)=0$, and upon taking a smaller $R$, that 
\begin{equation*}
    \|\nabla \varphi\|_{L^\infty(C_R(x))}\leq \eta'.
\end{equation*}
Moreover, since $\partial\Omega$ is compact, we can select $R$ as above independent of $x\in\partial \Omega$.   

Since $\partial\Omega_{\rho^*}$ is of class $C^1$ and compact, using the same argument, we find a possibly smaller $R$ with the property that for all $x\in\partial\Omega_{\rho^*}\cap \Omega$ there exists a $C^1$ function $\varphi_{\rho^*}$ defined on the base of $C_R(x)$ such that 
    \begin{equation*}
        C_R(x)\cap\partial \Omega_{\rho^*} = C_R(x)\cap \text{graph }\varphi_{\rho^*}  \quad \text{ with } \quad  \|\nabla \varphi_{\rho^*}\|_{L^\infty(C_R(x))}\leq \eta'.
    \end{equation*}
Finally, we assume without loss of generality that $R<\rho^*$.

Now, let $x\in\partial \Omega_{\rho^*}\cap \Omega$ and $0<r<R/4$ and note that
    \begin{equation*}
         \{y\in C_{R-r}(x) : \text{dist}(y, \text{ graph }\varphi_{\rho^*})=r\} = C_{R-r}(x)\cap \partial \Omega_{\rho^*+r}.
    \end{equation*}
Then, letting $x_r\in\partial\Omega_{\rho^*+r}$ denote the unique point such that $|x-x_r|=r$, we have that $C_{R-2r}(x_r)\subset C_{R-r}(x)$, which implies
\begin{equation*}
         \{y\in C_{R-2r}(x_r) : \text{dist}(y, \text{ graph }\varphi_{\rho^*})=r\} = C_{R-2r}(x_r)\cap \partial \Omega_{\rho^*+r}.
    \end{equation*}
We also note that there exists a $C^1$ function $\varphi_{\rho^*+r}$ such that 
    \begin{equation*}
        \{y\in C_{R-2r}(x_r) : \text{dist}(y, \text{ graph }\varphi_{\rho^*})=r\} =C_{R-2r}(x_r) \cap \text{ graph }\varphi_{\rho^*+r},
    \end{equation*}
    and
    \begin{equation*}
    \|\nabla \varphi_{\rho^*+r}\|_{L^\infty(C_{R-2r}(x_r))} \leq \|\nabla \varphi_{\rho^*}\|_{L^\infty(C_{R-2r}(x_r))} \leq \|\nabla \varphi_{\rho^*}\|_{L^\infty(C_{R}(x))}. 
    \end{equation*}
Recalling that there exists a $C^1$ projection from $\Omega_{\rho}$ onto $\partial \Omega$  for every $\rho\in(\rho^*, \rho^*+R/4)$, the above observations prove that for every $\rho\in(\rho^*, \rho^*+R/4)$ and for every $x\in\partial\Omega_\rho$, it holds that
    \begin{equation*}
        C_{R-2r}(x)\cap\partial\Omega_\rho = C_{R-2r}(x)\cap \text{graph }\varphi_\rho,
    \end{equation*}
    where $\varphi_\rho$ is a $C^1$ function such that 
    \begin{equation*}
    \|\nabla \varphi_\rho\|_{L^\infty(C_{R-2r}(x))} \leq \|\nabla \varphi_{\rho^*}\|_{L^\infty(C_{R-2r}(x))} \leq\eta'. 
    \end{equation*}
As $r$ ranges in $(0, R/4)$, in particular we obtain
 \begin{equation}\label{graphs}
        C_{R/2}(x)\cap\partial\Omega_\rho = C_{R/2}(x)\cap \text{graph }\varphi_\rho
    \end{equation}
    and 
    \begin{equation}\label{lipbound3}
    \|\nabla \varphi_\rho\|_{L^\infty(C_{R/2}(x))} \leq \|\nabla \varphi_{\rho^*}\|_{L^\infty(C_{R/2}(x))} \leq \eta'. 
    \end{equation}
\noindent At this point, we conclude that our claim is proved with $R^*=R/4$. Indeed, \eqref{graphcondition}
follows by \eqref{graphs}, and we are left with proving that \eqref{lipbound3} implies \eqref{uniforminterpolation}. To see this, note that for fixed $\rho\in(\rho^*, \rho^*+R/4)$ and $x\in \partial \Omega_\rho$, \eqref{lipbound3} implies that 
\begin{align*}
    & \hspace{-1 cm}\{\xi\in\Sd:\xi\in T_{\partial\Omega_\rho}(y),\, y\in C_{R/2}(x)\cap \partial\Omega_\rho \} \\
    & \subseteq \Bigl\{\xi\in\Sd: |\xi_d|=|\nabla\varphi_\rho(y_1,...,y_{d-1})\cdot(\xi_1,...,\xi_{d-1})|,\,  y\in C_{R/2}(x)\cap \partial\Omega_{\rho} \Bigr\} \\
    & \subseteq \Bigl\{\xi\in \Sd : |\xi_d|\leq \eta'\Bigr\};
\end{align*}
therefore, by \eqref{etaprimo} we conclude that
\begin{equation*}
    \Hd(\{\xi\in\Sd:\xi\in T_{\partial\Omega_\rho}(y),\, y\in C_{R/2}(x)\cap \partial\Omega_\rho \}) \leq \eta\Hd(\Sd).
\end{equation*}
Being the above estimates uniform in $\rho$ and $x$, \eqref{uniforminterpolation} is satisfied. 
    }
\end{remark}

The following corollary will come into play in the forthcoming sections. 
 \begin{corollary}\label{interpolationwithnorms}  Let $\Omega\subset \R^d$ be a bounded, open set with $C^1$ boundary, let $k>1$ be an integer, and let $\mathcal{N}:=\{\ell\in\{1,...,k-1\}: q_\ell \leq 0\}$. Assume that $\mathcal{N}\neq \emptyset$ and that {\rm (H2)} is satisfied. There exist positive constants $\{\overline{q}_\ell : \ell\in\mathcal{N}\}$ independent of $\Omega$ such that if $q_\ell>-\overline{q}_\ell$ for every $\ell\in\mathcal{N}$ there exist positive constants $\e_0=\e_0(\{q_\ell : \ell\in\mathcal{N}\},\Omega)$ and $\delta=\delta(\{q_\ell : \ell\in\mathcal{N}\})$ such that it holds
\begin{equation*}
     \int_\Omega \Bigl[\frac{1}{\e}W(u)+\sum_{\ell=1}^kq_\ell\e^{2\ell-1}|\nabla^{(\ell)}u|_\ell^2\Bigr]\,dx \geq \delta \int_\Omega \Bigl[\frac{1}{\e}W(u)+\sum_{\ell=1}^k\e^{2\ell-1}|\nabla^{(\ell)}u|_\ell^2\Bigr]\,dx
\end{equation*}
for every $\e\in(0,\e_0)$ and $u\in H^k(\Omega)$. In particular, letting $\widetilde{q}^*_1,...,\widetilde{q}^*_{k-1}$ denote the same positive constants appearing in Theorem \ref{nonlinintd} and given $\{\alpha_\ell : \ell\in \mathcal{N}\}$ such that $\alpha_\ell>0$ for all $\ell\in\mathcal{N}$ and $\sum_{\ell\in\mathcal{N}}\alpha_\ell=1$, it holds that $\overline{q}_\ell=\alpha_\ell \widetilde{q}^*_\ell$ for all $\ell\in\mathcal{N}$.
\end{corollary}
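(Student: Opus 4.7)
The plan is to use Theorem \ref{nonlinintd} to control each negative contribution $|q_\ell| \e^{2\ell-1} \int_\Omega |\nabla^{(\ell)}u|_\ell^2\,dx$ for $\ell \in \mathcal{N}$ by a fraction $\tau_\ell \alpha_\ell < \alpha_\ell$ of the ``safe'' part $\int_\Omega [\tfrac{1}{\e}W(u) + \e^{2k-1}|\nabla^{(k)}u|_k^2]\,dx$, so that summing over $\ell \in \mathcal{N}$ and exploiting the condition $\sum_{\ell\in\mathcal{N}}\alpha_\ell = 1$ leaves a definite positive remainder in front of the safe part.

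First, fix $\ell \in \mathcal{N}$. By hypothesis $|q_\ell| < \alpha_\ell \widetilde{q}^*_\ell$, so the value $q^*_\ell := \tfrac{1}{2}\bigl(|q_\ell|/\alpha_\ell + \widetilde{q}^*_\ell\bigr)$ satisfies $|q_\ell|/\alpha_\ell < q^*_\ell < \widetilde{q}^*_\ell$. Inequality \eqref{interpolation2} applied with $q = q^*_\ell$ gives, after dividing by $\e$ and multiplying by $\alpha_\ell$, a threshold $\e_0(q^*_\ell, \Omega) > 0$ such that for every $\e \in (0, \e_0(q^*_\ell,\Omega))$,
\begin{equation*}
    |q_\ell| \e^{2\ell-1} \int_\Omega |\nabla^{(\ell)}u|_\ell^2\,dx \leq \tau_\ell \alpha_\ell \int_\Omega \Bigl[\frac{W(u)}{\e} + \e^{2k-1} |\nabla^{(k)}u|_k^2\Bigr]\,dx,
\end{equation*}
where $\tau_\ell := |q_\ell|/(\alpha_\ell q^*_\ell) \in [0, 1)$. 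Setting $\tau := \max_{\ell \in \mathcal{N}} \tau_\ell < 1$ and taking $\e$ less than the minimum of the finitely many thresholds, summation over $\mathcal{N}$ and the relation $\sum_{\ell \in \mathcal{N}} \alpha_\ell = 1$ yield
\begin{equation*}
    \sum_{\ell \in \mathcal{N}} |q_\ell| \e^{2\ell-1} \int_\Omega |\nabla^{(\ell)}u|_\ell^2\,dx \leq \tau \int_\Omega \Bigl[\frac{W(u)}{\e} + \e^{2k-1} |\nabla^{(k)}u|_k^2\Bigr]\,dx.
\end{equation*}

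Second, splitting the sum in \eqref{functionals} into $\ell \in \mathcal{N}$ and $\ell \notin \mathcal{N}$, using $q_k = 1$, and subtracting the above negative contribution produces
\begin{equation*}
    \int_\Omega\Bigl[\frac{W(u)}{\e} + \sum_{\ell=1}^k q_\ell \e^{2\ell-1} |\nabla^{(\ell)}u|_\ell^2\Bigr]dx \geq \mu \int_\Omega \frac{W(u)}{\e}dx + \mu \sum_{\ell \notin \mathcal{N}} \e^{2\ell-1}\int_\Omega |\nabla^{(\ell)}u|_\ell^2\,dx,
\end{equation*}
with $\mu := \min\bigl\{1-\tau,\, \min_{\ell \notin \mathcal{N}} q_\ell\bigr\} > 0$, which depends only on $\{q_\ell\}$ and $\tau$.

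Third, to incorporate the missing indices $\ell \in \mathcal{N}$ on the right-hand side, apply Theorem \ref{nonlinintd} once more with some fixed $q \in (0, \min_{\ell \in \mathcal{N}} \widetilde{q}^*_\ell)$: this gives $\sum_{\ell \in \mathcal{N}} \e^{2\ell-1} \int_\Omega |\nabla^{(\ell)}u|_\ell^2\,dx \leq C \int_\Omega [\tfrac{W(u)}{\e} + \e^{2k-1}|\nabla^{(k)}u|_k^2]\,dx$ for a constant $C = |\mathcal{N}|/q$ and small enough $\e$. Combining this with the previous display and choosing $\delta := \mu/(1+C)$ yields the desired estimate, after again taking $\e_0$ to be the minimum of finitely many thresholds produced by Theorem \ref{nonlinintd} (all of which depend only on $\{q_\ell\}$ and $\Omega$). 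The only real obstacle is bookkeeping the dependence on $\Omega$, $\{q_\ell\}$ of the various constants and thresholds; since only finitely many invocations of Theorem \ref{nonlinintd} are made and each $\widetilde{q}^*_\ell$ is independent of $\Omega$, the resulting $\overline{q}_\ell = \alpha_\ell \widetilde{q}^*_\ell$ and $\delta$ inherit the claimed independence from $\Omega$.
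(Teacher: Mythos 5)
Your proof is correct and reaches the same constants $\overline{q}_\ell=\alpha_\ell\widetilde{q}^*_\ell$ and $\Omega$-independent $\delta$, but it organizes the algebra a bit differently from the paper. The paper's proof decomposes, for each $\ell\in\mathcal{N}$, the quantity $\alpha_\ell W/\e + q_\ell\e^{2\ell-1}|\nabla^{(\ell)}u|_\ell^2 + \alpha_\ell\e^{2k-1}|\nabla^{(k)}u|_k^2$ as a convex combination $(1-\delta)(\cdots)+\delta(\cdots)$, where the first bracket is nonnegative by a single application of Theorem \ref{nonlinintd}, so the term $\e^{2\ell-1}|\nabla^{(\ell)}u|_\ell^2$ already appears with a fixed weight $\delta$ on the right-hand side of the resulting inequality; no further work is needed to reinstate the $\mathcal{N}$-indexed derivatives. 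Your version instead performs two passes through the interpolation inequality: first to absorb the negative contributions into a fraction $\tau<1$ of the ``safe'' budget $\int[W/\e + \e^{2k-1}|\nabla^{(k)}u|_k^2]\,dx$, obtaining an estimate whose right-hand side is missing the $\ell\in\mathcal{N}$ terms; and then a second pass to bound those missing terms by a constant multiple $C$ of the safe budget, absorbing the overhead into $\delta := \mu/(1+C)$. Both routes rest entirely on Theorem \ref{nonlinintd} and the $\alpha_\ell$-splitting; yours is one step longer, but it makes the handling of the $\ell\notin\mathcal{N}$ coefficients via $\mu=\min\{1-\tau,\min_{\ell\notin\mathcal{N}}q_\ell\}$ more explicit than the paper's \eqref{interpolationwithnorms2}, which only states a triviality and tacitly requires a further shrinkage of $\delta$ to cover the case $q_\ell<1$.
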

 \begin{proof}
 By Theorem \ref{nonlinintd}, for every $\ell\in\{1,...,k-1\}$ there exists a positive constant $\widetilde{q}^*_\ell$ independent of $\Omega$ such that for every $q > - \widetilde{q}^*_\ell$ there exists $\e_0=\e_0(q,\Omega)$ such that it holds
\begin{equation}\label{interpolaitonfunctional}
    \int_\Omega \Bigl[\frac{1}{\e}W(u) + \e^{2k-1}|\nabla^{(k)}u|_k^2\Bigr]\,dx+q\int_\Omega \e^{2\ell-1}|\nabla^{(\ell)}u|_\ell^2\,dx \geq 0
\end{equation}
for every $\e\in(0,\e_0)$ and $u\in H^k(\Omega)$.

Let $\mathcal{N}:=\{\ell\in\{1,...,k-1\}: q_\ell \leq 0\}$ and consider $\{\alpha_\ell : \ell\in \mathcal{N}\}$ such that $\alpha_\ell>0$ for all $\ell\in\mathcal{N}$ and $\sum_{\ell\in\mathcal{N}}\alpha_\ell=1$. Suppose that $q_\ell>-\alpha_\ell\widetilde{q}^*_\ell$ for every $\ell\in\mathcal{N}$ and let $\delta$ be a positive real number smaller than $1$ such that
\begin{equation}\label{conditiondelta}
    \frac{1}{\alpha_\ell}\frac{q_\ell-\delta}{1-\delta} > - \widetilde{q}^*_\ell
\end{equation}
for all $\ell\in\mathcal{N}$. Note that
\begin{align*}
    & \alpha_\ell\frac{W(u)}{\e}+q_\ell\e^{2\ell-1}|\nabla^{(\ell)}u|_\ell + \alpha_   \ell\e^{2k-1} |\nabla^{(k)}u|_k \\
    & = (1-\delta)\Bigl(\alpha_\ell\frac{W(u)}{\e}+\frac{q_\ell-\delta}{1-\delta}\e^{2\ell-1}|\nabla^{(\ell)}u|_\ell + \alpha_\ell\e^{2k-1} |\nabla^{(k)}u|_k \Bigr) \\
    & \,\,\, \quad + \delta\Bigl(\alpha_\ell\frac{W(u)}{\e}+\e^{2\ell-1}|\nabla^{(\ell)}u|_\ell + \alpha_\ell\e^{2k-1}|\nabla^{(k)}u|_k \Bigr).
\end{align*}
We integrate the above equality and, taking into account \eqref{conditiondelta}, we apply \eqref{interpolaitonfunctional} with $q=\frac{1}{\alpha_\ell}\frac{q_\ell-\delta}{1-\delta} $ to obtain
\begin{align} \notag
    & \int_{\Omega}\Bigl[\alpha_\ell\frac{W(u)}{\e}+q_\ell\e^{2\ell-1}|\nabla^{(\ell)}u|_\ell + \alpha_\ell\e^{2k-1} |\nabla^{(k)}u|_k\Bigr]\,dx \\ \label{interpolationwithnorms1}
    & \geq \delta \int_{\Omega}\Bigl[\alpha_\ell\frac{W(u)}{\e}+\e^{2\ell-1}|\nabla^{(\ell)}u|_\ell + \alpha_\ell\e^{2k-1} |\nabla^{(k)}u|_k\Bigr]\,dx
\end{align}
for every $\ell\in\mathcal{N},\, \e<\e_0(q_\ell,\Omega)$.

Clearly, for $\ell\notin \mathcal{N}$ the coefficient $q_\ell$ is strictly positive; hence it holds 
\begin{equation}\label{interpolationwithnorms2}
    \int_\Omega q_\ell\e^{2\ell-1} |\nabla^{(\ell)} u|^2_\ell\, dx \geq \delta \int_\Omega q_\ell\e^{2\ell-1} |\nabla^{(\ell)} u|^2_\ell\, dx
\end{equation}
since $\delta<1$. Therefore, summing over $\ell\in\{1,...,k-1\}$ and gathering \eqref{interpolationwithnorms1} and \eqref{interpolationwithnorms2}, upon considering a smaller $\e_0$ depending on $\{q_\ell:\ell\in\mathcal{N}\}$ and $\Omega$, we get
\begin{equation*}
     \int_\Omega \Bigl[\frac{W(u)}{\e}+\sum_{\ell=1}^kq_\ell\e^{2\ell-1}|\nabla^{(\ell)}u|_\ell^2\Bigr]\,dx  \geq \delta \int_\Omega \Bigl[\frac{W(u)}{\e}+\sum_{\ell=1}^k\e^{2\ell-1}|\nabla^{(\ell)}u|_\ell^2\Bigr]\,dx,
\end{equation*}
for every $\e<\e_0$, completing the proof.
\end{proof}

\begin{remark}\label{remarkcorollary}
  {\rm The above statement clearly holds (for every $\e$) if $\mathcal{N}= \emptyset$ with $\delta:=\min\{q_\ell:\ell\in\{1,...,k\}\}$, which is strictly positive.}
\end{remark} 
    
 \section{Compactness}

In this section we prove Theorem \ref{compactness}. First, we summarize in a single statement the main results contained in \cite{BDS}. 

\begin{theorem}\label{BDS1dim} Let $\{E^1_\e\}_\e$ be defined as in \eqref{functionalBDS1d} on a bounded, open interval $I\subset \R$, let $\{\e_n\}_n$ be a positive sequence converging to $0$, and assume that $W:\R\to[0,+\infty)$ is a continuous function such that the following conditions are satisfied:
\begin{itemize}
    \item[{\rm (H1)}] $W(s)=0$ if and only if $s\in\{-1,1\}$;
    \item[{\rm (${\rm \widetilde{H}}$2)}] $W(s)\geq \alpha_W\min\{(s+1)^2, (s-1)^2,\beta_W \}$ for every $s\in\R$ and for some $\alpha_W,\, \beta_W$ positive.
\end{itemize} 
If $\{u_n\}_n\subset H^k(I)$ is a sequence such that $\sup_n E^1_{\e_n}(u_n, I)<+\infty$, then there exist a subsequence $\{u_{n_j}\}_j$ and a function $u\in BV(I ;\{-1,1\})$ such that $u_{n_j}\to u$ in measure as $j\to+\infty$. Moreover, the family of functionals $\{E^1_\e\}_\e$ $\Gamma$-converges with respect to the convergence in measure to the functional $E_0^1$ given by
     \begin{equation}\label{BDS1dimlimit}
         E_0^1(u,I):= \begin{cases} 
 m_k \#(S(u) \cap I)  & \text{ if } u\in BV(I;\{-1,1\}), \\
     +\infty & \text{ if } u\in L^2(I)\setminus BV(I;\{-1,1\}),
\end{cases}
     \end{equation}
     where $m_k$ is given by \eqref{minBDS}.
\end{theorem}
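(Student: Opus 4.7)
The plan is to follow the strategy of \cite{BDS}, treating the compactness and the $\Gamma$-convergence assertions separately. For the compactness I would adapt Solci's argument from \cite{Solci}, which rests on \emph{local} interpolation inequalities on intervals: on a subinterval $J\subset I$ of length comparable to $\e$, one can estimate $\int_J |u_\e^{(\ell)}|^2$ for $\ell\in\{1,\dots,k-1\}$ in terms of $\int_J W(u_\e)$ and $\int_J |u_\e^{(k)}|^2$. Since ($\widetilde{\rm H}2$) forces $W(u_\e)$ to be small precisely where $u_\e$ is close to $\pm 1$, these estimates allow one to detect short \emph{transition intervals} $J_i^\e\subset I$ on which $u_\e$ moves from a neighborhood of one well to the other and at whose endpoints all intermediate derivatives are small. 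On each such interval, a rescaling $v(t):=u_\e(x_i+\e t)$ turns the energy contribution into at least $m_k(\eta,T)$, the constrained minimum of the one-dimensional optimal-profile problem; optimizing in $\eta,T$ yields the unconstrained value $m_k$ in \eqref{minBDS}, which is strictly positive by Fatou/lower-semicontinuity. Equi-boundedness of $E^1_{\e_n}(u_n,I)$ then forces a uniform bound on the number of transition intervals, hence precompactness in measure and a limit $u\in BV(I;\{-1,1\})$.

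For the $\Gamma$-liminf inequality I would start from a sequence $u_n\to u$ in measure with $\sup_n E^1_{\e_n}(u_n,I)<+\infty$ and reuse the decomposition above: each transition interval of $u_n$ concentrates around a point of $S(u)$, and its energy contribution is at least $m_k(\eta,T)$ up to a vanishing error. Summing over transitions and then letting $\eta\to 0$, $T\to+\infty$ along a suitable diagonal gives
\[
\liminf_{n\to+\infty} E^1_{\e_n}(u_n,I)\ \ge\ m_k\,\#(S(u)\cap I),
\]
matching \eqref{BDS1dimlimit}.

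For the $\Gamma$-limsup inequality the construction is explicit: fix $u\in BV(I;\{-1,1\})$ with $S(u)\cap I=\{x_1,\dots,x_N\}$ and, given $\delta>0$, pick an almost-minimizer $v_\delta\in H^k_{\mathrm{loc}}(\R)$ for the problem defining $m_k$ with $v_\delta(t)\to\pm 1$ as $t\to\pm\infty$. On a neighbourhood of each $x_i$ of size $\e R_\delta$ set
\[
u_\e(t)\ :=\ \sigma_i\,v_\delta\!\Bigl(\frac{t-x_i}{\e}\Bigr),
\]
where $\sigma_i\in\{-1,+1\}$ is chosen according to the orientation of the jump; outside these neighbourhoods, glue to the constant values $\pm 1$ using a cut-off that is $H^k$-small because $v_\delta$ is already exponentially close to $\pm 1$ at its far ends. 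A direct change of variables shows $E^1_{\e}(u_\e,I)\to\#(S(u)\cap I)\,(m_k+O(\delta))$, and a diagonal argument in $(\e,\delta)$ produces the recovery sequence.

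The main obstacle is the first step, namely the positivity $m_k>0$ together with the fact that local interpolation inequalities produce genuine transition intervals contributing at least $m_k-o(1)$. Positivity of $m_k$ follows from a compactness/Fatou argument applied to a minimizing sequence once one knows, via the interpolation inequality of Theorem~\ref{nonlinintd} localized to intervals, that such a sequence is bounded in $H^k_{\mathrm{loc}}(\R)$; the full decomposition into transition intervals then requires a careful covering argument à la \cite{Solci}, which is the technically heaviest part and the reason assumption ($\widetilde{\rm H}2$) (which controls the behaviour of $W$ far from the wells) is imposed in this one-dimensional statement.
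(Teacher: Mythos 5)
The statement you are proving is not actually proved in this paper: Theorem~\ref{BDS1dim} is introduced with the sentence ``we summarize in a single statement the main results contained in \cite{BDS}'' and is simply cited, so there is no internal proof to compare against, only the description of the \cite{BDS} strategy given in the introduction. Your sketch matches that description in its broad architecture (Solci-type local interpolation, detection of transition intervals, $m_k(\eta,T)\to m_k$, equi-bounded number of transitions, blow-up for the liminf, explicit rescaled profile for the limsup), which is the right picture.

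There are, however, two places where your write-up would go wrong if filled in as stated. First, you invoke ``the interpolation inequality of Theorem~\ref{nonlinintd} localized to intervals'' to get $H^k_{\rm loc}$-bounds and positivity of $m_k$, but Theorem~\ref{nonlinintd} (and its 1D ancestor Lemma~\ref{nonlinint1d}) is proved under the \emph{stronger} hypothesis (H2), which prescribes quadratic growth of $W$ at infinity, whereas Theorem~\ref{BDS1dim} only assumes $(\widetilde{\rm H}2)$. The whole point of the \cite{BDS} argument, as emphasized in the introduction, is that it needs \emph{no} growth condition at infinity: the tool is the purely linear $L^2$-interpolation inequality of Theorem~\ref{Leoni1d} applied on carefully chosen short intervals where $u_\e$ is guaranteed (by the energy and $(\widetilde{\rm H}2)$) to stay near the wells, not the nonlinear inequality of Theorem~\ref{nonlinintd}. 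Moreover, $H^k_{\rm loc}$-precompactness of a minimizing sequence for $m_k$ by itself does not give $m_k>0$; you still have to rule out energy escaping to infinity, and in \cite{BDS} this is achieved through the auxiliary constrained problems $m_k(\eta,T)$ with prescribed smallness of the intermediate derivatives at the endpoints. Second, in the limsup step you assert that an almost-minimizer for $m_k$ is ``exponentially close to $\pm 1$ at its far ends''; there is no reason for an almost-minimizer (or even a minimizer) to decay exponentially under only (H1), $(\widetilde{\rm H}2)$. What one actually does is choose the almost-minimizer to already equal $\pm 1$ outside a bounded interval (this costs an arbitrarily small amount of extra energy by the cut-off argument used in the final section of the present paper), after which the gluing is exact and no decay estimate is needed.
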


By strengthening the assumptions on $W$, we improve the result concerning compactness stated above. 

\begin{corollary}\label{BDS1dimcorollary} 
    Let $\{E^1_\e\}_\e$ be defined as in \eqref{functionalBDS1d} on a bounded, open set $I\subset \R$, let $\{\e_n\}_n$ be a positive sequence converging to $0$, and assume that {\rm (H1)} and {\rm(H2)} are satisfied.
If $\{u_n\}_n\subset H^k(I)$ is a sequence such that $\sup_n E^1_{\e_n}(u_n, I)<+\infty$, then there exist a subsequence $\{u_{n_j}\}_j$ and a function $u\in BV(I ;\{-1,1\})$ such that $u_{n_j}\to u$ in $L^2(I)$ as $j\to+\infty$. Moreover, the family of functionals $\{E^1_\e\}_\e$ $\Gamma$-converges with respect to the convergence in $L^2(I)$ to the functional $E_0^1$ defined in \eqref{BDS1dimlimit}.
\end{corollary}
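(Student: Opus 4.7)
The plan is to upgrade the conclusions of Theorem \ref{BDS1dim} from convergence in measure to convergence in $L^2(I)$, exploiting the quadratic growth at infinity built into (H2), which is strictly stronger than the corresponding growth assumption used in Theorem \ref{BDS1dim}.

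For the compactness assertion, I would first observe that $\sup_n E^1_{\e_n}(u_n, I) < +\infty$ forces $\int_I W(u_n)\, dt \le C\e_n \to 0$. Assumption (H2) supplies the pointwise bound $\min\{(s+1)^2,(s-1)^2\} = (|s|-1)^2 \ge s^2/4$ whenever $|s|\ge 2$, and hence
\[\int_{\{|u_n|\ge 2\}} u_n^2\, dt \le 4\int_I W(u_n)\, dt \to 0.\]
Theorem \ref{BDS1dim} then produces a subsequence $u_{n_j}\to u$ in measure with $u \in BV(I;\{-1,1\})$. Decomposing $u_{n_j} = u_{n_j}\chi_{\{|u_{n_j}|\le 2\}} + u_{n_j}\chi_{\{|u_{n_j}|>2\}}$, the first summand is bounded by $2$ and still converges in measure to $u$ (since $|u|=1$ makes the truncation inactive on the set where $|u_{n_j}-u|<1$), hence it converges to $u$ in $L^2(I)$ by dominated convergence; the second summand has vanishing $L^2$-norm by the display. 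This yields $u_{n_j}\to u$ in $L^2(I)$.

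For the $\Gamma$-convergence statement, the liminf inequality with respect to $L^2(I)$ is immediate from the one in measure given by Theorem \ref{BDS1dim}, because $L^2$-convergence on the bounded set $I$ implies convergence in measure. For the limsup inequality, I would employ the recovery sequence already constructed in \cite{BDS}, built by rescaling an optimal one-dimensional profile with values in $[-1,1]$ (truncating at the wells if necessary) and hence uniformly bounded in $L^\infty(I)$ independently of $\e$. Its convergence in measure to the bounded limit $u$ then upgrades to $L^2(I)$ by bounded convergence, without altering the asymptotic energy inequality. The only non-routine point is verifying that the recovery sequence of \cite{BDS} can indeed be taken uniformly bounded; once this is in place, all the remaining steps are standard.
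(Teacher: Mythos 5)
Your argument is correct and follows essentially the same route as the paper's: both deduce convergence in measure from Theorem \ref{BDS1dim} (noting that (H2) implies $(\widetilde{\mathrm{H}}2)$) and then upgrade to $L^2(I)$-convergence by using the quadratic lower bound in (H2) to control the contribution of large values of $u_n$ — the paper phrases this as equi-integrability of $\{|u_n|^2\}_n$, while you use the equivalent truncation-plus-tail decomposition. The $\Gamma$-convergence part is handled identically: liminf from the measure statement and recovery sequence borrowed from \cite{BDS}.
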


\begin{proof}
    It suffices to prove the statement in the case that $I$ be an interval. Let $\{u_n\}_n\subset H^k(I)$ be a sequence such that $\sup_n E^1_{\e_n}(u_n, I)<+\infty$. Since (H$2$) implies $\rm (\widetilde{H}2)$, we apply Theorem \ref{BDS1dim} to deduce that, up to subsequences, $u_n\to u$ in measure for some $u\in BV(I;\{-1,1\})$. Let $A\subset I$ be measurable, by (H$2$) we have
    \begin{equation*}
\int_{A} |u_n|^2\,dt = \int_{A\cap \{|u_n|\leq 1\}} |u_n|^2\,dt + \int_{A\cap \{|u_n| > 1\}} |u_n|^2\,dt \leq \mathcal{L}^1(A)+2\int_A \bigl[W(u_n)+1\bigr]\,dt,
\end{equation*}
and since $\sup_n E^1_{\e_n}(u_n, I)<+\infty$, it holds that $\int_I W(u_n)\,dt\to0$ as $n\to+\infty$. Hence, the sequence $\{|u_n|^2\}_n$ is equi-integrable, which, together with its convergence in measure, implies $u_n\to u$ in $L^2(I)$. 

As for the $\Gamma$-limit, the lower bound follows again by Theorem \ref{BDS1dim}, while a recovery sequence for the $L^2(I)$-convergence is obtained exactly as in Proposition $5.3$ in \cite{BDS}.
\end{proof}

At this point, we prove an analogous compactness result for functionals \eqref{functionalBDS}, replacing $I$ with $\Omega\subset \R^d,\,d>1$. This is possible by applying a compactness criterion by slicing that appears in \cite{ABS}. We introduce some notation specific for its statement: here, $\Omega$ denotes a $2$-dimensional rectangle which is the product of the bounded, open intervals $I,J$, and a point $x\in\Omega$ is denoted by $(y,z)$, with $y\in I, z\in J$. Given a function $u$ defined on $\Omega$, for every $y\in I$ we let $u^y$ denote the function $u^y(z):=u(y,z)$ for every $z\in J$, and for every $z\in J$ we let $u^z$ denote the function $u^z(y):=u(y,z)$ for every $y\in I$. We say that two sequences $\{u_n\}_n$ and $\{v_n\}_n$ are $\delta$-close if $\|u_n-v_n\|_{L^1(\Omega)}\leq \delta$.
 
\begin{proposition}\label{slicingcompactness}
Assume that a sequence $\{u_n\}_n\subset L^1(\Omega)$ is equi-integrable, and that for every $\delta>0$ there exist sequences $\{v_n\}_n$ and $\{w_n\}_n$ $\delta$-close to $\{u_n\}_n$, and such that $\{v^y_n\}_n$ is precompact in $L^1(J)$ for a.e.\ $y \in I$, and $\{w^z_n\}_n$ is precompact in $L^1(I)$ for a.e.\ $z \in J$. Then $\{u_n\}_n$ is precompact in $L^1(\Omega)$.
\end{proposition}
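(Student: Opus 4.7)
The plan is to verify the Kolmogorov--Riesz compactness criterion in $L^1(\Omega)$ for the sequence $\{u_n\}_n$. Since $\{u_n\}_n$ is equi-integrable on the bounded rectangle $\Omega = I \times J$, and in particular uniformly bounded in $L^1(\Omega)$, precompactness in $L^1(\Omega)$ is equivalent to the equi-continuity of translations:
$$\lim_{|h| \to 0} \sup_n \|\tau_h u_n - u_n\|_{L^1(\Omega \cap (\Omega - h))} = 0, \qquad \tau_h u(x) := u(x+h).$$
Decomposing $h = (h_y, 0) + (0, h_z)$ and applying the triangle inequality reduces the task to treating axis-aligned translations; by the symmetry of the hypotheses, I focus on translations of the form $(0, h_z)$, handled via $\{v_n\}_n$, with the case $(h_y, 0)$ treated analogously through $\{w_n\}_n$.

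Fix $\epsilon > 0$, set $\delta := \epsilon/4$, and let $\{v_n\}_n$ be the corresponding $\delta$-close sequence supplied by the hypothesis. The $\delta$-closeness and translation invariance of the $L^1$-norm give
$$\|\tau_{(0, h_z)} u_n - u_n\|_{L^1} \leq 2\delta + \|\tau_{(0, h_z)} v_n - v_n\|_{L^1},$$
so it remains to bound the last term uniformly in $n$ for small $|h_z|$. By Fubini, this term equals $\int_I \|\tau_{h_z} v_n^y - v_n^y\|_{L^1}\, dy$, computed on the relevant slice of $J$.

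The key sub-step is to upgrade the pointwise-in-$y$ precompactness of $\{v_n^y\}_n$ to a uniform bound on this integrated modulus of continuity. By the Kolmogorov criterion applied slicewise, for a.e.\ $y \in I$ the quantity
$$\omega(y, r) := \sup_n \sup_{|h_z| \leq r} \|\tau_{h_z} v_n^y - v_n^y\|_{L^1(J)}$$
vanishes as $r \to 0^+$. The function $\omega(\cdot, r)$ is measurable in $y$: continuity of translations in $L^1(J)$ reduces the supremum over $h_z$ to a countable set (say the rationals), while Fubini yields measurability in $y$ of each individual norm. Given $\epsilon' > 0$, the level sets $A_k := \{y \in I : \omega(y, 1/k) \leq \epsilon'\}$ are measurable and increase to a set of full measure. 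Since $\{v_n\}_n$ inherits equi-integrability from $\{u_n\}_n$ (being $\delta$-close to it), I can choose $k_0$ so large that $|I \setminus A_{k_0}|$ falls below the equi-integrability threshold for $\{v_n\}_n$ associated with $\epsilon'$, so that $\int_{(I \setminus A_{k_0}) \times J} |v_n|\, dx \leq \epsilon'$ for every $n$.

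Splitting the $y$-integral at $A_{k_0}$: on $A_{k_0}$ the integrand is at most $\epsilon'$, contributing at most $\epsilon' |I|$; on $I \setminus A_{k_0}$ it is bounded by $2\|v_n^y\|_{L^1(J)}$, contributing at most $2\epsilon'$. Hence for $|h_z| < 1/k_0$ one has $\sup_n \|\tau_{(0, h_z)} v_n - v_n\|_{L^1} \leq (|I| + 2)\epsilon'$; taking $\epsilon'$ small enough this is below $\epsilon/2$, so that $\sup_n \|\tau_{(0, h_z)} u_n - u_n\|_{L^1} < \epsilon$ as required. The symmetric argument using $\{w_n\}_n$ controls $(h_y, 0)$-translations, and Kolmogorov--Riesz then yields precompactness of $\{u_n\}_n$ in $L^1(\Omega)$. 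The main technical obstacle I anticipate is precisely this Egorov-type upgrade: it is essential to combine the measurability of the slicewise modulus $\omega(\cdot, r)$ with the equi-integrability of $\{v_n\}_n$ in order to absorb the contribution of the "bad" set $I \setminus A_{k_0}$ uniformly in $n$, since the pointwise-in-$y$ hypothesis gives no direct control on how fast the slicewise modulus vanishes.
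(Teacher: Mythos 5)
The paper does not actually supply a proof of this proposition; it is imported verbatim from \cite{ABS} and used as a black box in the proof of Proposition \ref{compactnessBDSanydim}, so there is no in-paper argument to compare against. Taken on its own merits, your route --- Kolmogorov--Riesz through axis-aligned translations, an Egorov-type upgrade of the slicewise translation modulus $\omega(y,r)$, and a final absorption of the bad set --- is a perfectly reasonable strategy, and the measurability discussion for $\omega(\cdot,r)$ and the transfer of translation control from $\{u_n\}_n$ to $\{v_n\}_n$ via $\delta$-closeness are both correct.

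There is, however, one step that would fail as written. You assert that ``$\{v_n\}_n$ inherits equi-integrability from $\{u_n\}_n$ (being $\delta$-close to it)'' and then pick $k_0$ so that $\int_{(I\setminus A_{k_0})\times J}|v_n|\,dx\le\epsilon'$ for all $n$. This is not true: $\|u_n-v_n\|_{L^1(\Omega)}\le\delta$ does not preclude $v_n$ from carrying a concentrated spike of $L^1$-mass $\delta$ on an arbitrarily small set (take $u_n\equiv 0$ and $v_n=n\chi_{[0,\delta/n]\times J'}$), so the strongest general bound is $\int_A|v_n|\,dx\le\int_A|u_n|\,dx+\delta$, which cannot be driven below the fixed $\delta$ however small $|A|$ is. Since your final estimate needs $\epsilon'<\epsilon/\bigl(2(|I|+2)\bigr)$, which is always strictly less than $\delta=\epsilon/4$, the inequality you rely on is unavailable. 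The repair is routine: bound $\int_{(I\setminus A_{k_0})\times J}|v_n|\,dx\le\int_{(I\setminus A_{k_0})\times J}|u_n|\,dx+\delta$, invoke the equi-integrability of $\{u_n\}_n$ itself to control the first term, and absorb the leftover $\delta$ by taking $\delta$ somewhat smaller (e.g.\ $\delta=\epsilon/8$) at the outset. With that adjustment the argument closes.
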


We employ this result exactly as in \cite{FM}. For the convenience of the reader, we sketch the proof, and for simplicity, we consider only the case $d=2$ when needed.

\begin{proposition}\label{compactnessBDSanydim} Let $\{E_\e\}_\e$ be defined as in \eqref{functionalBDS}, let $\{\e_n\}_n$ be a positive sequence converging to $0$, and assume that {\rm (H1)} and {\rm (H2)} are satisfied. If $\{u_n\}_n\subset H^k(\Omega)$ is a sequence such that $\sup_n E_{\e_n}(u_n, \Omega)<+\infty$, then there exist a subsequence $\{u_{n_j}\}_j$ and a function $u\in BV(\Omega ;\{-1,1\})$ such that, $u_{n_j}\to u$ in $L^2(\Omega)$ as $j\to+\infty$.        
\end{proposition}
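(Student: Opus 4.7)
The plan is to combine the one-dimensional compactness result of Corollary \ref{BDS1dimcorollary} with the slicing criterion of Proposition \ref{slicingcompactness}, following the strategy of \cite{FM}. As a preliminary observation, (H2) yields the elementary bound $s^2 \leq 2 W(s) + 2$ for every $s \in \R$, so that for every measurable $A \subseteq \Omega$ one has
\[
\int_A |u_n|^2\, dx \leq 2\int_\Omega W(u_n)\, dx + 2\Ld(A) \leq 2\e_n \sup_m E_{\e_m}(u_m, \Omega) + 2\Ld(A).
\]
In particular, $\{u_n\}_n$ is bounded in $L^2(\Omega)$ and $\{|u_n|^2\}_n$ is equi-integrable, so that once $L^1$-precompactness has been established, Vitali's convergence theorem upgrades it to precompactness in $L^2(\Omega)$.

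For the $L^1$-precompactness we invoke Proposition \ref{slicingcompactness}, which for simplicity we illustrate in the case $d=2$ with $\Omega = I \times J$. By Fubini's theorem and the pointwise inequality \eqref{operatorial}, for both coordinate directions $\xi \in \{e_1, e_2\}$ it holds that
\[
\int_{\Omega^\xi} E^1_{\e_n}(u_n^{\xi, y}, \Omega^{\xi, y})\, d\Hd(y) \leq E_{\e_n}(u_n, \Omega) \leq C.
\]
The main obstacle is that this only controls the averaged one-dimensional energy, and not $\sup_n E^1_{\e_n}(u_n^{\xi, y}, \Omega^{\xi, y})$ on a common set of slices, so that one cannot simply take $v_n = u_n$ in Proposition \ref{slicingcompactness}. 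To construct suitable $\delta$-close sequences, given $\delta > 0$ I would fix a threshold $M$ and set
\[
A_n^\xi := \{y \in \Omega^\xi : E^1_{\e_n}(u_n^{\xi, y}, \Omega^{\xi, y}) \leq M\},
\]
defining $v_n$ (resp.\ $w_n$) as $u_n$ on slices indexed by $A_n^{e_2}$ (resp.\ $A_n^{e_1}$) and zero elsewhere. Chebyshev's inequality gives $\Hd(\Omega^\xi \setminus A_n^\xi) \leq C/M$, and combining this with the uniform $L^2$ bound from the first paragraph and Cauchy--Schwarz yields $\|v_n - u_n\|_{L^1(\Omega)}, \|w_n - u_n\|_{L^1(\Omega)} \leq \delta$ provided $M$ is chosen large. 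For every $y$, the sequence $\{v_n^{e_2, y}\}_n$ is precompact in $L^1$: on the subset of indices with $y \in A_n^{e_2}$ the one-dimensional energy is uniformly bounded by $M$ and Corollary \ref{BDS1dimcorollary} applies, while on the complementary indices $v_n^{e_2, y} \equiv 0$. The analogous argument works for $\{w_n^{e_1, z}\}_n$, and Proposition \ref{slicingcompactness} yields the $L^1$-precompactness of $\{u_n\}_n$.

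It remains to identify the limit. Having extracted a subsequence $\{u_{n_j}\}_j$ converging to some $u$ in $L^2(\Omega)$ and $\Ld$-a.e., since $\int_\Omega W(u_{n_j})\, dx \to 0$ and the zero set of $W$ is $\{-1, 1\}$ by (H1), continuity of $W$ forces $u \in \{-1, 1\}$ a.e. To verify that $u \in BV(\Omega; \{-1, 1\})$, I would combine the one-dimensional $\Gamma$-$\liminf$ inequality of Theorem \ref{BDS1dim} with Theorem \ref{bvslicing}: for each coordinate direction $\xi$ and $\Hd$-a.e.\ $y \in \Omega^\xi$, a further subsequence of $\{u_{n_j}^{\xi, y}\}_j$ converges in $L^2(\Omega^{\xi, y})$ to $u^{\xi, y} \in BV(\Omega^{\xi, y}; \{-1, 1\})$ with
\[
m_k \#\bigl(S(u^{\xi, y}) \cap \Omega^{\xi, y}\bigr) \leq \liminf_{j\to\infty} E^1_{\e_{n_j}}(u_{n_j}^{\xi, y}, \Omega^{\xi, y}),
\]
and Fatou's lemma applied to the slicewise energy bound yields $\int_{\Omega^\xi} \#(S(u^{\xi, y})\cap\Omega^{\xi, y})\, d\Hd(y) < \infty$. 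Theorem \ref{bvslicing} then concludes that $u \in BV(\Omega; \{-1, 1\})$.
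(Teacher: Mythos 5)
Your argument follows the paper's strategy closely: slice, truncate away the energy-expensive slices, invoke Corollary \ref{BDS1dimcorollary} on each slice, apply the criterion of Proposition \ref{slicingcompactness}, and finish with the $\Gamma$-liminf on slices plus Theorem \ref{bvslicing}. There is one nontrivial deviation worth flagging: on the bad slices you set $v_n^y \equiv 0$, whereas the paper sets $v_n^y \equiv 1$. The paper's choice is not cosmetic. Since $1$ lies in the zero set of $W$ by (H1), the constant slice $1$ has $E^1_{\e_n}(1, J) = 0$, so $\sup_n E^1_{\e_n}(v_n^y, J)$ is bounded \emph{for every} $y$ and Corollary \ref{BDS1dimcorollary} applies directly to the entire sequence $\{v_n^y\}_n$. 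With your choice, $E^1_{\e_n}(0, J) = \tfrac{W(0)}{\e_n}\mathcal{L}^1(J) \to +\infty$, so the energy of $\{v_n^y\}_n$ is unbounded whenever $y$ drops out of $A_n^{e_2}$ infinitely often, and Corollary \ref{BDS1dimcorollary} cannot be invoked on the full sequence. Your precompactness claim is still correct, but only after the subsidiary argument that you sketch (any subsequence splits into infinitely many zero terms, which converge to $0$, and infinitely many bounded-energy terms, to which the Corollary applies); this is sound but less economical. If you replace $0$ by either well bottom $\pm 1$, the case-splitting disappears.
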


\begin{proof}
For simplicity, in the first part of the proof we suppose $d=2$ and that $\Omega$ be the rectangle $I\times J$, otherwise, it suffices to cover a general $\Omega$ by essentially disjoint rectangles in order to reduce the proof to our simpler case. 
Using the notation previously introduced, we recall that $\{u^y_n\}_n\subset H^k(J)$ for a.e.\ $y\in I$, and, analogously, that $\{u^z_n\}_n\subset H^k(I)$ for a.e.\ $z\in J$. In particular, it holds 
\begin{equation*}
\frac{d^ku_n^y}{dz^k}(z) = \frac{\partial^k u_n}{\partial z^k}(x), \qquad \frac{d^ku_n^z}{dy^k}(y)=\frac{\partial^k u_n}{\partial y^k}(x)
\end{equation*}
for a.e.\ $x=(y,z)\in \Omega$ and for all $n$. As a consequence, by the definition of the operatorial norm, we have $\|\nabla^{(k)}u_n\|_k\geq \max\Bigl\{\Bigl|\displaystyle\frac{d^k u_n^y}{dz^k}\Bigr|,\Bigl|\displaystyle\frac{d^k u_n^z}{dy^k}\Bigr|\Bigr\}$, and then 
\begin{equation}\label{abs1}
E_\e(u_n, \Omega)\geq \int_I E^1_\e(u_n^y, J)\,dy, \qquad E_\e(u_n, \Omega)\geq \int_JE^1_\e(u_n^z, I)\,dz.
\end{equation}
Arguing as in the proof of Corollary \ref{BDS1dimcorollary}, we find that that $\{u_n\}_n$ and $\{|u_n|^2\}_n$ are equi-intergable sequences; therefore, we fix $\delta>0$ and we find $\delta'\in(0,\delta)$ such that
\begin{equation}\label{abs2}
\mathcal{L}^2(A)\leq \delta' \mathcal{L}^1(J)
 \implies \sup_{n\in\N} \int_A (|u_n|+1)\,dx\leq \delta.
\end{equation}
Letting $S$ denote $\sup_n E_{\e_n}(u_n, \Omega)$, for every $n\in \N$ we set
\[
v_n^y(z):=\begin{cases}
    u_n^y(z) & \text{ if } E^1_\e(u_n^y, J) \leq S/\delta', \\
1 & \text{ otherwise}.
\end{cases}
\]
We also put $Y_n:=\{y\in I :u^y_n- v_n^y \text{ is not identically }0\}$ and we observe that 
\begin{equation*}
    \mathcal{L}^1(Y_n) \leq \mathcal{L}^1(\{ y\in I:E^1_\e(u_n^y, J) > S/\delta'\}) \leq \frac{\delta'}{S}\int_I E^1_\e(u_n^y, J)\,dy \leq \delta',
\end{equation*}
where the last inequality follows by \eqref{abs1}. This inequality, combined with \eqref{abs2}, implies that
\[
\|u_n-v_n\|_{L^1(\Omega)}\leq \int_{Y_n\times J} (|u_n|+1)\,dx\leq \delta;
\]
that is, $\{u_n\}_n$ and $\{v_n\}_n$ are $\delta$-close. Moreover, by construction and by (H$1$), we have that for every $y\in I$ the sequence $\{v_n^y\}_n$ is such that $\sup_n E^1_{\e_n}(v_n^y, J)\leq S/\delta'$. Hence, by Corollary \ref{BDS1dimcorollary}, this sequence is precompact in $L^2(J)$, and thus in $L^1(J)$.

A similar argument allows to find a sequence $\{w_n \}_n$ that is $\delta$-close to $\{u_n\}_n$ and such that, for a.e.\ $z\in J$, the sequence $\{w_n^z\}_n$ is precompact in $L^1(I)$. Therefore, by applying Proposition \ref{slicingcompactness} we deduce that $\{u_n\}_n$ is precompact in $L^1(\Omega)$. Finally, since $\{|u_n|^2\}_n$ is equi-integrable, we deduce the precompactness in $L^2(\Omega)$.

To conclude, we prove, without further assumptions on $d$ and $\Omega$, that if the subsequence $\{u_{n_j}\}_j$ converges to $u$ in $L^2(\Omega)$ as $j\to+\infty$, then $u\in BV(\Omega;\{-1,1\})$. To see this, using the notation introduced in Section $2$, we prove that for every $m\in\{1,...,d\}$, the one-dimensional slice $u^{e_m,y}$ has bounded variation on the open set $\Omega^{e_m,y}$ for $\Hd$-a.e.\ $y\in \Omega^{e_m}$, and 
\begin{equation*}
    \int_{\Omega^{e_m}} \# (S(u^{e_m,y})\cap\Omega^{e_m,y})\,d\Hd(y)<+\infty.
\end{equation*}
Arguing as previously done for \eqref{abs1}, we have that
\begin{equation*}
    E_\e(u_{n_j}, \Omega)\geq \int_{\Omega^{e_m}} E^1_\e(u_{n_j}^{e_m,y}, \Omega^{e_m,y})\,d\Hd(y).
\end{equation*}
Moreover, upon extracting a further subsequence that we do not relabel, it holds that $u^{e_m,y}_{n_j}\to u^{e_m,y}$ in $L^2(\Omega^{e_m,y})$ for $\Hd$-a.e.\ $y\in \Omega^{e_m}$. Hence, by Fatou's lemma and Corollary \ref{BDS1dimcorollary} applied with $I=\Omega^{e_m,y}$ and $u=u^{e_m,y}$, we get
\begin{align*}
 +\infty > S \geq \liminf_{j\to+\infty}  E_\e(u_{n_j}, \Omega) & \geq \int_{\Omega^{e_m}} \liminf_{j\to+\infty} E^1_\e(u_{n_j}^{e_m,y}, \Omega^{e_m,y})\,d\Hd(y) \\
  & \geq \int_{\Omega^{e_m}} E_0^1(u^{e_m,y},\Omega^{e_m,y})\,d\Hd(y).
\end{align*}
As a consequence, $E_0^1(u^{e_m,y},\Omega^{e_m,y})$ is finite for $\Hd$-a.e.\ $y\in \Omega^{e_m}$, which implies that
\begin{align*}
 +\infty > S & \geq \int_{\Omega^{e_m}} E_0^1(u^{e_m,y},\Omega^{e_m,y})\,d\Hd(y) \\
  & \geq m_k\int_{\Omega^{e_m}} \#(S(u^{e_m,y})\cap\Omega^{e_m,y})\,d\Hd(y).
\end{align*} 
Then, since $m_k>0$, for such $y$ it holds that $u^{e_m,y}\in BV(\Omega^{e_m,y};\{-1,1\})$, hence, the conclusion follows applying Theorem \ref{bvslicing}.
\end{proof}
Finally, the proof our main result is an immediate consequence.

\begin{proof}[Proof of Theorem \ref{compactness}]
    Let $S$ denote $\sup_n F_{\e_n}(u_n, \Omega)$.
    If $\mathcal{N}=\emptyset$, by the equivalence of the norms there exists $c>0$ such that $F_{\e_n}(u_n,\Omega) \geq  cE_{\e_n}(u_n,\Omega)$ for every $n$ so that
    \begin{equation*}
        S/c \geq \sup_n E_{\e_n}(u_n, \Omega)
    \end{equation*}
    and the conclusion follows by Proposition \ref{compactnessBDSanydim}.

    \noindent If $\mathcal{N}\neq\emptyset$, again by the equivalence of the norms and by Corollary \ref{interpolationwithnorms}, upon restricting to large $n$, it holds
    \begin{equation*}
        S/\delta' \geq \sup_n E_{\e_n}(u_n, \Omega)
    \end{equation*}
    for some $\delta'>0$; therefore, the conclusion follows by Proposition \ref{compactnessBDSanydim}.
\end{proof}

\section{Lower bound}\label{section lower bound}

In this section we prove the liminf inequality of Theorem \ref{main theorem}. In particular, we prove the following result.
\begin{proposition}\label{prop:liminf}Let $\Omega\subset \Rd$ 
be a bounded, open set with $C^1$ boundary and let $\mathcal{N}:=\{\ell\in\{1,...,k-1\}: q_\ell \leq 0\}$. Assume that {\rm (H1)-(H3)} are satisfied and that one of the following  conditions holds: 
\begin{itemize}
    \item[\emph{(i)}] $\mathcal{N}=\emptyset$;
    \item [\emph{(ii)}] $\mathcal{N}\neq \emptyset$ and $q_\ell>-\overline{q}_\ell$ for every $\ell\in\mathcal{N}$ with the same positive constants $\{\overline{q}_\ell:\ell\in\mathcal{N}\}$ independent of $\Omega$ appearing in Theorem \ref{compactness}.
\end{itemize}
    Then for every $\{\e_n\}_n$  positive sequence converging to $0$ and $\{u_n\}_n\subset H^k(\Omega)$ such that $u_n\to u$ in $L^2(\Omega)$ it holds
    \begin{equation*}
F_0(u)\leq\liminf_{n\to+\infty}F_{\e_n}(u_n).
    \end{equation*}
    \end{proposition}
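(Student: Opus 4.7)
My plan is to adapt the blow-up method of Fonseca and M\"uller along the lines of \cite{ChermisiDalMaso11}, accounting for the anisotropy of the tensor norms and the presence of higher order derivatives. First, I may assume that the liminf is finite and, along a non-relabeled subsequence, equals $L := \lim_n F_{\e_n}(u_n, \Omega) < +\infty$; otherwise there is nothing to prove. By Theorem \ref{compactness} the limit $u$ belongs to $BV(\Omega; \{-1,1\})$, so I can write $u = 2\chi_A - 1$ with $A$ a set of finite perimeter in $\Omega$.

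Next, I would introduce the signed Radon measures $\mu_n$ on $\Omega$ whose densities are the integrand of $F_{\e_n}(u_n, \cdot)$. Corollary \ref{interpolationwithnorms}, applied globally on $\Omega$ (which is $C^1$), ensures that the auxiliary positive energies obtained by replacing each $q_\ell$ by $|q_\ell|$ have uniformly bounded total mass; hence $\mu_n \rightharpoonup^* \mu$ in the sense of signed Radon measures along a further subsequence. Writing the Radon--Nikodym decomposition of $\mu$ with respect to $\mathcal{H}^{d-1} \mres (\partial^* A \cap \Omega)$, the proof reduces to the pointwise lower bound
\begin{equation*}
\frac{d\mu}{d\mathcal{H}^{d-1} \mres \partial^* A}(x_0) \,\geq\, g(\nu_A(x_0)) \qquad \text{for } \mathcal{H}^{d-1}\text{-a.e. } x_0 \in \partial^* A,
\end{equation*}
together with a nonnegativity-type control on the singular part of $\mu$ away from $\partial^* A$, which is again a consequence of Corollary \ref{interpolationwithnorms}, exactly as in \cite{ChermisiDalMaso11}.

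To establish the pointwise bound I would fix a generic $x_0 \in \partial^* A$ where both the Besicovitch derivative exists and the blow-up property from Theorem \ref{thm:structurethm} holds, set $\nu := \nu_A(x_0)$, and pick a sequence $r_n \to 0$ slow enough that $\tau_n := \e_n/r_n \to 0$. Setting $v_n(y) := u_n(x_0 + r_n y)$ on $Q_1^\nu$, a direct change of variables gives
\begin{equation*}
\frac{F_{\e_n}(u_n,\, x_0 + r_n Q_1^\nu)}{r_n^{d-1}} \,=\, \int_{Q_1^\nu} \Bigl[\frac{1}{\tau_n} W(v_n) + \sum_{\ell=1}^{k} q_\ell \tau_n^{2\ell-1} |\nabla^{(\ell)} v_n|_\ell^2 \Bigr]\, dy,
\end{equation*}
and $v_n \to \mathrm{sgn}(y\cdot\nu)$ in $L^2(Q_1^\nu)$ by the blow-up property of $A$ at $x_0$. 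The right-hand side is precisely of the form appearing in the cell formula \eqref{density}, except that $v_n$ fails the required boundary condition $v_n = \overline{u}^\nu_{\tau_n}$ near $\partial Q_1^\nu$. The plan is to invoke Lemma \ref{lemma:modifying sequences} to replace $v_n$ by $w_n \in \mathcal{A}_{\tau_n}^\nu$ at vanishing additional energy cost; the definition of $g(\nu)$ then delivers the desired pointwise bound, with the upper semicontinuity of $g$ from Proposition \ref{densitywelldef} absorbing the small perturbations of the normal direction introduced along the way.

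The main obstacle is carrying out this boundary modification. As highlighted in Remark \ref{re:uniform interpolation}, the interpolation inequality of Theorem \ref{nonlinintd} is not available on cubes, so one cannot argue directly on $Q_1^\nu$. My plan is instead to work on a $C^1$ set $\Omega_\rho \subset Q_1^\nu$ that agrees with $Q_1^\nu$ inside the central slab $\{|y \cdot \nu| \leq 1/4\}$ (where essentially all of the surface energy concentrates, since $v_n$ converges to a step function across $\{y\cdot\nu = 0\}$) and smoothly rounds off the corners outside that slab, using the uniform threshold constructed in Remark \ref{re:uniform interpolation}. On $\Omega_\rho$ the interpolation estimate is quantitative, and combined with a cut-off procedure relying on assumption (H3) it allows the gluing between $v_n$ and $\overline{u}^\nu_{\tau_n}$ on a thin annular region near $\partial Q_1^\nu$ at a cost controlled by the tail energy $F_{\e_n}(u_n, x_0 + r_n(Q_1^\nu \setminus Q_{1-\rho}^\nu))/r_n^{d-1}$. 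A diagonal argument first in $n$ and then in $\rho \to 0$ drives this tail to zero, closing the liminf inequality.
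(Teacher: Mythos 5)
Your proposal follows essentially the same route as the paper: assume the liminf is finite, invoke Theorem~\ref{compactness} to get $u\in BV(\Omega;\{-1,1\})$, set up the localized energies as measures $\mu_n$, apply Corollary~\ref{interpolationwithnorms} both for a uniform bound on $|\mu_n|(\Omega)$ and for nonnegativity of the limit measure $\mu$, blow up at a Besicovitch point of $\partial^*A$ with a scale $r_n\gg\e_n$, and then use Lemma~\ref{lemma:modifying sequences} on the $C^1$ domain $\widetilde Q^\nu$ (agreeing with $Q_1^\nu$ in the slab $\{|y\cdot\nu|\le 1/4\}$) to force the boundary data $\overline u^\nu_{\tau_n}$ so that the definition of $g(\nu)$ applies. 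This is the paper's argument.

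One detail in your last paragraph is not how Lemma~\ref{lemma:modifying sequences} is actually used, and if taken literally it leaves a gap. You describe the gluing as costing an amount comparable to the \emph{tail} energy $F_{\e_n}(u_n,x_0+r_n(Q_1^\nu\setminus Q_{1-\rho}^\nu))/r_n^{d-1}$, to be driven to zero by letting $\rho\to0$ after $n\to+\infty$. The Besicovitch derivative only controls the total rescaled energy on the blow-up cube, not its restriction to a thin annulus, so this tail does not obviously vanish. The paper instead avoids paying for the tail altogether: Remark~\ref{re:uniform interpolation} gives a uniform interpolation threshold on the family of $C^1$ annular sets $\widetilde Q_\tau$ for $\tau\in(\rho,\rho+\sigma)$, which makes the energy of $v_n$ on any such $\widetilde Q_\tau$ \emph{nonnegative}, so that discarding the annular part only decreases the energy. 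The gluing layer $L^{i^*}_{n,\sigma,\rho}$ is then selected by a standard averaging argument so that its contribution (together with the cut-off error terms estimated via (H3) and the $L^2$-closeness of $v_n$ and $\overline u_n$) is $O(\e_n)+O(\sigma)+O(\rho)$. Since you cite Lemma~\ref{lemma:modifying sequences} as a black box the overall argument closes, but you should be aware that its mechanism is a positivity trick and a layer-selection argument, not a tail-energy estimate. The mention of upper semicontinuity of $g$ to absorb small normal perturbations is not used in the paper's liminf proof (the blow-up normal is fixed directly), though it does no harm as an extra safeguard.
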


In the following, $C$ denotes a generic positive constant that may change from line to line, but which is independent of the relevant parameters.
We recall that $\overline{u}$ is a function in $H^k_{\rm loc}(\R)$ such that $\overline{u}(t)=-1$ if $t\leq -1/8$  and $\overline{u}(t)=1$ if $t\geq 1/8$, $|\overline{u}|\leq 1$,  and 
\begin{equation}\label{boundedness of ubar}
    0<\int_{-\infty}^{+\infty} \Bigl[W(\overline{u})+\sum_{\ell=1}^k|q_\ell||\overline{u}^{(\ell)}|^2\Bigr]\,dt<+\infty.
\end{equation}
Moreover, for every $\e>0$ we have set $\overline{u}^\nu_\e(x):=\overline{u}(\tfrac{x\cdot\nu}{\e})$ for all $x\in \Rd$. We observe that, thanks to \eqref{boundedness of ubar}, the definition of $\overline{u}^\nu_{\e}$, and Fubini's Theorem, for every open set $V\subseteq \Omega$ there exists a constant $C>0$ such that for every $\nu\in\Sd$ and $\e>0$ we have that
\begin{align}\notag
 F_{\e}(\overline{u}_{\e},V)&\leq C\int_{\pi^\nu(V)}\int_{-\infty}^{+\infty}\big[W(\overline{u})+\sum_{\ell=1}^k|q_\ell||\overline{u}^{(\ell)}|^2\big]\,dt\,d\Hd(y)\\ \label{bounds on uepsbar}
 &\leq C\int_{-\infty}^{+\infty}\big[W(\overline{u})+\sum_{\ell=1}^k|q_\ell||\overline{u}^{(\ell)}|^2\big]\,dt \leq C,
\end{align}
where $\pi^\nu$ denotes the orthogonal projection onto $\nu^\perp$ and where we have also used that $(\overline{u}_\e^\nu)^{\nu,y}(t)=\overline{u}(\tfrac{t}{\e})$ for every $\e>0$, $y\in \nu^\perp$, and $t\in\R$.

We further set \begin{equation}\label{def uzeronu}
    u^\nu_0(x):=\text{sgn}(x\cdot\nu),
\end{equation} where $\text{sgn}$ denotes the sign function and remark that, for every $K\subset \Rd$ compact, $\{\overline{u}_{\e}^\nu\}_\e$ converges to $u^\nu_0$ in $L^2(K)$ as $\e\to0$.

We start by proving a lemma which allows to modify boundary conditions on a converging sequence at the cost of an asymptotically negligible amount of energy.

For every $x\in\R^d,\nu\in\Sd$, we introduce $\widetilde{Q}^{\nu}(x)$ an open subset of $Q^\nu(x)$ having $C^2$ boundary and such that
    \begin{equation*}
             \widetilde{Q}^{\nu}(x)\cap \{y\in\Rd:|y\cdot\nu|\leq \tfrac{1}{4}\}=Q^\nu(x)\cap \{y\in\Rd:|y\cdot\nu|\leq \tfrac{1}{4}\}.
         \end{equation*}
We suppose that all the above sets are congruent, namely, that they are obtained starting by $\widetilde{Q}^{e_d}(0)$ by a translation and a rotation.  Also for the {\em rounded} cube $\widetilde{Q}^\nu(x)$ we use the convention that when $x=0$ we simply write $\widetilde{Q}^\nu$.

\begin{lemma}\label{lemma:modifying sequences}
Let $\{\e_n\}_n$ be a positive sequence converging to $0$, $\nu\in\Sd$, and let $\{v_n\}_n\subset H^k(\widetilde{Q}^\nu)$ be a sequence converging to $u^\nu_0$ in $L^2(\widetilde{Q}^\nu)$. Assume that {\rm (H1)-(H3)} are satisfied and that one of the two conditions (i) and (ii) of  Proposition \ref{prop:liminf} holds. Then there exists a sequence $\{w_n\}_n\subset H^k(\widetilde{Q}^\nu)$ such that 
\begin{enumerate}
    \item[(i)] $w_n\to u^\nu_0$ as $n\to +\infty$  $L^2(\widetilde{Q}^\nu)$,
    \item[(ii)] the support of $w_n-\overline{u}_n$ is compactly contained in $\widetilde{Q}^\nu$ for every $n\in\N$,
    \item[(iii)] it holds
    \begin{equation}\notag
        \liminf_{n\to+\infty}F_{\e_n}(w_n,\widetilde{Q}^\nu)\leq\liminf_{n\to+\infty}F_{\e_n}(v_n,\widetilde{Q}^\nu).
    \end{equation}
\end{enumerate}
\end{lemma}
\begin{proof}
For the sake of exposition, we only consider the case $\nu=e_d$, the others being analogous. For simplicity of notation, throughout the rest of the proof we set $\overline{u}_{n}:=\overline{u}^{e_d}_{\e_n}$, and we set $\widetilde{Q}:=\widetilde{Q}^{e_d}(0)$.
    It is not restrictive to assume that
    \begin{equation}\notag
    \|v_n-\overline{u}_n\|_{L^2(\widetilde{Q})}>0\text{ for every $n\in\N$}
    \end{equation}
as, if $v_n=\overline{u}_n $ for infinitely many $n\in\N$, it enough to set $w_n:=\overline{u}_n$ to conclude. An analogous argument shows that we may also assume that
\begin{eqnarray}
    &\displaystyle\label{eq:Lemma-bounded energy}\liminf_{n\to+\infty} F_{\e_n}(v_n,\widetilde{Q})=\lim_{n\to+\infty}F_{\e_n}(v_n,\widetilde{Q}) \leq C<+\infty.
\end{eqnarray}

\noindent By Corollary \ref{interpolationwithnorms}, \eqref{bounds on uepsbar}, and \eqref{eq:Lemma-bounded energy}   there exists a  constant $\delta>0,$ independent of $n$ and $\widetilde{Q}$, such that 
\begin{gather}\label{new boundedness lemma 1}
    \int_{\widetilde{Q}}\Bigl[\frac{W(v_n)}{\e_n}+\sum_{\ell=1}^k\e_n^{2\ell-1}|\nabla^{(\ell)}v_n|_\ell^2\Bigr]\,dx\leq   
    \frac1\delta F_{\e_n}(v_n,\widetilde{Q})\leq C,\\
    \label{new boundedness lemma 2}
         \int_{\widetilde{Q}}\Bigl[\frac{W(\overline{u}_n)}{\e_n}+\sum_{\ell=1}^k\e_n^{2\ell-1}|\nabla^{(\ell)}\overline{u}_{n}|_\ell^2\Bigr]\,dx\leq  \frac1\delta F_{\e_n}(\overline{u}_n,\widetilde{Q})\leq C,
\end{gather}
for $n\in\N$ large enough depending on $\{q_\ell : q_\ell\leq 0\}$ and $\widetilde{Q}$. Additionally, as both $v_n$ and $\overline{u}_n$ converge to $u^{e_d}_0$ in $L^2(\widetilde{Q})$ as $n\to+\infty$, we have that
\begin{equation}\label{L2 bound liminf}
    \int_{\widetilde{Q}}|v_n-\overline{u}_n|^2\,dx\leq C
\end{equation}
for every $n\in\N$.

Since the set $\widetilde{Q}$ is $C^2$, there exists $\rho_0>0$ such that for every $0<\rho<\rho_0$ there is a unique $C^1$ projection from the neighbourhood $\{x\in \widetilde{Q}: \text{dist}(x,\partial \widetilde{Q})<\rho \}$  onto $\partial \widetilde{Q}$. In particular, for every $0<\rho<\rho_0$ the sets $\{x\in \widetilde{Q}:\text{dist}(x,\partial \widetilde{Q})=\rho\}$ are $(d-1)$-dimensional manifolds in $\Rd$ of class $C^1$. 

Let $0<\rho<\rho_0/2$ and $0<\sigma<\rho_0/2$, and let $\lceil\e_n^{-1} \rceil$ be the smallest integer larger than or equal to $\e_n^{-1}$, so that 
\begin{equation}\label{epsilon contained}
    \tfrac{\e_n}{2}\leq\lceil\e_n^{-1} \rceil^{-1}\leq \e_n
\end{equation}
for $\e_n<1$. In light of the observations above, the set of class $C^1$ given by 
\begin{equation*}
    \{x\in \widetilde{Q} : \text{dist}(x,\partial \widetilde{Q})<\rho+\sigma\}
\end{equation*}
can be subdivided in $\lceil\e_n^{-1}\rceil$ open subsets with $C^1$ boundary defined as
\begin{equation} 
    L^i_{n,\sigma,\rho}:=\Bigl\{x\in\widetilde{Q}: (i-1)\frac{\sigma}{\lceil\e_n^{-1}\rceil}+\rho  <\text{dist} (x,\partial \widetilde{Q})< i\frac{\sigma}{\lceil\e_n^{-1}\rceil}+\rho\Bigr\} \quad \text{ for }i\in\{1,...,\lceil\e_n^{-1}\rceil\}.
\end{equation}
We observe that  
\begin{equation}\label{volume layers}
\Ld(L^{i}_{n,\sigma,\rho})\leq C\frac{\sigma}{\lceil\e_{n}^{-1}\rceil}.    
\end{equation}
From \eqref{new boundedness lemma 1}-\eqref{L2 bound liminf}  we deduce that 
\begin{align*}\sum_{i=1}^{\lceil\e_n^{-1
}\rceil}\Big(\int_{L^{i}_{n,\sigma,\rho}}&\Bigl[\frac{W(v_n)}{\e_n}
+\sum_{\ell=1}^k\e_n^{2\ell-1}|\nabla^{(\ell)}v_n|_\ell^2\Bigr]\,dx \\&+\int_{L^{i}_{n,\sigma,\rho}}\Bigl[\frac{W(\overline{u}_n)}{\e_n}+\sum_{\ell=1}^k\e_n^{2\ell-1}|\nabla^{(\ell)}\overline{u}_{n}|_\ell^2\Bigr]\,dx+\int_{L^{i}_{n,\sigma,\rho}}\frac{|v_n-\overline{u}_{n}|^2}{\|v_n-\overline{u}_n\|_{L^2(\widetilde{Q})}^2}\,dx\Big)
\leq C,\end{align*}
for every $n$ large enough. Hence, by \eqref{epsilon contained} we find $i^*=i^*(n,\sigma,\rho)\in \{1,...,\lceil\e_n^{-1}\rceil\}$ such that
\begin{align}\nonumber 
 \Big(\int_{L^{i^*}_{n,\sigma,\rho}}&\Bigl[\frac{W(v_n)}{\e_n}
+\sum_{\ell=1}^k\e_n^{2\ell-1}|\nabla^{(\ell)}v_n|_\ell^2\Bigr]\,dx \\&+\int_{L^{i^*}_{n,\sigma,\rho}}\Bigl[\frac{W(\overline{u}_n)}{\e_n}+\sum_{\ell=1}^k\e_n^{2\ell-1}|\nabla^{(\ell)}\overline{u}_{n}|_\ell^2\Bigr]\,dx+\int_{L^{i^*}_{n,\sigma,\rho}}\frac{|v_n-\overline{u}_{n}|^2}{\|v_n-\overline{u}_n\|_{L^2(\widetilde{Q})}^2}\,dx\Big) \leq C\e_n.\nonumber
\end{align}
In particular, for every $\ell\in\{1,...,k\}$ we have 
\begin{gather}\label{bounded derivatives layer}
\e_n^{2\ell-1}\||\nabla^{(\ell)}v_n|_\ell\|^2_{L^2(L^{i^*}_{n,\sigma,\rho})}\leq C\e_n,\\ \label{bounded derivatives layers 2}
\e_n^{2\ell-1}\||\nabla^{(\ell)}\overline{u}_n|_\ell\|^2_{L^2(L^{i^*}_{n,\sigma,\rho})}\leq C\e_n,\\
\label{bound double well witH N}
    \frac{1}{\e_n}\int_{L^{i^*}_{n,\sigma,\rho}} W(v_n)\,dx+\frac{1}{\e_n}\int_{L^{i^*}_{n,\sigma,\rho}} W(\overline{u}_n)\,dx\leq C\e_n,\\ \label{bound on L2strip}
    \int_{L^{i^*}_{n,\sigma,\rho}}|v_n-\overline{u}_{n}|^2\,dx\leq C\e_n
    \|v_n-\overline{u}_{n}\|^2_{L^2(\widetilde{Q})}.
\end{gather} 

We now introduce the sets 
\begin{gather*}
   A_{n,\sigma,\rho}:=\big\{x\in \widetilde{Q}: \text{dist}(x,\partial \widetilde{Q})\leq \rho +(i^*-1)\tfrac{\sigma}{\lceil\e_n^{-1}\rceil} \big\},\\
   B_{n,\sigma,\rho}:=\big\{x\in \widetilde{Q} :\text{dist}\big(x,\partial \widetilde{Q})>\rho +i^*\tfrac{\sigma}{\lceil\e_n^{-1}\rceil}\},   
\end{gather*}
and cut-off functions $\phi_{n,\sigma,\rho}\in C^\infty_c(\widetilde{Q}; [0,1])$ such that 
\begin{equation}\label{eq:def cut offs lemma}
    \hspace{2.5 cm}\begin{cases}
    \phi_{n,\sigma,\rho}(x)=0\quad \text{if }x\in A_{n,\sigma,\rho},\\    
    \phi_{n,\sigma,\rho}(x)=1\quad \text{if }x\in B_{n,\sigma,\rho},\\
\||\nabla^{(\ell)}\phi_{n,\sigma,\rho}|_\ell\|_{L^\infty(\widetilde{Q})}\leq C(\sigma\e_n)^{-\ell}\quad &\text{ for every }\ell\in\{1,...,k\}.
    \end{cases}
\end{equation}
For $x\in \widetilde{Q}$ we define
\begin{equation}\label{def wn}
w_n(x)=w^{\sigma,\rho}_n(x):=\phi_{n,\sigma,\rho}(x)v_n(x)+(1-\phi_{n,\sigma,\rho}(x))\overline{u}_{n}(x),
\end{equation}
and we observe that $w_n\in H^k(\widetilde{Q})$ and that $w_n-\overline{u}_n$ is compactly supported in $\widetilde{Q}$. Also note that, since both $\{v_n\}_n$ and $\{\overline{u}_n\}_n$ converge to $u^{e_d}_0$ in $L^2(\widetilde{Q})$, for every choice of $\sigma$ and $\rho$ the sequence $\{w_n\}_n$ converges in $L^2(\widetilde{Q})$ to $u^{e_d}_0$ as well.  

By \eqref{eq:def cut offs lemma} and \eqref{def wn}, we have 
\begin{align}
    \nonumber
    F_{\e_n}(w_n,\widetilde{Q})&=F_{\e_n}(v_n,B_{n,\sigma,\rho})+F_{\e_n}(w_n,L^{i^*}_{n,\sigma,\rho})+F_{\e_n}(\overline{u}_n,A_{n,\sigma,\rho})\\\nonumber 
  &\leq F_{\e_n}(v_n,B_{n,\sigma,\rho})+\int_{L^{i^*}_{n,\sigma,\rho}}\Bigl[\frac{W(w_n)}{\e_n}+\sum_{\ell=1}^k|q_\ell|\e_n^{2\ell-1}|\nabla^{(\ell)}w_n|_\ell^2\Bigr]\, dx
  \\\label{eq:splitting functional}
  & \,\,\quad +\int_{A_{n,\sigma,\rho}}\Bigl[\frac{W(\overline{u}_n)}{\e_n}+\sum_{\ell=1}^k|q_\ell|\e_n^{2\ell-1}|\nabla^{(\ell)}\overline{u}_n|_\ell^2\Bigr]\, dx=:I^1_n+I^2_n+I^3_n.
\end{align}

We now study the asymptotic behaviour of these three quantities. 

\noindent We claim that
\begin{equation}\label{I1}
    \limsup_{n\to+\infty} I^1_n = \limsup_{n\to+\infty} F_{\e_n}(v_n,B_{n,\sigma,\rho}) \leq \lim_{n\to+\infty} F_{\e_n}(v_n,\widetilde{Q})  
\end{equation}
for every fixed $\sigma, \rho$ small enough. This  inequality follows from an application of Remark \ref{re:uniform interpolation} with $\Omega=\widetilde{Q}$, $\rho^*=\rho$, and $\sigma<R^*(\rho)$. The choice of these parameters is allowed by our construction since  we will conclude our proof by letting $n\to+\infty,\, \sigma\to 0,$ and $ \rho\to0$, in this order.  

\noindent To prove \eqref{I1}, we set \begin{equation*}
    \widetilde{Q}_\tau:=\{x\in\widetilde{Q}:\text{dist}(x,\partial\widetilde{Q})<\tau\},
\end{equation*} 
for $\tau\in(\rho,\rho+\sigma)$. Then, by Remark \ref{re:uniform interpolation}, for every $\ell\in\{1,...,k-1\}$  we find $\e_0=\e_0(q_\ell,\widetilde{Q},\rho,\sigma)>0$ such that
\begin{equation*}
\int_{ \widetilde{Q}_{\tau} }\Bigl[\frac{1}{\e_n}W(v_n)+q_{\ell}\e_n^{2\ell-1}|\nabla^{(\ell)}v_n|_\ell^2\Bigr]\, dx\geq 0
\end{equation*}
holds for every  $n$ sufficiently large so that $\e_n\leq \e_0$ and for every $\tau\in(\rho,\rho+\sigma)$. Arguing as in the proof of Corollary \ref{interpolationwithnorms}, we deduce that there exists a possibly smaller $\e_0=\e_0(\{q_\ell:q_\ell\leq 0\},\widetilde{Q},\rho,\sigma)$ such that
\begin{equation}\label{positivity of energy}
    \int_{ \widetilde{Q}_{\tau} }\Bigl[\frac{1}{\e_n}W(v_n)+q_{\ell}\sum_{\ell=1}^k\e_n^{2\ell-1}|\nabla^{(\ell)}v_n|_\ell^2\Bigr]\, dx\geq 0
\end{equation}
for every $n$ sufficiently large so that $\e_n\leq \e_0$ and for every $\tau\in (\rho,\rho+\sigma)$. We now apply \eqref{positivity of energy} with $\tau=\rho+i^{*}\tfrac{\sigma}{\lceil\e_n^{-1}\rceil}$, so that $\widetilde{Q}_\tau=\widetilde{Q}\setminus B_{n,\sigma,\rho}$, and obtain that 
\begin{equation*}
    F_{\e_n}(v_n,\widetilde{Q})= F_{\e_n}(v_n,\widetilde{Q}\setminus B_{n,\sigma,\rho})+F_{\e_n}(v_n, B_{n,\sigma,\rho})\geq F_{\e_n}(v_n, B_{n,\sigma,\rho}),
\end{equation*}
which implies \eqref{I1}. 

 We now study the term $I^3_n$. From Fubini's Theorem, the definition of $\overline{u}_n$, a change of variable, and \eqref{boundedness of ubar} it follows that 
\begin{equation*}
    I^3_n\leq C \mathcal{H}^{d-1}(A_{n,\sigma,\rho}
    \cap\nu^\perp)\Big(\int_{-\infty}^{+\infty}[W(\overline{u}(t))+\sum_{\ell=1}^k|q_\ell||\overline{u}^{(\ell)}|^2]\, dt\Big)\leq C(\rho+\sigma),
\end{equation*}
so that 
\begin{equation}\label{I3vanishes}
    \limsup_{n\to+\infty}I^3_n\leq C(\rho+\sigma).
\end{equation}

We now show that $I^2_n$ is asymptotically negligible. We begin by studying the term involving the double well potential. We remark that the continuity of $W$ and (H3) imply there exists $C>0$ such that 
\begin{equation}\label{H3+ continuity}
    W(s)\leq W(t)+C\end{equation}
for every $s=(1-\theta)t+\theta t_0$ with $\theta\in [0,1)$ and $|t_0|\leq 1$. Indeed, if $|t|\leq 1$, then $|s|\leq 1$, so that $W(s)\leq \sup_{\tau\in[-1,1]}W(\tau)$ , while if instead $|t|>1$ then $s\leq (1-\theta)|t|+\theta |t_0|\leq |t|$, so that \eqref{H3+ continuity} follows from (H3).
Thus, recalling that $|\overline{u}_n|\leq 1$, from  \eqref{H3+ continuity}, \eqref{bound double well witH N}, \eqref{volume layers}, and \eqref{epsilon contained}  we deduce that 
\begin{align}\nonumber 
\frac{1}{\e_n}\int_{L^{i^*}_{n,\sigma,\rho}}W(w_n)\,dx&\leq  \frac{1}{\e_n}\Big(\int_{L^{i^*}_{n,\sigma,\rho}}W(v_n)\, dx+C\Ld( L^{i^*}_{n,\sigma,\rho})\Big)\\\label{bound double well}
 &\leq C\e_n+\frac{C\sigma}{\e_n\lceil\e_n^{-1}\rceil}\leq C(\e_n+\sigma).
 \end{align}

We now investigate the remaining terms in $I^2_n$. We note that for every  $\ell\in\{1,...,k\}$ and multi-index $\alpha$ of order $|\alpha|=\ell$ we have 
\begin{align}\nonumber
\partial^\alpha w_n&=\sum_{\beta\leq \alpha}\binom{\alpha}{\beta}\Big(\partial^\beta\phi_{n,\sigma,\rho}\partial^{\alpha-\beta}v_n+\partial^\beta(1-\phi_{n,\sigma,\rho})\partial^{\alpha-\beta}\overline{u}_n\Big)\\\nonumber
&=\phi_{n,\sigma,\rho}\partial^\alpha v_n + (1-\phi_{n,\sigma,\rho}) \partial^\alpha\overline{u}_n+ \sum_{0<\beta\leq\alpha}\binom{\alpha}{\beta}\big(\partial^\beta\phi_{n,\sigma,\rho}\,\partial^{\alpha-\beta}(v_n-\overline{u}_n)\big)
\end{align}
so that, by \eqref{eq:def cut offs lemma}, we deduce that for every $\ell\in \{1,...,k\}$ the following inequality holds
\begin{align*}
    |\nabla^{(\ell)}w_n|^2_\ell\leq C&\Bigl(|\nabla^{(\ell)}v_n|_\ell^2
    + |\nabla^{(\ell)}\overline{u}_n|_\ell^2 \\+&(\sigma\e_n)^{-2\ell}|v_n-\overline{u}_n|^2+\sum_{j=1}^{\ell-1}(\sigma\e_n)^{-2(\ell-j)}|\nabla^{(j)}(v_n-\overline{u}_n)|^2_j\Bigr).
\end{align*}
 Combining this inequality with \eqref{bounded derivatives layer}, \eqref{bounded derivatives layers 2}, \eqref{bound on L2strip}, and \eqref{epsilon contained} we obtain
\begin{align*}
\e_n^{2\ell-1}\| |\nabla^{(\ell)}w_n|_\ell\|_{L^2(L^{i^*}_{n,\sigma,\rho})}^2&\leq C\e_n+C\sigma^{-2\ell}\e_n^{-1}\int_{L^{i^*}_{n,\sigma,\rho}}|v_n-\overline{u}_n|^2\,dx\\
  &\,\,\,\,\,\,\,\,+C\sum_{j=1}^{\ell-1}\frac{\e_n^{2j-1}}{\sigma^{2(\ell-j)}}\||\nabla^{(j)}(v_n-\overline{u}_n)|_j\|^2_{L^{2}(L^{i^*}_{n,\sigma,\rho})}\\
  &\leq C\e_n+C{\sigma^{-2\ell}}\|v_n-\overline{u}_n\|^2_{L^2(\widetilde{Q})}+C\big(\sum_{j=1}^{\ell-1}\frac{1}{\sigma^{2(\ell-j)}}\big)\e_n.
\end{align*}
Finally, this last inequality, together with \eqref{bound double well}, yields
 \begin{equation}\label{estimate on I2}
     I^2_n\leq C\e_n+C\sigma+C\big(\sum_{\ell=1}^k{\sigma^{-2\ell}}\big)\|v_n-\overline{u}_n\|^2_{L^2(\widetilde{Q})}+C\big(\sum_{\ell=1}^{k}\sum_{j=1}^{\ell-1}\frac{1}{\sigma^{2(\ell-j)}}\big)\e_n.
 \end{equation}
Since $\{v_n-\overline{u}_n\}_n$ converges to $0$ in $L^2(\widetilde{Q})$ as $n\to+\infty$, by \eqref{eq:splitting functional}, \eqref{I1}, \eqref{I3vanishes}, and \eqref{estimate on I2}  we infer that
$$\limsup_{\rho\to 0}\limsup_{\sigma\to 0}\limsup_{n\to+\infty }F_{\e_n}(w_n,\widetilde{Q})\leq \lim_{n\to+\infty}F_{\e_n}(v_n,\widetilde{Q}).$$
Recalling that for every fixed $\sigma$ and $\rho$ the sequence $\{w_n\}_n=\{w_n^{n,\sigma,\rho}\}_n$ converges to $u_0^\nu$ in $L^2(\widetilde{Q})$ and is such that the support of $w_n-\overline{u}_n$ is compactly contained in $\widetilde{Q}$ for all $n\in\N$, we let $\sigma$ and $\rho$ vary in two positive vanishing sequences and use a diagonalization argument to conclude the proof.
\end{proof}

We are now ready to prove the liminf inequality. The proof is based on the blow-up argument of Fonseca and M\"uller \cite{FonsecaMuller}.
    \begin{proof} [Proof of Proposition \ref{prop:liminf}]
      Let us fix a positive sequence $\{\e_n\}_n$ converging to $0$ as $n\to+\infty$.  Without loss of generality, we may assume that 
        \begin{equation}\label{eq:energy bounded liminf}
            \liminf_{n\to+\infty}F_{\e_n}(u_n,\Omega)=\lim_{n\to+\infty} F_{\e_n}(u_n,\Omega)\leq C<+\infty.
        \end{equation}
       In light of Theorem \ref{compactness}, this implies that $u\in BV(\Omega;\{-1,1\})$ so that  $u=2\chi_E-1$ for $E$ a set of finite perimeter in $\Omega$ and $S(u)\cap \Omega=\partial^*E\cap\Omega $, where we recall that $\partial^*E$ denotes the reduced boundary of $E$.

For every $n\in \N$, we consider the measure $\mu_n$ defined by
    $$\mu_n(B):=F_{\e_n}(u_n,B)\quad \text{ for every }B \text{ Borel subset of } \Omega.$$
    Note that by Corollary \ref{interpolationwithnorms} and \eqref{eq:energy bounded liminf}, upon assuming $n$ large enough, the sequence $\{f_n:=\frac{W(u_n)}{\e_n}+\sum_{\ell=1}^kq_{\ell}\e^{2\ell-1}|\nabla^{(\ell)}u_n|^2\}_n$ is equibounded in $L^1(\Omega)$.
    Therefore, letting $|M|$ denote the total variation of a measure $M$, we have $\sup_n|\mu_n|(\Omega)\leq C$, so that, up to passing to a not relabelled subsequence, we may assume that $\{\mu_n\}_n$ and $\{|\mu_n|\}_n$ converge weakly* to two finite Radon measures $\mu$ and $\lambda$, respectively. We now show that $\mu\geq 0$.
    To this aim, we use the Besicovitch Derivation Theorem (see \cite[Theorem 1.153]{FonsecaLeoniBook}) to write for $|\mu|$-a.e.\ $x\in \Omega$ 
    \begin{equation*}
        \frac{d\mu}{d|\mu|}(x)=\lim_{r\to 0^+}\frac{\mu(B_r(x))}{|\mu|(B_r(x))},
    \end{equation*}
    where $d\mu/d|\mu|$ is the Radon-Nikod\'ym derivative of $\mu$ with respect to its total variation $|\mu|$. Let us fix a point $x$ such that the previous equality holds. Since the set $\{r\in (0,+\infty): \lambda(\partial B_r(x))>0\}$ is at most countable, we may choose a sequence of positive real numbers $\{r_m\}_m$, vanishing as $m\to +\infty$, such that 
    \begin{equation*}
         \frac{d\mu}{d|\mu|}(x)=\lim_{m\to +\infty}\lim_{n\to +\infty}\frac{1}{|\mu|(B_{r_m}(x))}  \int_{B_{r_m}(x)}f_n(y)\,dy.
    \end{equation*}
    For every $m\in\N$ fixed, by Corollary \ref{interpolationwithnorms} the integral in the right-hand side of the previous equality is non-negative for $n\in\N$ large enough, hence, we conclude that $d\mu/d|\mu|$ is non-negative $|\mu|$-a.e.\, and then $\mu\geq 0$.

    By the Radon-Nikod\'ym Theorem we write $\mu=f\Hd\mres(\partial^* E \cap \Omega)+\sigma$, for $f$ a non-negative function and for $\sigma$ a non-negative measure which is singular with respect to $\Hd\mres(\partial^*E\cap \Omega)$. 
    
    We now show that the function $f$ can be estimated from below in terms of the function $g$ defined by \eqref{density}.  To see this, for every $x\in\R^n$, $\nu\in\Sd$ we  consider $\widetilde{Q}^{\nu}(x)$ an open subset of $Q^\nu(x)$ having $C^2$ boundary and such that 
    \begin{equation}
    \label{Q rectangle}
             \widetilde{Q}^{\nu}(x)\cap \{y\in\Rd:|y\cdot\nu|\leq \tfrac{1}{4}\}=Q^\nu(x)\cap \{y\in\Rd:|y\cdot\nu|\leq \tfrac{1}{4}\}.
         \end{equation}
    For every $x\in\partial^*E\cap \Omega$ we consider $\nu=\nu_E(x)$ the inner unit normal to $\partial^*E\cap \Omega$ at $x$. By \eqref{Q rectangle}, using Theorem \ref{thm:structurethm} one can show that for $\Hd$-a.e.\ $x\in\partial^*E\cap \Omega$ we have 
         \begin{equation}\label{new ratio liminf}
             \lim_{r\to 0}\frac{\Hd(\widetilde{Q}_r^\nu(x)\cap \partial^*E) }{r^{d-1}}=1,
         \end{equation}
         where we used the notation $\widetilde{Q}_r^\nu(x)=\{y\in \R^n: y/r\in\widetilde{Q}^\nu(x)\}$.
    By the Besicovitch Derivation Theorem, for $\Hd$-a.e.\ $x\in \partial^*E\cap \Omega$ we have 
        \begin{equation}\label{eq:besicovitch 1}
            f(x)=\lim_{r\to 0^+}\frac{\mu(\widetilde{Q}_r^\nu(x))}{{\mathcal{H}^{d-1}(\widetilde{Q}_r^\nu(x)\cap  \partial^*E})},           
        \end{equation}
    Additionally, given $x\in \partial^*E\cap\Omega$ such that \eqref{eq:besicovitch 1} holds, since $\{\mu_n\}_n$ weakly*-converges to $\mu$, it follows from \eqref{new ratio liminf} and a diagonal argument that there exists a vanishing sequence $\{r_n\}_n$, with $\e_n\ll r_n$, such that   
        \begin{equation}\label{eq:besicovitch 2} f(x) = \lim_{n\to+\infty}\frac{\mu_n(\widetilde{Q}_{r_n}^\nu(x))}{r_n^{d-1}}.
        \end{equation}
        
\noindent Upon translating the point $x$, it is not restrictive to assume that $x=0\in\partial^*E\cap \Omega$; moreover, for the sake of notation and simplicity of exposition, we only discuss the case and $\nu_E(x)=e_d$, the other cases being analogous. 

We write $\widetilde{Q}$ in place of $\widetilde{Q}^{e_d}(0)$ and $\widetilde{Q}_n$ in place of $\widetilde{Q}_{r_n}^{e_d}(0)$. Then, we set $v_n(y):=u_n(r_ny)$ for every $y\in\widetilde{Q}$ and, by  Theorem \ref{thm:structurethm},  we have that $\{v_n\}_n$ converges in $L^2(\widetilde{Q})$ to the function $u^{e_d}_0$ defined by \eqref{def uzeronu}. We further set $\tau_n:=\e_n/r_n$ and use the change of variables $y=x/r_n$ to obtain
\begin{eqnarray}\label{eq:liminf change ofc}
           \frac{\mu_n(\widetilde{Q}_n)}{r_n^{d-1}}=\frac{F_{\e_n}(u_n,\widetilde{Q}_n)}{r_n^{d-1}}=\int_{\widetilde{Q}}\Bigl[\frac{1}{\tau_n}W(v_n)+\sum_{\ell=1}^{k}q_{\ell}\tau_n^{2\ell-1}|\nabla^{(\ell)}v_n|_\ell^2\Bigr]\, dy.        \end{eqnarray}
            Since $\{v_n\}_n$ satisfies all the hypotheses of Lemma  \ref{lemma:modifying sequences}, we can apply such Lemma to obtain  a sequence $\{w_n\}_n\subset H^k(\widetilde{Q})$ which satisfies {\it (i)}-{\it (iii)} of said Lemma.
            
\noindent Noting that $\tau_n\to 0$ as $n\to +\infty$, by \eqref{eq:besicovitch 2}, \eqref{eq:liminf change ofc} and {\it (iii)} of Lemma \ref{lemma:modifying sequences}, it follows 
\begin{equation}\label{liming gbarra} 
f(0)\geq \liminf_{n\to+\infty}\int_{\widetilde{Q}}\Bigl[\frac{1}{\tau_n}W(w_n)+\sum_{\ell=1}^kq_{\ell}\tau_n^{2\ell-1}|\nabla^{(\ell)}w_n|_\ell^2\Bigr]\,dy.
\end{equation}
Now we observe that, by {\it(ii)} of Lemma \ref{lemma:modifying sequences}, putting
\begin{equation*}
    w_n:= \overline{u}^{e_d}_{\tau_n}
\quad \text{ on }\,\, Q_1^{e_d}(0)\setminus \widetilde{Q}
\end{equation*}
for all $n\in\N$, we have $\{w_n\}_n\subset \mathcal{A}^{e_d}_{\tau_n}$. Moreover, 
since  $|\overline{u}^{e_d}(y)|=1$ if $y\in \{z:|z\cdot e_d|>\frac{1}{8}\}$, it follows that $|\overline{u}_{\tau_n}^{e_d}(y)|=1$ if $y\in \{z:|z\cdot e_d|> \frac{\tau_n}{8}\}$.

\noindent Therefore, by \eqref{Q rectangle}, we have
\begin{equation*}
f(0)\geq \liminf_{n\to+\infty}\int_{Q_1^{e_d}(0)}\Bigl[\frac{1}{\tau_n}W(w_n)+\sum_{i=1}^kq_{\ell}\tau_n^{2\ell-1}|\nabla^{(\ell)}w_n|_\ell^2\Bigr]\,dy,
\end{equation*}
and by the definition of $g$ in \eqref{density}, we conclude
\begin{equation*}
     f(0)\geq g(e_d).
\end{equation*}
Since the above argument holds for $\Hd$-a.e.\ $x\in \partial^*E \cap \Omega$ and every $\nu=\nu_E(x)$, we obtain 
 \begin{equation}\label{ultimaliminf}
     f(x)\geq g(\nu_E(x)).
 \end{equation}
Finally, by the inequality $\liminf_{n}\mu_n(\Omega)\geq \mu(\Omega)$ we infer that
\begin{equation*}
    \lim_{n\to+\infty} F_{\e_n}(u_n, \Omega)= \lim_{n\to +\infty}\mu_{n}(\Omega)\geq \mu(\Omega)=\int_{\partial^*\!E\cap \Omega}f(x)\, d\Hd(x)+\sigma(\Omega) 
\end{equation*}
and, recalling that $\sigma(\Omega)\geq 0$ and exploiting \eqref{ultimaliminf}, this last inequality leads to
$$
\lim_{n\to+\infty} F_{\e_n}(u_n, \Omega)\geq\int_{\partial^*\!E\cap \Omega}f(x)\, d\Hd(x)\geq\int_{\partial^*\!E\cap\Omega}g(\nu_E(x)) d\Hd(x),
$$
concluding the proof.
\end{proof}

\begin{remark} \label{rmk: liminf}
{\rm 
We note that, thanks to \eqref{liming gbarra}, as a by-product of the above proof we have also showed that
\begin{equation}\notag
    \Gamma\text{-}\liminf_{\e\to0} F_\e(u, \Omega)\geq
    \begin{cases}
\displaystyle\int_{\partial^*\!A\cap \Omega} \overline{g}(\nu_A)\, d\Hd & \text{ if } u\in BV(\Omega;\{-1,1\}),\, \text{ with } u=2\chi_A-1, \\
  +\infty & \text{ if } u\in L^2(\Omega)\setminus BV(\Omega;\{-1,1\}),
  \end{cases}
    \end{equation}
where
 \begin{equation*}
      \overline{g}(\nu):=\inf\Big\{\int_{{\widetilde{Q}^\nu}} \Bigl[\frac{1}{\e}W(v)+\sum_{\ell=1}^{k}q_\ell\e^{2\ell-1} |\nabla^{(\ell)}v|_\ell^2\Bigr]\,dx  :  v \in \overline{\A}^\nu_\e,\,\e\in(0,1)\Big\}
 \end{equation*}
 for every $\nu\in\Sd$ and we have set
\begin{equation*}
\overline{\A}_\e^\nu:=\bigl\{v\in H^k(\widetilde{Q}^\nu): v= \overline{u}^\nu_\e \text{ near } \partial \widetilde{Q}^\nu \bigr\}.
\end{equation*}
Such observation will be useful in order to prove that the $\Gamma$-limit is strictly positive. }
\end{remark}

\section{Upper bound}

In this section we conclude the proof of Theorem \ref{main theorem} by proving the limsup inequality. As in Section \ref{section lower bound}, $C$ denotes a generic positive constant that may change from line to line, but which is independent of the relevant parameters. We now state the main result of the section.
\begin{proposition}\label{gammalimsup}
Let $\Omega\subset \Rd$ 
be a bounded, open set with $C^1$ boundary. Let $\{\e_n\}_n$ be a positive sequence converging to $0$ and let $u\in BV(\Omega;\{- 1,+1\})$.  Assume that {\rm (H1)-(H3)} are satisfied.   Then there exists a sequence $\{u_n\}_n\subset H^k(\Omega) $ converging to $u$ in $L^2(\Omega)$ as $n\to+\infty$ and such that
    \begin{equation}\notag
        \limsup_{n\to+\infty}F_{\e_n}(u_n)\leq F_0(u).
    \end{equation}
\end{proposition}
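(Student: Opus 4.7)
The plan is to follow the strategy sketched in the introduction: first reduce to polyhedral interfaces by density and upper semicontinuity of $g$, then build the recovery sequence by patching near-optimal competitors of the cell formula \eqref{density} along each flat face, and finally glue them across the $(d{-}2)$-dimensional skeleton where different faces meet. Writing $u=2\chi_A-1$ and invoking the upper semicontinuity of $g$ from Proposition \ref{densitywelldef}, I would choose continuous functions $g_j\colon\Sd\to[0,+\infty)$ with $g_j\downarrow g$ pointwise. Theorem \ref{smoothapproximation} provides polyhedral sets $A_m$ with $\chi_{A_m}\to\chi_A$ in $L^1(\Omega)$ and satisfying \eqref{contlimit} for each $g_j$; using $g\leq g_j$ and monotone convergence in $j$ yields
\begin{equation*}
\limsup_{m\to+\infty}\int_{\partial^*\!A_m\cap\Omega}g(\nu_{A_m})\,d\Hd\leq\int_{\partial^*\!A\cap\Omega}g(\nu_A)\,d\Hd.
\end{equation*}
Since the $\Gamma$-$\limsup$ in \eqref{def Gammalimsup} is $L^2(\Omega)$-lower semicontinuous, a diagonal argument reduces the proof to the case in which $A$ is a polyhedron with $\Hd(\partial^*\!A\cap\partial\Omega)=0$.

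\emph{Local construction on each face.} I would decompose $\partial^*\!A\cap\Omega$ into finitely many relatively open flat faces $F_1,\ldots,F_N$ with inner unit normals $\nu_1,\ldots,\nu_N$. For $\eta>0$ fixed, Remark \ref{re.inftolim} delivers $T_j=T_j(\eta)>0$ and $v_j\in\widetilde{\A}_{T_j}^{\nu_j}$ with $|v_j|\leq 1$ (by truncation) such that
\begin{equation*}
\frac{1}{T_j^{d-1}}\int_{Q_{T_j}^{\nu_j}}\Bigl[W(v_j)+\sum_{\ell=1}^{k}q_\ell|\nabla^{(\ell)}v_j|_\ell^2\Bigr]\,dx\leq g(\nu_j)+\eta,
\end{equation*}
and $v_j=\overline{u}^{\nu_j}$ near $\partial Q_{T_j}^{\nu_j}$. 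I would tile a narrow slab around each $F_j$ by translated copies of $Q_{T_j\e_n}^{\nu_j}(z)$ whose centres $z$ form a $(T_j\e_n)$-lattice on $F_j$, and set $u_n$ inside each such cube equal to $v_j((\cdot-z)/\e_n)$. Adjacent cubes glue to all orders because both copies coincide with $\overline{u}^{\nu_j}(\cdot/\e_n)$ near their common face; moreover $\overline{u}^{\nu_j}(\cdot/\e_n)\equiv\pm 1$ outside $\{|x\cdot\nu_j|\leq\e_n/8\}$, so outside this strip $u_n$ already matches the bulk values of $u$ with no extra cut-off. The change of variables $y=x/\e_n$ makes each cube contribute $\e_n^{d-1}$ times the integral above, and summing over the $\sim\Hd(F_j)/(T_j\e_n)^{d-1}$ cubes that meet $F_j$ produces a contribution bounded by $\Hd(F_j)(g(\nu_j)+\eta)+o(1)$ as $n\to+\infty$.

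\emph{Gluing across edges and main obstacle.} The residual task, and the main technical obstacle, is the gluing along the $(d{-}2)$-dimensional skeleton $\Sigma$ of $A$, where slabs of different normals collide. I would pick a width $w_n=\e_n^{1/2}$ and, on the $w_n$-tube $T_n$ of $\Sigma$, interpolate between the two colliding profiles through a cut-off $\phi_n$ satisfying $\||\nabla^{(\ell)}\phi_n|_\ell\|_{L^\infty}\leq Cw_n^{-\ell}$ for every $\ell\in\{1,\ldots,k\}$. Because both profiles take values in $[-1,1]$, so does $u_n$ on $T_n$ and $W(u_n)$ remains bounded there; since $\Ld(T_n)=O(w_n^2)=O(\e_n)$ and $|\nabla^{(\ell)}u_n|_\ell^2\leq Cw_n^{-2\ell}$ on $T_n$, the tube's total contribution is controlled by
\begin{equation*}
C\sum_{\ell=1}^k|q_\ell|\e_n^{2\ell-1}w_n^{-2\ell}\,w_n^{2}=C\sum_{\ell=1}^k|q_\ell|\e_n^{\ell},
\end{equation*}
which vanishes as $n\to+\infty$. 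Summing over faces and edges and letting first $n\to+\infty$ and then $\eta\to 0$ yields $\limsup_n F_{\e_n}(u_n)\leq\sum_{j=1}^N\Hd(F_j)\,g(\nu_j)=F_0(u)$. The delicacy is that $k$ is arbitrary and the norms $|\cdot|_\ell$ are general, so all derivatives up to order $k$ of the interpolating cut-off must be balanced simultaneously on a set of vanishing volume; the choice $w_n=\e_n^{1/2}$ tunes each of the $k$ energy terms to order $\e_n^{\ell}$, making them all negligible.
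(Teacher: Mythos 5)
Your reduction to polyhedral interfaces (Step 3 of the paper, via $g_j\downarrow g$, Theorem \ref{smoothapproximation}, and lower semicontinuity of the $\Gamma$-limsup) is correct and matches the paper essentially verbatim, and your local construction on each flat face is the paper's Step 1 rewritten as a tiling rather than as periodic extension plus Riemann--Lebesgue; these two steps are fine.

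The genuine gap is in the gluing across the skeleton $\Sigma$, and it is threefold. First, you require $|v_j|\leq 1$ ``by truncation'', but truncating an $H^k$ function at $\pm1$ destroys $H^k$-regularity for every $k\geq 2$, so the competitors you glue need not be bounded, and $W$ of the interpolant $\phi_n u_n^i+(1-\phi_n)u_n^j$ is then not controlled by continuity of $W$ alone. The paper sidesteps this by gluing each face profile $u_n^i$ not to a neighbouring face profile but to the mollification $\widetilde{u}_n=u*\Psi_{\e_n}$, which automatically satisfies $\|\widetilde{u}_n\|_{L^\infty}\leq 1$, and then invoking hypothesis (H3) in the form \eqref{H3+ continuity} to get $W(\phi\widetilde{u}_n+(1-\phi)u_n^i)\leq W(u_n^i)+C$; note that (H3) is explicitly listed among the hypotheses of the proposition, and your argument never uses it. Second, when $d\geq3$, more than two faces meet at lower-dimensional strata of $\Sigma$, so ``interpolate between the two colliding profiles'' is not well defined there; the single global reference $\widetilde{u}_n$ resolves this cleanly. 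Third, your pointwise bound $|\nabla^{(\ell)}u_n|_\ell^2\leq Cw_n^{-2\ell}$ on the tube $T_n$ is wrong: on $T_n$ the interpolant still carries the profile derivatives, which are of size $\e_n^{-\ell}\gg w_n^{-\ell}$ in the $O(\e_n)$-thick transition layers of the colliding slabs, so the correct estimate requires a decomposition of $T_n$ into the thin profile layers (measure $\sim w_n\e_n$, derivative $\e_n^{-\ell}$) and the rest (measure $\sim w_n^2$, derivative $w_n^{-\ell}$). The resulting total $\sim w_n+\sum_\ell\e_n^\ell$ does vanish with $w_n=\e_n^{1/2}$, so your choice of scale is salvageable, but as written the derivative bound and the arithmetic that follows are not justified. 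The paper avoids this delicate $n$-dependent scale altogether by cutting off at a fixed $\sigma$ independent of $n$, letting the $\e_n^{2\ell-1}$ weights annihilate the fixed $\sigma^{-\ell}$ factors as $n\to+\infty$, and only sending $\sigma\to0$ at the very end; this is considerably simpler to make rigorous for arbitrary $k$.
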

\begin{proof}
We subdivide the proof in three parts. 

{\it Step 1.}
First, we consider the case where $\Omega$ has Lipschitz boundary (and not necessarily $C^1$) and $u=2\chi_A - 1$, where 
\begin{align*}
    A:=\{x \in \Omega: (x-x_0)\cdot \nu >0\}
\end{align*}
for some $x_0 \in \Omega$ and $\nu \in \Sd$ such that
\begin{equation}\label{transv}
    \nu_\Omega(x) \perp \nu \ \text{ for every } x \in \partial \Omega \text{ with } |(x-x_0)\cdot \nu| \text{ small enough},
\end{equation}
recalling that $\nu_\Omega(x)$ is the inner unit normal to $\Omega$ at $x$. For the sake of exposition, we assume $x_0=0$ and $\nu=e_d$, the other cases being analogous, so that for every $x\in\Omega$ we have $u(x)=u_0^{e_d}(x)$, where by definition $u_0^{e_d}(x)=\text{sgn}(x\cdot e_d)$, and 
\begin{equation*}
    F_0(u)=g(e_d)\mathcal{H}^{d-1}\left (\Omega \cap \{x_d=0\}\right).
\end{equation*}

For every $\eta>0$, there exist $0<\e_0<1$ and $v\in \A_{\e_0}^{e_d}$ such that $v=\overline{u}^{e_d}_{\e_0}$ in a neighborhood of $\partial Q^{e_d}_1$, and
    \begin{equation}\label{minv}
        \int_{Q_1^{e_d}}\Bigl[\frac{1}{\e_0}W(v)+ \sum_{\ell=1}^k q_\ell \e_0^{2\ell-1} |\nabla^{(\ell)} v|_{\ell}^2\Bigr]dx \le g(e_d) + \eta.
    \end{equation} 
By Proposition \ref{densitywelldef}, upon rotating $Q^{e_d}_1$ leaving $e_d$ fixed, we may assume that $Q^{e_d}_1=Q:=(-\frac{1}{2},\frac{1}{2})^d$. We then extend  $v$ to a function in $H^k_{\rm loc}(\Rd)$ which is periodic in the first $d-1$ variables; that is, such that 
\begin{equation}\label{v is periodic step1}
    v(x)=v(x+e_i),
\end{equation}
for all $i\in\{1,...,d-1\}$. We also set $\Omega_n:=\{x\in\Omega:|x_d|\leq \tfrac{\e_n}{2\e_0}\}$ for every $n\in\N$.
    
We claim that a recovery sequence for $u$ is given by the sequence of functions $\{u_n\}\subset H^k(\Omega)$ defined by
\begin{equation}\label{def recovery step 1}
    u_n(x):=
    \begin{cases}
     v\big(\frac{\e_0x }{\e_n}\big) & \text{if }  x \in \Omega_n ,\\
    u(x) & \text{if }   x \in \Omega\setminus \Omega_n.
    \end{cases}   
    \end{equation}
To prove the claim, we first show that
\begin{equation}\label{L2 convergence step 1}
    \|u_n -u\| _{L^2(\Omega)}\to 0 \text{ as } n \to +\infty.
\end{equation}
To this aim, we set $\Omega' := \big\{x' \in \R^{d-1} : (x',0)\in \Omega\big\}$ and observe that by (\ref{transv}), it holds 
\begin{equation*}\Omega \cap\Omega_n= \Omega'\times \Bigl[-\frac{\e_n}{2\e_0}, \frac{\e_n}{2\e_0}\Bigr]\end{equation*}
for $n$ sufficiently large. 

\noindent Using the change of variables $(x',x_d)\to (x', \frac{\e_n}{\e_0} x_d)$, by Fubini's Theorem and \eqref{def recovery step 1} we have 
\begin{align*}
  \int_{\Omega_n}|u_n |^2\, dx&= \int_{\Omega'}\int_{-\frac{\e_n}{2\e_0}}^\frac{\e_n}{2\e_0} \big|v\Bigl(\frac{\e_0 x'}{\e_n},\frac{\e_0 x_d}{\e_n} \Bigr)\big|^2 dx_d dx'  \\
   & = \frac{\e_n}{\e_0}  \int_{-\frac{1}{2}}^{\frac{1}{2}} \int_{\Omega'} \big|v\Bigl(\frac{\e_0 x'}{\e_n}, x_d\Bigr) \big|^2 dx' dx_d.
\end{align*}
We observe that, since $v\in H^k_{\rm loc}(\Rd)$, the function  $f\colon \R^{d-1} \to \R$ defined  by  \begin{equation*}
    f(y)=\int_{-\frac{1}{2}}^\frac{1}{2} \left|v(y, x_d)\right|^2 dx_d\quad \text{ for every } y\in\R^{d-1}
\end{equation*} is integrable on $\Omega'$ and that, by definition of $v$,
it is periodic. We may then apply the Riemann-Lebesgue Lemma to the function $f$ to obtain
\begin{align}\notag
    \lim_{n\to +\infty } \int_{\Omega'} \int_{-\frac{1}{2}}^{\frac{1}{2}} \big|v\Big(\frac{\e_0x'}{\e_n}, x_d\Big) \big|^2 dx_d \,dx' 
    &=\int_{\Omega'} \int_{(-\frac{1}{2},\frac{1}{2})^{d-1}} \int_{-\frac{1}{2}}^\frac{1}{2} \left| v(y', x_d)\right|^2 dx_d \,dy'\, dx'\\\label{primo Riemann}
       &=\mathcal{L}^{d-1}(\Omega ')\| v\|_{L^2(Q)}^2,
\end{align}
which implies 
\begin{equation}\label{bound recory with target}
   \int_{\Omega_n} |u_n |^2\,dx \leq \frac{\e_n}{\e_0}\|v\|^2_{L^2(Q)}.
\end{equation}
From this inequality, \eqref{def recovery step 1}, and the fact that $u$ takes values only in $\{\pm 1\}$, we deduce that
\begin{align*}
    \|u_n -u\| _{L^2(\Omega)} ^2 
    \le  2\int_{\Omega_n}|u_n |^2\,dx  + 2\mathcal{L}^{d}(\Omega_n)\leq C\frac{\e_n}{\e_0}+2\mathcal{L}^{d}( \Omega_n),
\end{align*}
which tends to $0$ as $n\to +\infty$. This inequality proves \eqref{L2 convergence step 1}.

We now show that $\{u_n\}_n$ is a recovery sequence for $u$. To this aim, we first note that for every $x\in \Omega\setminus \Omega_n$ we have
\begin{equation*}
    u_n(x)=u(x) \in \{-1, 1\} \,\,\text{ and } \,\,\nabla ^{(\ell)}u_n(x)=0 \ \text{ for every }\ell\in\{1,...,k\},
\end{equation*}
while, if $x\in\Omega_n$, we have
\begin{equation*}
  \nabla ^{(\ell)} u_n(x)= \Big(\frac{\e_0}{\e_n}\Big)^{\ell} \nabla^{(\ell)} v\Big(\frac{\e_0 x}{\e_n}\Big) \quad \text{for every }\ell\in\{1,...,k\}.
\end{equation*}
Then, we  use (H1), \eqref{def recovery step 1}, Fubini's Theorem, and a change of variable to obtain
\begin{align}\nonumber
    F_{\e_n}(u_n)&= \int_{\Omega_n } \Bigl[\frac{1}{\e_n}W(u_n) + \sum_{\ell=1}^k q_\ell \e_n^{2\ell-1} |\nabla^{(\ell)} u_n|_\ell^2 \Bigr]dx\\
    &=\nonumber \int_{\Omega'} \int_{-\frac{\e_n}{2\e_0}}^{\frac{\e_n}{2\e_0}} \Bigl[ \frac{1}{\e_n} W\Big(v\Big(\frac{\e_0 x'}{\e_n},\frac{\e_0 x_d}{\e_n}\Big)\Big) + \sum_{\ell=1}^k q_\ell \e_n^{2\ell} \big|\nabla^{(\ell)} v\Big(\frac{\e_0 x}{\e_n},\frac{\e_0 x_d}{\e_n}\Big)\big|_\ell^2 \Bigr] dx_d \, dx'\\\label{conclusion step 1} 
    &=\int_{\Omega '} \int_{-\frac{1}{2}}^{\frac{1}{2}}\Bigl[ \frac{1}{\e_0}W \Big(v\Big(\frac{\e_0x'}{\e_n}, x_d \Big) \Bigl) + \sum_{\ell=1}^k q_\ell\e_0^{2\ell-1} \big |\nabla^{(\ell)} v\Big(\frac{\e_0 x'}{\e_n}, x_d\Big)\big |_\ell^2 \Bigr] dx_d dx'
\end{align}
for every $n\in\N$.
Since $v$ satisfies \eqref{minv}, the function $h\colon\R^{d-1}\to \Rd$ defined for $y\in\R^{d-1}$ by
\begin{equation*}
h(y):= \int_{-\frac{1}{2}}^\frac{1}{2} \Bigl[\frac{1}{\e_0}W(v(y', x_d))+ \sum_{\ell=1}^k q_\ell \e_0^{2\ell-1} |\nabla^{(\ell)} v (y', x_d)|_\ell^2 \Bigr] dx_d 
\end{equation*}
is integrable and, by \eqref{v is periodic step1}, is also periodic. Therefore, we may apply once again the Riemann-Lebesgue Lemma to obtain
\begin{align*}
    \lim_{n \to +\infty} \int_{\Omega '}  &\int_{-\frac{1}{2}}^\frac{1}{2}  \Bigl[\frac{1}{\e_0}W\Big(v\Big(\frac{\e_0x'}{\e_n}, x_d\Big)\Big)+ \sum_{\ell=1}^k q_\ell \e_0^{2\ell-1} \big |\nabla^{(\ell)} v\Big(\frac{\e_0x'}{\e_n}, x_d\Big)\big|_\ell^2 \Bigr] dx_d\, dx' \\
   &= \mathcal{L}^{d-1}(\Omega')\int_{Q}\Bigl[\frac{1}{\e_0}W(v)+ \sum_{\ell=1}^k q_\ell \e_0^{2\ell-1} |\nabla^{(\ell)} v|_{\ell}^2\Bigr]dx.
\end{align*}
Combining this equality with \eqref{conclusion step 1} and (\ref{minv}), we conclude that
\begin{align*}
    \lim_{n\to +\infty }F_{\e_n}(u_n)=
   \mathcal{L}^{d-1}(\Omega')\int_{Q}\Bigl[\frac{1}{\e_0}W(v)+ \sum_{\ell=1}^k q_\ell \e_0^{2\ell-1} |\nabla^{(\ell)} v|_{\ell}^2\Bigr]dx
   \le F_0(u) + \mathcal{L}^{d-1}(\Omega ')\eta.
\end{align*}
Since $\eta$ was arbitrary, a diagonal argument allows to prove the thesis in this special case.

 {\it Step 2.}
We now assume that $u=2\chi_A- 1$, with $A=P\cap \Omega$ and $P$ polyhedral. This means that $\partial P=\bigcup_{i=1}^M H_i \cup F$, where the sets $(H_i)_{i=1}^M$ are pairwise disjoint and relatively open convex polyhedra of dimension $d-1$, while $F$ is the union of a finite number of convex polyhedra of dimension $d-2$. In particular, there exists $\nu_1,..., \nu_M \in \Sd$ and $x_1,..., x_M \in \R^d$ such that
$$
H_i \subseteq \{x \in \R^d: (x-x_i) \cdot \nu_i =0   \},
$$
and each $\nu_i$ is the inner unit normal to $\partial P$ on $H_i$. For simplicity,
we may also assume that $\partial \Omega \cap \partial P$ is the union of a finite number of $C^1$ manifolds of dimension $d-2$.

Let us fix $0<\sigma<1$ and  let $H_1',..., H_M'$ relatively open subsets of $H_1,...,H_M$ with $(d-2)$-dimensional boundary of class $C^1$, such that
\begin{equation}\notag
    \big\{ x \in H_i \cap \Omega: \text{dist}(x, \partial \Omega \cup F) \ge \tfrac{\sigma}{2}\big\} \subseteq H_i' \subseteq \overline{H_i'}\subseteq H_i.
\end{equation}
Fix $0<\eta< \frac{\sigma}{2}$ and define for each $i\in\{1,...,M\}$
\begin{equation}\notag
    \Omega_i:=\{x + t\nu_i: x \in H_i', \,t \in (-\eta, \eta) \}.
\end{equation}
We can choose $\eta$ so small  that the sets $\Omega_1,..., \Omega_M$ are pairwise disjoint. 
Since each $\Omega_i,$ satisfies (\ref{transv}), by the previous step, we obtain sequences $\{u_n^i\}_n\subset H^k(\Omega_i)$ such that
\begin{equation}\label{convergence L2 on Omegai}
    u_n^i \to u \ \text{in} \ L^2(\Omega_i),
\end{equation}
\begin{equation}\notag
    u_n^i(x)= u(x) \,\,\text{ for every } x\in\big\{x \in \overline \Omega _i:  \text{dist}(x, H_i) \ge \tfrac{\e_n}{2\e^i_0}\big\},
\end{equation}
and 
\begin{equation}\label{recoveri aux step 2}
    \limsup_{n\to\infty} F_{\e_n} (u_n^i, \Omega_i) \le \left(g(\nu_i)+\eta\right) \mathcal{H}^{d-1}(\Omega_i \cap H_i),        
\end{equation}
where for every $x\in\Omega_i$ it holds \begin{equation*}
    u^i_n(x):=
    \begin{cases}
     v^i\big(\frac{\e^i_0x }{\e_n}\big) & \text{if }  |(x-x_i)\cdot \nu_i|\leq\frac{\e_n}{2\e_0^i}, \\
    u(x) & \text{if }   |(x-x_i)\cdot \nu_i|>\frac{\e_n}{2\e_0^i},
    \end{cases}   
    \end{equation*}
and, for each $i\in\{1,...,M\}$, $v^i$ and $\e_0^i$ are such that 
\begin{equation}\notag 
        \int_{Q_1^{\nu_i}}\Bigl[\frac{1}{\e^i_0}W(v)+ \sum_{\ell=1}^k q_\ell (\e_0^i)^{2\ell-1} |\nabla^{(\ell)} v|_{\ell}^2\Bigr]dx \le g(\nu_i) + \eta.
    \end{equation} 
    
In order to present the candidate recovery sequence, we let $(\Psi_\e)_{\e>0}$ be a radial smooth convolution kernel, with $\text{supp}\Psi_\e\subset B_\e(0)$ and $\int\Psi_\e\,dx=1$ for every $\e>0$.  
We extend  $u$ to the whole $\R^d$ by setting $u:=2\chi_{P}-1$ and introduce  smooth functions $\widetilde{u}_n$ defined for every $n\in\N$ by 
\begin{equation}\notag
    \widetilde{u}_n:= u * \Psi_{\e_n},
\end{equation}
where $*$ denotes the operation of convolution.
Observe that these functions satisfy the 
following conditions
\begin{gather}\label{prop2bar}
     \widetilde{u}_n(x)= u(x) \,\text{ for every } x\in \{x\in \Omega:\text{dist}(x, \partial P) \ge {\e_n}\},\\\label{prop2bar 2}
    \|\widetilde{u}_n\|_{L^\infty(\Rd)}\leq 1\,\,\,\text{ and }\,\,\, \||\nabla ^{(\ell)}\widetilde{u}_n|_\ell\|_{L^\infty(\Rd)}\leq \frac{C}{\e^{\ell}} \text{ for every }\ell\in\{1,...,k\}. 
\end{gather}    
For $\tau\in(0,1)$ we set 
\begin{equation}\notag
U_\tau:=\{x \in \Omega: \text{dist}(x, F \cup (\partial \Omega \cap \partial P))\le \tau \}
\end{equation}
and choose cut-off functions $\phi_{\sigma}\in C^\infty_c(\Rd; [0,1])$ such that 
\begin{align}\label{properties cut off step2}
    \hspace{2.5 cm}\begin{cases}
    \phi_{\sigma}(x)=1\quad &\text{if }x\in U_\sigma,\\    
    \phi_{\sigma}(x)=0\quad &\text{if }x\in \R^d \setminus U_{2\sigma},\\
\||\nabla^{(\ell)}\phi_{\sigma}|_\ell\|_{L^\infty(\R^d)}\leq C\sigma^{-\ell}\quad &\text{ for every }\ell\in\{1,...,k\}.
    \end{cases}
\end{align}
We claim that the sequence of functions $\{u_{n}\}_n\subset H^k(\Omega)$ defined by 
\begin{align}\notag
    \hspace{2.5 cm}
    u_n:=\begin{cases}   
    (1-\phi_\sigma )u_n^i+\phi_\sigma\widetilde{u}_n\quad &\text{in } \overline{\Omega}_i \text{ for } \ i\in\{1,...M\} ,\\
    \widetilde {u}_n \quad &\text{in } \Omega':=\Omega \setminus \cup_{i=1}^M \overline{\Omega}_i 
    \end{cases}
\end{align}
is a recovery sequence for $u$.

The fact that $\{u_n\}_n\subset H^k(\Omega)$ follows from the same argument used in \cite[Theorem 1.3, Substep 2B]{ChermisiDalMaso11}. Also, \eqref{convergence L2 on Omegai}
and the fact that $\widetilde{u}_n\to u$ in $L^2(\Omega)$ imply that $u_n\to u$ in $L^2(\Omega)$ as $n\to+\infty$.

We now study the asymptotic behaviour of the energy of $\{u_n\}$. First, we show that $F_{\e_n}(u_n,\Omega')$ is negligible. To this aim, for every $n\in\N$ we set 
\begin{equation}\label{def Rn}
  \mathcal{R}_n:= \Big\{x \in \Omega: \text{dist} (x, \partial P) \le \max \big\{\e_n, \frac{\e_n}{2\e^1_0},...,\frac{\e_n}{2\e^M_0}  \big\} \Big\}.\end{equation}   We observe that $\Omega'\cap\mathcal{R}_n\subset U_\sigma$ for $n$ large enough and that  $\Hd(\partial P\cap U_\sigma)\leq C\sigma$. Hence,  by the definition of $\Omega'$, \eqref{prop2bar}, and \eqref{prop2bar 2}, we have
\begin{equation}\label{R_n}
    F_{\e_n}(u_n,\Omega') = \int_{\Omega'\cap \mathcal{R}_n}\Bigl[\frac{1}{\e_n}W(\widetilde{u}_n)+ \sum_{\ell=1}^k |q_\ell|\e_n^{2\ell-1} |\nabla^{(\ell)} \widetilde{u}_n|_{\ell}^2\Bigr]dx\leq C\sigma.
        \end{equation}

We now study the energetic contribution of the remaining terms. We fix $i\in\{1,...,M\}$ and compute
\begin{align}\notag
    F_{\e_n}(u_n, \Omega_i)&=F_{\e_n}(u_n^i, \Omega_i \setminus U_{2\sigma})+ F_{\e_n}(u_n, \Omega_i \cap (U_{2\sigma}\setminus U_\sigma))+F_{\e_n}(\widetilde{u}_n, \Omega_i \cap U_{\sigma})\\\notag
    &\le F_{\e_n}(u_n^i, \Omega_i \setminus U_{2\sigma})
    + \int_{\Omega_i \cap (U_{2\sigma}\setminus U_\sigma)} \Bigl[\frac{1}{\e_n}W(u_n)+\sum_{\ell=1}^{k}|q_\ell|\e_n^{2\ell-1}|\nabla^{(\ell)}u_n|_\ell^2\Bigr]\,dx \\\notag
    &\quad\,\,\,+\int_{\Omega_i \cap U_\sigma} \Bigl[\frac{1}{\e_n}W(\widetilde{u}_n)+\sum_{\ell=1}^{k}|q_\ell|\e_n^{2\ell-1}|\nabla^{(\ell)}\widetilde{u}_n|_\ell^2\Bigr]\,dx\\\notag
    &=: \mathcal{K}_n^1 + \mathcal{K}_n^2 + \mathcal{K}_n^3.
\end{align}
We separately investigate the three terms $\mathcal{K}^1_n,$ $ \mathcal{K}^2_n$, and $\mathcal{K}^3_n$, starting from $\mathcal{K}^1_n$. Recalling that $q_k=1$, we observe that 
\begin{align}\notag
    \mathcal{K}_n^1 &= \int_{\Omega_i \setminus U_{2\sigma}} \Bigl[\frac{1}{\e_n}W(u_n^i) + \e_n^{2k-1}|\nabla^{(k)}u_n^i|_k^2\Bigr]\,dx 
    + \int_{\Omega_i \setminus U_{2\sigma}}  \Bigl[\sum_{\ell=1}^{k-1} q_\ell \e_n
    ^{2\ell-1}|\nabla^{(\ell)}{u}_n^i|_\ell^2\Bigr]\,dx \\\notag
    & \leq \int_{\Omega_i } \Bigl[\frac{1}{\e_n}W(u_n^i) + \e_n^{2k-1}|\nabla^{(k)}u_n^i|_k^2\Bigr]\,dx +\int_{\Omega_i }  \Bigl[\sum_{\ell=1}^{k-1} q_\ell \e_n^{2\ell-1}|\nabla^{(\ell)}u_n^i|_\ell^2\Bigr]\,dx\\\notag
    &\,\quad- \int_{\Omega_i \cap U_{2\sigma}}  \Bigl[\sum_{\ell=1}^{k-1} q_\ell \e_n^{2\ell-1}|\nabla^{(\ell)}{u}_n^i|_\ell^2\Bigr]\,dx\\\label{conti K1}
    & \le F_{\e_n}(u_n^i, \Omega_i) + \int_{\Omega_i \cap U_{2\sigma}}  \Bigl[\sum_{\ell=1}^{k-1} |q_\ell| \e_n^{2\ell-1}|\nabla^{(\ell)}{u}_n^i|_\ell^2\Bigr]\,dx.
\end{align}
 By the periodicity of $u_n^i$ with respect to the variables tangential to $H_i$, we may argue as in {\it Step 1}, using Fubini's Theorem and the Riemann-Lebesgue Lemma to deduce that 
\begin{align}\notag
    \lim_{n\to +\infty} \int_{\Omega_i \cap U_{2\sigma}}  \Bigl[\sum_{\ell=1}^{k-1} |q_\ell| \e_n^{2\ell-1}|\nabla^{(\ell)}{u}_n^i|_\ell^2\Bigr]\,dx\leq C\sigma\int_{Q_1^{\nu_i}}\Big[\sum_{\ell=1}^{k-1}|\nabla^{(\ell)}v^i|_\ell^2\Big]\, dx\leq C\sigma.
\end{align}
Hence, it follows from \eqref{conti K1} and \eqref{recoveri aux step 2} that 
\begin{equation}\label{stima K1}
    \limsup_{n\to+\infty}\mathcal{K}_n^1\leq (g(\nu_i)+\eta)\Hd(H_i\cap \Omega_i)+C\sigma.
\end{equation}

We now investigate the term $\mathcal{K}^2_n$. We begin observing that for every  $\ell\in\{1,...,k\}$ and multi-index $\alpha$ of order $|\alpha|=\ell$, we have 
\begin{align}\nonumber
\partial^\alpha u_n&=\sum_{\beta\leq \alpha}\binom{\alpha}{\beta}\Big(\partial^\beta(1-\phi_\sigma)\partial^{\alpha-\beta}u_n^i+\partial^\beta\phi_{\sigma}\partial^{\alpha-\beta}\widetilde{u}_n\Big)\\\nonumber
&=(1-\phi_{\sigma})\partial^\alpha u^i_n + \phi_{\sigma}\partial^\alpha\widetilde{u}_n+ \sum_{0<\beta\leq\alpha}\binom{\alpha}{\beta}\big(\partial^\beta\phi_{\sigma}\,\partial^{\alpha-\beta}(\widetilde{u}_n-u_n^i)\big),
\end{align} 
which by \eqref{properties cut off step2} implies
\begin{align}\notag
    |\nabla^{(\ell)}u_n|^2_\ell\leq C&\Bigl(|\nabla^{(\ell)}u^i_n|_\ell^2
    + |\nabla^{(\ell)}\widetilde{u}_n|_\ell^2 \\ \label{K_2derivate}
    +&\sigma^{-2\ell}|\widetilde{u}_n-
    u^i_n|^2+\sum_{j=1}^{\ell-1}\sigma^{-2(\ell-j)}|\nabla^{(j)}(\widetilde{u}_n-u^i_n)|^2_j\Bigr).
\end{align}
Since by (H$3$) we have that
\begin{equation}\label{K_2pozzo}
    W(u_n) \leq W(u^i_n)+C,
\end{equation}
 it follows from \eqref{def Rn}, \eqref{K_2derivate}, and \eqref{K_2pozzo} that
\begin{align*}
\mathcal{K}_n^2 & \leq  C\int_{\Omega_i \cap (U_{2\sigma}\setminus U_\sigma)\cap \mathcal{R}_n} \Big[\frac{1}{\e_n}W(u^i_n)+\frac{1}{\e_n}+\sum_{\ell=1}^{k}|q_\ell|\e_n^{2\ell-1}\Big(|\nabla^{(\ell)}u_n^i|_\ell^2+\sum_{j=1}^{\ell-1}\sigma^{-2(\ell-j)}|\nabla^{(j)}u^i_n|^2_j\Big)\Big]\,dx\\
&\quad+ C\int_{\Omega_i \cap (U_{2\sigma}\setminus U_\sigma)\cap \mathcal{R}_n} \!\!\Big[\sum_{\ell=1}^{k}|q_\ell|\e_n^{2\ell-1}\Big(|\nabla^{(\ell)}\widetilde{u}_n|_\ell^2+\sigma^{-2\ell}|u^i_n-\widetilde{u}_n|^2+\sum_{j=1}^{\ell-1}\sigma^{-2(\ell-j)}|\nabla^{(j)}\widetilde{u}_n|^2_j\Big)\Big]\,dx.
\end{align*}
 Arguing as in \eqref{primo Riemann} and \eqref{conclusion step 1}, we obtain that
\begin{align}\notag
  \lim_{n\to+\infty}  \int_{\Omega_i \cap (U_{2\sigma}\setminus U_\sigma)\cap \mathcal{R}_n} &\Big[\frac{1}{\e_n}W(u^i_n)+\frac{1}{\e_n}+\sum_{\ell=1}^{k}|q_\ell|\e_n^{2\ell-1}\Big(|\nabla^{(\ell)}u_n^i|_\ell^2+\sum_{j=1}^{\ell-1}\sigma^{-2(\ell-j)}|\nabla^{(j)}u^i_n|^2_j\Big)\Big]\,dx \\ 
    &\leq C\mathcal{H}^{d-1}(H_i \cap U_{2\sigma}) \le C\sigma.\label{K_2pozzobis}
\end{align}
 Then, we observe that using \eqref{bound recory with target}, with $u_n$ replaced by $u_n^i$, $v$ replaced by $v^i$, and $\Omega$ replaced by $\Omega_i$, together with  \eqref{prop2bar 2}, we have 
\begin{align}\notag
    \int_{\Omega_i \cap (U_{2\sigma}\setminus U_\sigma)\cap \mathcal{R}_n} &\Big[\sum_{\ell=1}^{k}|q_\ell|\e_n^{2\ell-1}\Big(|\nabla^{(\ell)}\widetilde{u}_n|_\ell^2+\sigma^{-2\ell}|\widetilde{u}_n-u^i_n|^2+\sum_{j=1}^{\ell-1}\sigma^{-2(\ell-j)}|\nabla^{(j)}\widetilde{u}_n|^2_j\Big)\Big]\,dx \\&\leq \notag
   \frac{C}{\e_n}\Big(\sum_{\ell=1}^k\Big(1+\sum_{j=1}^{\ell-1}\frac{\e_n^{2(\ell-j)}}
   {\sigma^{2(\ell-j)}}\Big)\Big)\Ld\big(\Omega_i \cap (U_{2\sigma}\setminus U_\sigma)\cap \mathcal{R}_n\big)\\\notag &\quad\,
+C\sum_{\ell=1}^k\Big(\frac{\e_n^{2\ell}}{\sigma^{2\ell}}\|v^i\|^2_{L^2(\Omega_i)}  
   + \frac{\e_n^{2\ell-1}}{\sigma^{2\ell}}\Ld\big(\Omega_i \cap (U_{2\sigma}\setminus U_\sigma)\cap \mathcal{R}_n\big)\Big) \\
   &\leq \label{conclusion step 2 2}C\Big(\sum_{\ell=1}^k\Big(1+\sum_{j=1}^{\ell-1}\frac{\e_n^{2(\ell-j)}}
   {\sigma^{2(\ell-j)}}\Big)\Big)\sigma+C\sum_{\ell=1}^k\Big(\frac{\e_n^{2\ell}}{\sigma^{2\ell}}+\frac{\e_n^{2\ell}}{\sigma^{2\ell-1}}\Big).
\end{align}
As \begin{equation*}
\sum_{\ell=1}^k\sum_{j=1}^{\ell-1}\frac{\e_n^{2(\ell-j)}}
   {\sigma^{2(\ell-j)}}+\sum_{\ell=1}^k\Big(\frac{\e_n^{2\ell}}{\sigma^{2\ell}}+\frac{\e_n^{2\ell}}{\sigma^{2\ell-1}}\Big)\longrightarrow 0\quad\text{ for $n\to+\infty$},
\end{equation*}
combining \eqref{K_2pozzobis} with \eqref{conclusion step 2 2}, we obtain that 
\begin{equation}\label{Stima K2}
    \limsup_{n\to+\infty }\mathcal{K}^2_n\leq C\sigma.
\end{equation} 

To conclude, we use  \eqref{prop2bar 2} to  estimate the remaining term $\mathcal{K}^3_n$ as 
\begin{align}\notag
    \mathcal{K}_n^3 &\le \int_{ U_\sigma\cap \{x:|(x-x_i)\cdot \nu_i|\le \e_n\}} \Bigl[\frac{1}{\e_n}W(\widetilde{u}_n)+\sum_{\ell=1}^{k}|q_\ell|\e_n^{2\ell-1}|\nabla^{(\ell)}\widetilde{u}_n|_\ell^2\Bigr]\,dx\\\label{stima K3}
    & \le \frac{C}{\e_n}\mathcal{L}^d \left(\left\{x \in U_\sigma : |(x-x_i)\cdot \nu_i| \le \e_n\right \}\right)
    \le C \mathcal{H}^{d-1} (U_\sigma \cap H_i)
    \le C\sigma.
\end{align}
Finally, gathering \eqref{R_n}, \eqref{stima K1},\eqref{Stima K2},  \eqref{stima K3}, and summing over $i\in\{1,...,M\}$ we obtain that 
\begin{align*}
    \limsup_{n\to+\infty}F_{\e_n}(u_n, \Omega) & \leq \sum_{i=1}^M\limsup_{n\to+\infty} F_{\e_n}(u_n,\Omega_i)+\limsup_{n\to+\infty}F_{\e_n}(u_n,\Omega')\\
    &\ =\sum_{i=1}^m(g(\nu_i)+\eta)\Hd(\Omega_i\cap H_i)+C\sigma\leq F_0(u)+C(\eta+\sigma).
\end{align*}
Since $\eta>0$ and $\sigma>0$ were arbitrary, a diagonal arguments allows to conclude.

{\it Step 3.}
Finally, we deal with the general case  $u=2\chi_A -1$, for $A\subseteq \Omega$ a set of relatively finite perimeter in $\Omega$. By Theorem \ref{smoothapproximation}, we can find a sequence of polyhedral sets $\{A_n\}_n$ such that $\chi_{A_n} \to \chi_A$ in $L^1(\Omega)$. By the previous step, setting $u_n:= 2\chi_{A_n}-1$,
\begin{equation}\notag
 \Gamma\text{-} \limsup_{\e \to 0} F_\e(u_n) \le  F_0(u_n) \text{ for every } n \in \N,
\end{equation}
where we recall that the $\Gamma$-limsup is the functional defined by  \eqref{def Gammalimsup}. By the lower semicontinuity of the  $\Gamma$-limsup we  then infer
\begin{align}\notag
    \Gamma \text{-}\limsup_{\e \to 0} F_\e(u) \le \liminf _{n \to \infty} \ \Gamma \text{-}\limsup_{\e \to 0} F_\e(u_n) \le \liminf _{n \to \infty} F_0(u_n).
\end{align}
Since $g$ is upper semicontinous on $\Sd$ (see Proposition \ref{densitywelldef} above), by \cite[Remark 9.11, Theorem 9.15]{DalMaso} there exists a sequence $\{g_m\}_m$ of Lipschitz continuous functions such that $g \le g_{m+1}\le g_m \le \max_{\Sd} g $ and
$$
g(\nu)= \inf _{m \in \N} g_m(\nu) \ \text{ for every } \nu \in \Sd.$$
Using again Theorem \ref{smoothapproximation} and  (\ref{contlimit}), we obtain for every $m\in\N$ that 
\begin{align*}
    \lim_{n\to + \infty} \int_{\partial^*\! A_n \cap \Omega} g_m(\nu_{A_n}(x)) d\mathcal{H}^{d-1}(x)=\int_{\partial^* \!A \cap \Omega} g_m(\nu_{A}(x)) d\mathcal{H}^{d-1}(x)
\end{align*}
so that by monotonicity
\begin{align*}
    \liminf_{n \to \infty} F_0(u_n)
    & =\liminf_{n \to \infty} \int_{\partial^*\! A_n \cap \Omega} g(\nu_{A_n}(x)) d\mathcal{H}^{d-1}(x) \\
    & \le 
     \lim_{n \to \infty} \int_{\partial ^*\! A_n \cap \Omega} g_m(\nu_{A_n}(x)) d\mathcal{H}^{d-1}(x) \\
     & =\int_{\partial^*\!A \cap \Omega} g_m(\nu_{A}(x)) d\mathcal{H}^{d-1}(x).
\end{align*}
Finally, by Monotone Convergence we obtain that
\begin{align*}
     \liminf_{n \to \infty} F_0(u_n)  & \le \lim_{m \to \infty} \int_{\partial^* \!A \cap \Omega} g_m(\nu_{A}(x)) d\mathcal{H}^{d-1}(x)  \\
     & \le \int_{\partial^* \!A \cap \Omega} g(\nu_{A}(x)) d\mathcal{H}^{d-1}(x) = F_0(u),
\end{align*}
 which concludes the proof.
\end{proof}

We conclude this section proving that the surface energy density $g$ is strictly positive. This observation makes the $\Gamma$-limit non trivial.

\begin{proposition}\label{positivedensity} Let $g$ be the function defined in \eqref{density} with $k>1$ an integer and let $\mathcal{N}:=\{\ell\in\{1,...,k-1\}: q_\ell \leq 0\}$.  Assume that {\rm (H1)-(H3)} are satisfied, and, if  $\mathcal{N}\neq \emptyset$, further assume that $q_\ell >-\overline{q}_\ell$ for all $\ell\in\mathcal{N}$, where $\{\overline{q}_\ell : \ell\in\mathcal{N}\}$ denote the same positive constants appearing in Corollary \ref{interpolationwithnorms}. Then $\inf_{\Sd} g>0$.
 \end{proposition}
      \begin{proof}
Similarly to Section $5$, we introduce $\widetilde{Q}^\nu$ an open set with $C^2$ boundary contained in $Q^\nu$ such that
\begin{equation*}
    \widetilde{Q}^{\nu}\cap \{y\in\Rd:|y\cdot\nu|\leq \tfrac{1}{4}\}=Q^\nu\cap \{y\in\Rd:|y\cdot\nu|\leq \tfrac{1}{4}\}
         \end{equation*}
and, letting $\{\nu_1,...,\nu_{d-1},\nu\}$ denote an orthonormal basis describing $Q^\nu$,  with the additional property  that
\begin{equation}\label{Rnu}
    \widetilde{Q}^{\nu}\cap \bigcap_{i=1}^{d-1} \,\{y\in\Rd:|y\cdot\nu_i|\leq \tfrac{1}{4}\}=Q^\nu\cap \bigcap_{i=1}^{d-1} \,\{y\in\Rd:|y\cdot\nu_i|\leq \tfrac{1}{4}\}=:R^\nu.
         \end{equation}
Recalling the notation in Remark \ref{rmk: liminf}, we note that the function $\overline{g}:\Sd\to\R$ given by
 \begin{equation*}
      \overline{g}(\nu):=\inf\Big\{\int_{{\widetilde{Q}^\nu}} \Bigl[\frac{1}{\e}W(v)+\sum_{\ell=1}^{k}q_\ell\e^{2\ell-1} |\nabla^{(\ell)}v|_\ell^2\Bigr]\,dx  :  v \in \overline{\A}^\nu_\e,\e\in(0,1)\Big\}
 \end{equation*}
satisfies $\overline{g}(\nu)\geq g(\nu)$ for every $\nu\in\Sd$. We combine this observation with Remark \ref{rmk: liminf} and Proposition \ref{gammalimsup} to infer that
\begin{align*}
    \Gamma\text{-}\liminf_{\e\to0} F_\e(u, \Omega) & \geq
    \begin{cases}
\displaystyle\int_{\partial^*\!A\cap \Omega} \overline{g}(\nu_A)\, d\Hd & \text{ if } u\in BV(\Omega;\{-1,1\}),\, \text{ with } u=2\chi_A-1, \\
  +\infty & \text{ if } u\in L^2(\Omega)\setminus BV(\Omega;\{-1,1\}), 
  \end{cases} \\
  & \geq \begin{cases}
\displaystyle\int_{\partial^*\!A\cap \Omega} g(\nu_A)\, d\Hd & \text{ if } u\in BV(\Omega;\{-1,1\}),\, \text{ with } u=2\chi_A-1, \\
  +\infty & \text{ if } u\in L^2(\Omega)\setminus BV(\Omega;\{-1,1\})  
  \end{cases} \\
  & \geq \Gamma\text{-}\limsup_{\e\to0} F_\e(u, \Omega),
    \end{align*}
which, in turn, implies that $\overline{g}(\nu)= g(\nu)$ for every $\nu\in\Sd$. For this reason, we equivalently prove that $\inf_{\Sd} \overline{g}>0$.
    
Fix $\nu\in \Sd$, and let $\{\e_n\}_n$, $v_n\in \overline{\A}^\nu_{\e_n},$ $ n\in\N$ such that
\begin{equation*}
    \overline{g}(\nu)=\lim_{n\to+\infty} \int_{{\widetilde{Q}^\nu}} \Bigl[\frac{1}{\e_n}W(v_n)+\sum_{\ell=1}^{k}q_\ell\e_n^{2\ell-1} |\nabla^{(\ell)}v_n|_\ell^2\Bigr]\,dx.
\end{equation*}
Arguing as in Remark \ref{re.inftolim}, it is not restrictive to suppose $\e_n\to0$ and moreover, we may further assume that for all $n\in\N$ it holds $\e_n<\e_0(\{q_\ell:\ell\in\ \mathcal{N}\}, \widetilde{Q}^\nu)$, the constant appearing in Corollary \ref{interpolationwithnorms}.

Clearly, the set $R^\nu$ defined in \eqref{Rnu} is contained in $\widetilde{Q}^\nu$; then, we apply Corollary \ref{interpolationwithnorms} with $\Omega=\widetilde{Q}^\nu$ and, by the equivalence of the norms, we obtain that 
\begin{align*}
     \int_{\widetilde{Q}^\nu} \Bigl[\frac{1}{\e_n}W(v_n)+\sum_{\ell=1}^kq_\ell\e_n^{2\ell-1}|\nabla^{(\ell)}v_n|_\ell^2\Bigr]\,dx & \geq \delta \int_{\widetilde{Q}^\nu}\Bigl[\frac{1}{\e_n}W(v_n)+\sum_{\ell=1}^k\e_n^{2\ell-1}|\nabla^{(\ell)}v_n|_\ell^2\Bigr]\,dx \\
     & \geq \delta \int_{R^\nu}\Bigl[\frac{1}{\e_n}W(v_n)+\sum_{\ell=1}^k\e_n^{2\ell-1}|\nabla^{(\ell)}v_n|_\ell^2\Bigr]\,dx \\
     & \geq \delta' \int_{R^\nu}\Bigl[\frac{1}{\e_n}W(v_n)+\e_n^{2k-1}\|\nabla^{(k)}v_n\|_k^2\Bigr]\,dx
\end{align*}
for every $n\in\N$ and some $\delta'>0$ which is independent of $\widetilde{Q}^\nu$.
Applying Fubini's Theorem, \eqref{operatorial}, and a change of variable, we obtain
\begin{align*}
 \int_{R^\nu}\Bigl[\frac{1}{\e_n}W(v_n)+\e_n^{2k-1}\|\nabla^{(k)}v_n\|_k^2\Bigr]\,dx   & \geq \int_{R^\nu \cap \nu^\perp}\int_{-\frac{1}{2}}^{\frac{1}{2}}\Bigl[\frac{1}{\e_n} W(v_n^{\nu,y})+ \e_n^{2k-1}|(v_n^{\nu,y})^{(k)}|^2\Bigr]dt\,d\Hd(y)  \\
  & = \int_{R^\nu \cap \nu^\perp}\int_{-\frac{1}{2\e_n}}^{\frac{1}{2\e_n}} \bigl[W\bigl(w^{\nu,y}_{n})+ |(w_{n}^{\nu,y})^{(k)}|^2\bigr]\,dt\,\Hd(y) \\
  & \geq \frac{1}{2^{d-1}} \inf\Big\{ \int_{-\frac{1}{2\e_n}}^{\frac{1}{2\e_n}} \!\!\bigl[W(w){+}|w^{(k)}|^2\bigr]dt, w \in H^k\bigl(\bigl(-\frac{1}{2\e_n}, \frac{1}{2\e_n}\bigr)\bigr) ,  \\
        &  \qquad \qquad \quad  w(t)=\text{sgn}(t) \text{ if } |t|>M \text{ for some } M\in\Bigl(0,\frac{1}{2\e_n}\Bigr)\Bigl\},
\end{align*}
 where we set $w_{n}^{\nu,y}(t):= v_{n}^{\nu,y}(\e_n t)$ and in the last inequality we used that $v_n\in \overline{\A}_{\e_n}^\nu$ for all $n\in\N$. As (H$1$) and (H$2$) are satisfied, we are in position to apply Propositions $4.1$ and $4.2$ in \cite{BDS}, deducing that
\begin{equation*}
\overline{g}(\nu)\geq \liminf_{n\to+\infty}\delta' \int_{R^\nu}\Bigl[\frac{1}{\e_n}W(v_n)+\e_n^{2k-1}\|\nabla^{(k)}v_n\|_k^2\Bigr]\,dx  \geq \frac{\delta'}{2^{d-1}} m_k > 0, 
\end{equation*}
    $m_k$ being as in \eqref{minBDS}, which concludes the proof. 
    \end{proof}

\section{Analysis of the cell problem in dimension \texorpdfstring{1}{}}

In this final section we analyze the surface energy density $g$ defined in \eqref{density} when $d=1$. In this instance, such density clearly is a constant that we prove being determined by an optimal-profile problem on the real line. It is not restrictive to assume that $|\cdot|_\ell$ coincides with the absolute value for every $\ell\in\{1,...,k\}$ as any norm on $\R$ is a multiple of the euclidean one.

By Remark \ref{re.inftolim} and by a change of variable, we find that
\begin{align*}
    g= \inf_{T>\frac{1}{2}} \inf\Big\{\int_{-T}^{T} \Bigl[W(v)+\sum_{\ell=1}^{k}q_\ell |v^{(\ell)}|^2\Bigr]\,dt : &\ v \in  H^k((-T, T)), \\
    & v(t)=\text{sgn}(t) \text{ if } |t|>M \text{ for some } M\in(0,T)\Big\} \\
    & \hspace{-6.5cm} =\lim_{T\to+\infty} \inf\Big\{\int_{-T}^{T} \Bigl[W(v)+\sum_{\ell=1}^{k}q_\ell |v^{(\ell)}|^2\Bigr]\,dt : \ v \in  H^k((-T, T)), \\
    & v(t)=\text{sgn}(t) \text{ if } |t|>M \text{ for some } M\in(0,T)\Big\},
\end{align*}
hence, for $T>1/2$, we set 
\begin{align*}
     m(T):=\inf\Big\{\int_{-T}^{T} \Bigl[W(v)+\sum_{\ell=1}^{k}q_\ell |v^{(\ell)}|^2\Bigr]\,dx, & \ v \in  H^k((-T,T)), \\
       &v(t)=\text{sgn}(t) \text{ if } |t|>M \text{ for some } M\in(0,T)\Big\},
\end{align*}
in such a way that
\begin{equation*}
   g= \lim_{T\to+\infty} m(T),
\end{equation*}
and we put
\begin{equation}\label{optimalprofile}
    m:=\inf \Bigl\{ \int_{-\infty}^{+\infty}\Bigl[W(u)+\sum_{\ell=1}^kq_\ell|u^{(\ell)}|^2\Bigr]\,dt: u\in H^k_{\text{loc}}(\R), \lim_{t\to\pm\infty} u(t)=\pm1 \Bigr\}.
\end{equation}
The main result of this section is that, under the by now usual, suitable assumptions on the coefficients $q_1,...,q_{k-1}$, the equality $g=m$ holds. In order to prove this, we state the one-dimensional version of Corollary \ref{interpolationwithnorms} at scale $\e=1$. 

\begin{corollary}\label{interpolationwithnorms1d}  Let $I\subset \R$ be an open interval of length $1$, let $k>1$
be an integer, and let $\mathcal{N}:=\{\ell\in\{1,...,k-1\}: q_\ell \leq 0\}$.  Assume that {\rm (H2)} is satisfied, and, if $\mathcal{N}\neq \emptyset$, further assume that $q_\ell >-\overline{q}'_\ell$ for all $\ell\in\mathcal{N}$, where, letting $\{q'_\ell : \ell\in\mathcal{N}\}$ denote the same positive constants appearing in Corollary \ref{nonlinint1dell} and given $\{\alpha_\ell : \ell\in \mathcal{N}\}$ such that $\alpha_\ell>0$ for all $\ell\in\mathcal{N}$ and $\sum_{\ell\in\mathcal{N}}\alpha_\ell=1$, it holds that $\overline{q}'_\ell=\alpha_\ell q'_\ell$ for all $\ell\in\mathcal{N}$. Then, there exists a  positive constant $\delta=\delta(\{q_\ell: \ell\in\mathcal{N}\})$ such that  
\begin{equation*}
\int_I\Bigl[W(u)+\sum_{\ell=1}^kq_\ell|u^{(\ell)}|^2\Bigr]\,dt \geq \delta \int_I\Bigl[W(u)+\sum_{\ell=1}^k|u^{(\ell)}|^2\Bigr]\,dt
\end{equation*}
for every $u\in H^k(I)$.  
 \end{corollary}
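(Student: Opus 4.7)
The plan is to adapt the proof of Corollary \ref{interpolationwithnorms} to the one-dimensional, unscaled setting, replacing the use of Theorem \ref{nonlinintd} with Corollary \ref{nonlinint1dell} applied on an interval of length $1$. Since $|I|=1$, the latter corollary yields, for every $\ell\in\{1,\ldots,k-1\}$ and every $q<q'_\ell$,
\begin{equation*}
    q\int_I|u^{(\ell)}|^2\,dt\leq\int_I\bigl[W(u)+|u^{(k)}|^2\bigr]\,dt,
\end{equation*}
or, equivalently, for every $q>-q'_\ell$,
\begin{equation*}
    \int_I\bigl[W(u)+|u^{(k)}|^2\bigr]\,dt+q\int_I|u^{(\ell)}|^2\,dt\geq 0\quad\text{for all }u\in H^k(I).
\end{equation*}
This replaces \eqref{interpolaitonfunctional} in the proof of Corollary \ref{interpolationwithnorms}; no scale parameter $\e$ is involved, so the threshold $\e_0(q,\Omega)$ disappears entirely.

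The second step is the same convex-combination trick. Given $\{\alpha_\ell:\ell\in\mathcal{N}\}$ with $\alpha_\ell>0$ and $\sum_{\ell\in\mathcal{N}}\alpha_\ell=1$, and assuming $q_\ell>-\alpha_\ell q'_\ell$ for every $\ell\in\mathcal{N}$, I would pick $\delta\in(0,1)$ small enough that
\begin{equation*}
    \frac{1}{\alpha_\ell}\frac{q_\ell-\delta}{1-\delta}>-q'_\ell\quad\text{for every }\ell\in\mathcal{N},
\end{equation*}
which is possible since the strict inequality already holds at $\delta=0$. Writing
\begin{align*}
    \alpha_\ell W(u)+q_\ell|u^{(\ell)}|^2+\alpha_\ell|u^{(k)}|^2
    &=(1-\delta)\Bigl[\alpha_\ell W(u)+\tfrac{q_\ell-\delta}{1-\delta}|u^{(\ell)}|^2+\alpha_\ell|u^{(k)}|^2\Bigr]\\
    &\quad+\delta\Bigl[\alpha_\ell W(u)+|u^{(\ell)}|^2+\alpha_\ell|u^{(k)}|^2\Bigr],
\end{align*}
integrating and applying the interpolation inequality above (with $q=\tfrac{1}{\alpha_\ell}\tfrac{q_\ell-\delta}{1-\delta}$) to the first bracket kills its contribution and produces, for each $\ell\in\mathcal{N}$,
\begin{equation*}
    \int_I\bigl[\alpha_\ell W(u)+q_\ell|u^{(\ell)}|^2+\alpha_\ell|u^{(k)}|^2\bigr]\,dt
    \geq\delta\int_I\bigl[\alpha_\ell W(u)+|u^{(\ell)}|^2+\alpha_\ell|u^{(k)}|^2\bigr]\,dt.
\end{equation*}

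Finally, for indices $\ell\in\{1,\ldots,k-1\}\setminus\mathcal{N}$ we have $q_\ell>0$, so a trivial estimate $\int_I q_\ell|u^{(\ell)}|^2\,dt\geq\delta'\int_I|u^{(\ell)}|^2\,dt$ holds with $\delta':=\min\{q_\ell:\ell\in\{1,\ldots,k-1\}\setminus\mathcal{N}\}\wedge 1$; the same applies to the top-order term, where $q_k=1$. Summing over $\ell\in\mathcal{N}$ (using $\sum_{\ell\in\mathcal{N}}\alpha_\ell=1$ so that the $W(u)$ and $|u^{(k)}|^2$ contributions reassemble correctly) and adding the estimates for $\ell\notin\mathcal{N}$, and finally shrinking $\delta$ to $\min\{\delta,\delta'\}$, yields the claimed inequality. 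There is no genuine obstacle here: the argument is structurally identical to the proof of Corollary \ref{interpolationwithnorms}, only simpler because the one-dimensional interpolation on an interval of length $1$ is available without the $\Omega$-dependent threshold produced by the slicing step in Theorem \ref{nonlinintd}.
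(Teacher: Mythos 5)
Your proposal is correct and matches the paper's own proof, which simply says to follow the same line as Corollary \ref{interpolationwithnorms}, replacing Theorem \ref{nonlinintd} by Corollary \ref{nonlinint1dell}; you carry out the convex-combination trick, the handling of $\ell\notin\mathcal{N}$, and the final summation exactly as in that proof, correctly noting that the $\e$-threshold disappears because the one-dimensional estimate on an interval of length $1$ needs no rescaling.
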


\begin{proof}
    The proof is obtained by following the same line of the one of Corollary \ref{interpolationwithnorms}, with Corollary \ref{nonlinint1dell} used for every $\ell\in\{1,...,k-1\}$ in place of Theorem \ref{nonlinintd}.
\end{proof}

\begin{proposition} Let $k>1$ be an integer, assume that $q_\ell>-\overline{q}'_\ell$ for all $\ell\in\mathcal{N}$, with $\mathcal{N}$ and $\{\overline{q}'_\ell : \ell\in\mathcal{N}\}$ as in Corollary \ref{interpolationwithnorms1d}, and assume that {\rm (H1)} and {\rm (H2)} are satisfied. Then $m=\lim_{T\to+\infty}m(T)$.
\end{proposition}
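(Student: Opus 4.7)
The inequality $m \le \lim_{T\to+\infty} m(T)$ is easy: given $v\in H^k((-T,T))$ admissible for $m(T)$, since $v\equiv \text{sgn}(t)$ outside $(-M,M)$, one may extend $v$ by $\text{sgn}$ to all of $\R$ obtaining a function in $H^k_{\text{loc}}(\R)$ with $\lim_{t\to\pm\infty}v(t)=\pm 1$ and with the same total energy as $v$; taking the infimum yields $m\le m(T)$. Note also that $m(T)$ is non-increasing in $T$ by a similar extension argument, so the limit exists.

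For the converse, fix $\eta>0$ and pick an admissible competitor $u$ for $m$ with $\int_{\R}[W(u)+\sum_{\ell=1}^k q_\ell|u^{(\ell)}|^2]\,dt\le m+\eta$. Summing Corollary \ref{interpolationwithnorms1d} on every unit interval $(j,j+1)$, $j\in\mathbb{Z}$, yields
\[
\delta\int_{\R}\Bigl[W(u)+\sum_{\ell=1}^{k}|u^{(\ell)}|^2\Bigr]\,dt\le\int_{\R}\Bigl[W(u)+\sum_{\ell=1}^{k}q_\ell|u^{(\ell)}|^2\Bigr]\,dt\le m+\eta,
\]
so $W(u)+\sum_\ell|u^{(\ell)}|^2\in L^1(\R)$. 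Combined with $u(t)\to\pm 1$ at $\pm\infty$, this allows one to pick $t_-<0<t_+$ with $|t_\pm|$ large enough that
\[
\int_{(t_--1,t_-)}\Bigl[W(u)+\sum_{\ell=1}^k|u^{(\ell)}|^2\Bigr]\,dt+\int_{(t_+,t_++1)}\Bigl[W(u)+\sum_{\ell=1}^k|u^{(\ell)}|^2\Bigr]\,dt\le\eta,
\]
and simultaneously $|u+1|$ is uniformly small on $(t_--1,t_-)$ and $|u-1|$ is uniformly small on $(t_+,t_++1)$. I would then pick cut-offs $\phi_\pm\in C^\infty(\R;[0,1])$ with $\phi_+\equiv 1$ on $(-\infty,t_+]$, $\phi_+\equiv 0$ on $[t_++1,\infty)$, and $\phi_+^{(j)}(t_+)=\phi_+^{(j)}(t_++1)=0$ for $1\le j\le k$, and analogously for $\phi_-$, and set
\[
\widetilde{u}(t):=\begin{cases} -1 & \text{if }t\le t_--1,\\ -1+\phi_-(t)(u(t)+1) & \text{if }t_--1<t< t_-,\\ u(t) & \text{if }t_-\le t\le t_+,\\ 1+\phi_+(t)(u(t)-1) & \text{if }t_+<t<t_++1,\\ 1 & \text{if }t\ge t_++1. \end{cases}
\]
The matching of all derivatives up to order $k$ at the four junctures places $\widetilde{u}$ in $H^k_{\text{loc}}(\R)$, and $\widetilde{u}=\text{sgn}(t)$ outside $[t_--1,t_++1]$, so $\widetilde{u}$ is admissible for $m(T)$ whenever $T>\max(|t_--1|,t_++1)$.

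To conclude, I would split the energy of $\widetilde{u}$ into contributions from the five regions. The two outer tails vanish ($\widetilde{u}\equiv\pm 1$ there), and in the central region $[t_-,t_+]$ the integrand is unchanged; its integral is bounded by $m+\eta$ because the tails $\int_{(-\infty,t_--1)}[\,\cdot\,]\,dt$ and $\int_{(t_++1,\infty)}[\,\cdot\,]\,dt$ of $u$'s energy are non-negative by Corollary \ref{interpolationwithnorms1d} applied to each unit subinterval. In each transition zone, the Leibniz formula $\widetilde{u}^{(\ell)}=\sum_{j=0}^\ell\binom{\ell}{j}\phi_+^{(j)}(u-1)^{(\ell-j)}$ together with the boundedness of $\|\phi_+^{(j)}\|_{L^\infty}$ and the $L^2$-smallness of $(u-1)^{(\ell-j)}$ yields $\int_{(t_+,t_++1)}|\widetilde{u}^{(\ell)}|^2\,dt\le C_k\,\eta$; moreover, the pointwise bound $|\widetilde{u}-1|\le|u-1|$ (which is small) and the continuity of $W$ at $1$ with $W(1)=0$ give $\int_{(t_+,t_++1)}W(\widetilde{u})\,dt=o_\eta(1)$. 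Adding up, $m(T)\le m+\eta+o_\eta(1)$; sending $T\to+\infty$ and then $\eta\to 0$ proves $\lim_{T\to+\infty}m(T)\le m$. The main obstacle is keeping the transition-zone energy under control when some $q_\ell$ are negative: the crucial point is that Corollary \ref{interpolationwithnorms1d} both gives the $L^1(\R)$-summability of $W(u)+\sum_\ell|u^{(\ell)}|^2$ and certifies the non-negativity of the truncated tails, so the gluing proceeds without invoking (H3).
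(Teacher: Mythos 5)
Your proposal is correct and follows essentially the same strategy as the paper: truncate an almost-optimal competitor $u$ for $m$ to $\pm 1$ in the far field via a cutoff in the transition zone, then control the transition-zone energy by combining (H2) with Corollary \ref{interpolationwithnorms1d} (to pass from the signed-coefficient energy to the $L^2$-sums), the $L^1(\R)$-summability of $W(u)+\sum_\ell|u^{(\ell)}|^2$, and the continuity of $W$ near the wells. The paper sends a translation parameter $n\to+\infty$ rather than tracking an explicit $\eta$ in the tail, but the estimates are the same.
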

\begin{proof}
    Clearly, $m(T)\geq m$ for every $T$; hence, $\lim_{T\to+\infty} m(T) \geq m$.

    To prove the converse, let $u\in H^k_{\text{loc}}(\R)$ be such that $\lim_{t\to\pm\infty} u(t)=\pm1$ and $\int_{-\infty}^{+\infty}[W(u)+\sum_{\ell=1}^kq_\ell|u^{(\ell)}|^2]\,dt<+\infty$, and let $T_o>0$ be such that $||u(t)|-1|<1/2$ if $|t|>T_o$.

    Consider a function $\phi\in C^{\infty}(\R;[0,1])$ that is supported in $(-\infty,1/2)$ and such that $\phi(t)=1$ for every $t<0$. For every $n\in\N$ we set
    \begin{equation*}
        v_n(t):=\phi(t-n)u(t)+1-\phi(t-n), 
    \end{equation*}
and we observe that $v_n$ coincides with $u$ on $(-\infty, n)$ and it is constantly equal to $1$ on $(n+1/2,+\infty)$. As a consequence 
\begin{align*}
\int_{-\infty}^{+\infty}\Bigl[W(v_n)+\sum_{\ell=1}^kq_\ell|v_n^{(\ell)}|^2\Bigr]\,dt & =  \int_{-\infty}^{n-\frac{1}{2}}\Bigl[W(u)+\sum_{\ell=1}^kq_\ell|u^{(\ell)}|^2\Bigr]\,dt \\
 & \quad +\int_{n-\frac{1}{2}}^{n+\frac{1}{2}}\Bigl[W(v_n)+\sum_{\ell=1}^kq_\ell|v_n^{(\ell)}|^2\Bigr]\,dt.
\end{align*}
We estimate the energy of the function $v_n$ on the interval $(n-1/2,n+1/2)$; that is, we estimate
\begin{equation}\label{energyvn}
    \int_{n-\frac{1}{2}}^{n+\frac{1}{2}}\Bigl[W(v_n)+\sum_{\ell=1}^kq_\ell|v_n^{(\ell)}|^2\Bigr]\,dt.
\end{equation}
First, we observe that $\min\{u(t),1\}\leq v_n(t)\leq \max\{u(t),1\}$ for all $t\in \R$ and $n\in\N$; therefore, since $u(t)\to 1$ as $t\to+\infty$, for every $\e>0$ there exists $T_\e>0$ such that $|v_n(t)-1|< \e$ for all $t>T_\e$ and $n\in\N$. Hence, the continuity of $W$ and (H$1$) imply that 
\begin{equation}\label{potentialestimate}
    \lim_{n\to+\infty}\int_{n-\frac{1}{2}}^{n+\frac{1}{2}}W(v_n)\,dt =0.
\end{equation}
As for the derivatives of $v_n$, we observe that for all $\ell\in\{1,...,k-1\}$, there exists a positive constant $C$ depending on $k,$ $\ell$, and the function $\phi$ such that
\begin{equation*}
    |v_n^{(\ell)}(t)|^2 \leq C \Bigl\{|u(t)-1|^2 +\sum_{j=1}^\ell |u^{(j)}(t)|^2\Bigr\} \qquad \text{ for all } t\in\R, 
\end{equation*}
therefore,
\begin{align*}
    \Bigl|\int_{n-\frac{1}{2}}^{n+\frac{1}{2}}\Bigl[\sum_{\ell=1}^kq_\ell|v_n^{(\ell)}|^2\Bigr]\,dt\Bigr| & \leq q_0 \sum_{\ell=1}^k \int_{n-\frac{1}{2}}^{n+\frac{1}{2}}\Bigl[ C \Bigl\{|u-1|^2 +\sum_{j=1}^\ell |u^{(\ell)}|^2\Bigr\}  \Bigr]\,dt\\
    & \leq q_0Ck \int_{n-\frac{1}{2}}^{n+\frac{1}{2}} \Bigl[W(u) + \sum_{\ell=1}^k|u^{(\ell)}|^2\Bigr]\,dt,
\end{align*}
where $q_0:=\max\{|q_1|, ...,|q_{k-1}|,1\}$, and the last inequality follows by (H$2$) assuming that $n-1>T_o$.

\noindent By Corollary \ref{interpolationwithnorms1d},  there exists a 
positive $\delta$ such that
\begin{equation*}
\int_{n-\frac{1}{2}}^{n+\frac{1}{2}}\Bigl[W(u)+\sum_{\ell=1}^kq_\ell|u^{(\ell)}|^2\Bigr]\,dt \geq \delta \int_{n-\frac{1}{2}}^{n+\frac{1}{2}}\Bigl[W(u)+\sum_{\ell=1}^k|u^{(\ell)}|^2\Bigr]\,dt
\end{equation*}
for every $n$; therefore  
\begin{equation}\label{derivativesestimate}
    \Bigl|\int_{n-\frac{1}{2}}^{n+\frac{1}{2}}\Bigl[\sum_{\ell=1}^kq_\ell|v_n^{(\ell)}|^2\Bigr]\,dt\Bigr| \leq \frac{q_0Ck}{\delta}\int_{n-\frac{1}{2}}^{n+\frac{1}{2}}\Bigl[W(u)+\sum_{\ell=1}^kq_\ell|u^{(\ell)}|^2\Bigr]\,dt,
\end{equation}
which tends to $0$ as $n\to+\infty$.

Gathering \eqref{potentialestimate} and \eqref{derivativesestimate}, we obtain that \eqref{energyvn} vanishes as $n\to+\infty$. With a similar reasoning, we obtain functions $\{w_n\}_n\subset H^k_{\text{loc}}(\R)$ which coincide with $u$ on $(-n,n)$, which are equal to the sign function outside $(-n-\frac{1}{2},n+\frac{1}{2})$, and such that
\begin{equation*}
\int_{-\infty}^{+\infty}\Bigl[W(w_n)+\sum_{\ell=1}^kq_\ell|w_n^{(\ell)}|^2\Bigr]\,dt  =  \int_{-n-\frac{1}{2}}^{n+\frac{1}{2}}\Bigl[W(u)+\sum_{\ell=1}^kq_\ell|u^{(\ell)}|^2\Bigr]\,dt +o_n(1).
\end{equation*}
Since each function $w_n$ is admissible for the minimum problem $m(n+1)$, letting $n\to+\infty$ the conclusion follows by the arbitrariness of $u$.
\end{proof}

Finally, we prove the existence of an optimal-profile. We omit some minor details in the proof as the arguments involved are similar to those that appear in the proofs of Proposition $4.1, 4.2$ in \cite{BDS} and Lemma $2.5$ in \cite{FM}.

\begin{proposition} Let $k>1$ be an integer, assume that $q_\ell>-\overline{q}'_\ell$ for all $\ell\in\mathcal{N}$, with $\mathcal{N}$ and $\{\overline{q}'_\ell : \ell\in\mathcal{N}\}$ as in Corollary \ref{interpolationwithnorms1d}, and assume that {\rm (H1)} and {\rm (H2)} are satisfied. Then, the infimum \eqref{optimalprofile} is a minimum.
\end{proposition}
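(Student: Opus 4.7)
The plan is to apply the direct method to the infimum \eqref{optimalprofile}, exploiting the identity $m=\lim_{T\to+\infty}m(T)$ proved above. I would first show that each $m(T)$ is attained. On the bounded interval $(-T,T)$, Corollary \ref{interpolationwithnorms1d} applied to a tiling by unit intervals gives the coercivity estimate
\begin{equation*}
\int_{-T}^T \Bigl[W(v)+\sum_{\ell=1}^k q_\ell |v^{(\ell)}|^2\Bigr]\,dt\geq \delta\int_{-T}^T \Bigl[W(v)+\sum_{\ell=1}^k |v^{(\ell)}|^2\Bigr]\,dt,
\end{equation*}
the boundary conditions $v=\mathrm{sgn}$ near $\pm T$ together with the $H^1$-bound providing the missing $L^\infty$-control; weak lower semicontinuity in $H^k((-T,T))$ follows from convexity of the top-order term (recall $q_k=1$), Rellich for the intermediate terms, and Fatou for $W\geq 0$.

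Next, picking $T_n\to+\infty$ and letting $v_n:=v_{T_n}$ (extended by $\mathrm{sgn}$ outside $(-T_n,T_n)$), the sequence $\{v_n\}$ is minimizing for $m$. I would normalize by a translation so that $v_n(0)=0$ is the infimum of the zeros of $v_n$, which in particular ensures $v_n\leq 0$ on $(-\infty,0]$ for every $n$. Summing Corollary \ref{interpolationwithnorms1d} over a tiling of $\R$ by unit intervals yields a uniform bound
\begin{equation*}
\int_\R\Bigl[W(v_n)+\sum_{\ell=1}^k |v_n^{(\ell)}|^2\Bigr]\,dt\leq C,
\end{equation*}
which combined with the $C^{1/2}$-bound coming from $v_n'\in L^2(\R)$ controls $v_n$ uniformly in $H^k_{\mathrm{loc}}(\R)\cap L^\infty_{\mathrm{loc}}(\R)$. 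Extracting a subsequence, $v_n\rightharpoonup u_*$ in $H^k_{\mathrm{loc}}(\R)$, strongly in $H^{k-1}_{\mathrm{loc}}$ and uniformly on compacts by one-dimensional Sobolev embedding. Weak lower semicontinuity of the top-order term, strong convergence for the intermediate ones, and Fatou for $W$ then yield
\begin{equation*}
\int_\R\Bigl[W(u_*)+\sum_{\ell=1}^k q_\ell |u_*^{(\ell)}|^2\Bigr]\,dt\leq \liminf_{n\to+\infty}\int_\R\Bigl[W(v_n)+\sum_{\ell=1}^k q_\ell |v_n^{(\ell)}|^2\Bigr]\,dt=m.
\end{equation*}

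The hard part will be verifying admissibility of $u_*$, i.e.\ $\lim_{t\to\pm\infty}u_*(t)=\pm 1$. Since $u_*\in C^{k-1}$ with $u_*(0)=0$ and $u_*\leq 0$ on $(-\infty,0]$, and since $u_*$ is uniformly H\"older continuous because $u_*'\in L^2(\R)$, the integrability $\int_\R W(u_*)\,dt<+\infty$ together with (H1)-(H2) forces any limit point of $u_*(t)$ as $t\to\pm\infty$ to lie in $\{-1,+1\}$: otherwise, uniform continuity would produce intervals of fixed length at arbitrarily large $|t|$ on which $W(u_*)$ is bounded below by a positive constant, contradicting integrability. The sign constraint then forces $L_-:=\lim_{t\to-\infty}u_*(t)=-1$. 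To obtain $L_+:=\lim_{t\to+\infty}u_*(t)=+1$ I would invoke a tightness property of the transition regions: for every $\eta>0$ the set $\{t\in\R: \min(|v_n-1|,|v_n+1|)\geq \eta\}$ has Lebesgue measure bounded by $\|W(v_n)\|_{L^1(\R)}/c_\eta$, where $c_\eta:=\inf\{W(s):\min(|s-1|,|s+1|)\geq \eta\}>0$ by (H1). A non-trivial excursion of $v_n$ from $+1$ back to $-1$ and forward again would then add a definite positive amount of energy (comparable to the transition cost $m$), incompatible with $\{v_n\}$ being minimizing for large $n$. Hence, after possibly adjusting the translation, $v_n$ stays close to $+1$ on $(R_\eta,+\infty)$ uniformly in $n$, and this information is transferred to $u_*$ by uniform convergence on compacts, giving $L_+=+1$. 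Once $u_*$ is admissible, the reverse inequality $\int_\R[W(u_*)+\sum q_\ell|u_*^{(\ell)}|^2]\,dt\geq m$ is immediate from the definition of $m$, so $u_*$ is a minimizer; this mirrors the arguments in Propositions 4.1-4.2 of \cite{BDS} and Lemma 2.5 of \cite{FM}.
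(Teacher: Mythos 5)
Your overall scaffold (direct method, coercivity from Corollary \ref{interpolationwithnorms1d}, weak $H^k_{\mathrm{loc}}$ compactness, strong convergence of intermediate derivatives by Rellich, Fatou for $W$, and weak lower semicontinuity for the leading $|v^{(k)}|^2$) is the same as the paper's and is sound. Passing through the attained minimizers of $m(T_n)$ rather than an arbitrary minimizing sequence is a harmless variant. Your observation that any limit point of $u_*$ at $\pm\infty$ must lie in $\{-1,1\}$ via uniform H\"older continuity and integrability of $W$ is correct, and with your ``leftmost zero'' normalization the conclusion $\lim_{t\to-\infty}u_*(t)=-1$ follows cleanly.

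The gap is the argument for $\lim_{t\to+\infty}u_*(t)=+1$. The ``tightness'' statement you give only bounds $\mathcal{L}^1(\{t:\min(|v_n-1|,|v_n+1|)\geq\eta\})$; it does not prevent that bounded set from drifting to $+\infty$ with $n$ (equivalently, the transition to $+1$ could happen at positions $\to+\infty$, leaving $u_*$ near $-1$ for large positive $t$). The claim that the putative extra excursion of $v_n$ ``adds a definite positive amount of energy comparable to $m$'' is exactly what has to be proved, and it is not automatic: the problem $m$ has infimum over profiles whose \emph{all} intermediate derivatives vanish at the endpoints, whereas the piece of $v_n$ to the right of any naive cut point has no a priori derivative control there, so you cannot directly charge it the cost $m$. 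This is precisely the role of Lemma 3.2 of \cite{BDS} in the paper's proof: using local $H^k$ convergence one extracts cut points $t_j\to+\infty$ along a subsequence at which $u_{n_j}(t_j)\to-1$ \emph{and} $u_{n_j}^{(\ell)}(t_j)\to 0$ for every $\ell\in\{1,\dots,k-1\}$; the energy to the right of $t_j$ is then bounded below by $m-\widetilde m(u_{n_j}(t_j),\dots,u_{n_j}^{(k-1)}(t_j))$ with $\widetilde m$ vanishing, forcing the energy to the left of $t_j$ to tend to $0$; by the coercivity of the energy (Corollary \ref{interpolationwithnorms1d} again) this implies $u_*\equiv-1$, contradicting $u_*(0)=0$. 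Also, your phrase ``after possibly adjusting the translation'' is dangerous: re-translating $v_n$ destroys the normalization $v_n(0)=0$ (and hence the sign constraint you relied on for $L_-$). So the conclusion needs the explicit energy-splitting argument via derivative control at the cut points; the measure/tightness heuristic alone does not close the step.
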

\begin{proof}
Consider $\{u_n\}_n$ a minimizing sequence and, by translation invariance, assume that $u_n(0)=0$ for every $n\in \N$. Applying Corollary \ref{interpolationwithnorms1d} on each interval $(j,j+1),\, j\in \mathbb{Z}$ and summing over $j$, we have
\begin{equation}\label{finalremark-1}
    \int_{-\infty}^{+\infty}\Bigl[W(u_n)+\sum_{\ell=1}^kq_\ell|u_n^{(\ell)}|^2\Bigr]\,dt \geq \delta \int_{-\infty}^{+\infty}\Bigl[W(u_n)+\sum_{\ell=1}^k|u_n^{(\ell)}|^2\Bigr]\,dt
\end{equation}
for all $n\in\N$, and then
\begin{equation}\label{finalremark0}
    \sup_n \int_{-\infty}^{+\infty}\Bigl[W(u_n)+\sum_{\ell=1}^k|u_n^{(\ell)}|^2\Bigr]\,dt < +\infty.
\end{equation}
We apply the Fundamental Theorem of Calculus and H\"older's inequality to obtain
\begin{equation*}
|u_n(t)|\leq \int_0^t |u_n'(s)|\,ds \leq \sqrt{t}\|u'_n\|_{L^2(\R)}     \qquad \text{ for all } t\in \R, 
\end{equation*}
which, combined with \eqref{finalremark0}, implies that for every $T>0$ there exists a positive constant $C(T)$ such that 
$$\sup_n \|u_n\|_{H^k((-T,T))}\leq C(T).$$
Therefore, there exists $u\in H^k_{\text{loc}}(\R)$ such that, up to subsequences, $\{u_n^{(\ell)}\}_n$ converges locally uniformly on compact subsets of $\R$ to $u^{(\ell)}$ for all $\ell\in\{0,...,k-1\}$, with the convention that $v^{(0)}=v$, and $u_n^{(k)}\rightharpoonup u^{(k)}$ weakly in $L^2(\R)$. Note that, in particular, $u(0)=0$.

\noindent As for the lower semicontinuity of the energy, we first note that, as a consequence of Corollary \ref{interpolationwithnorms1d}, we have
\begin{equation}\label{positivity}
    \int_{-\infty}^{-T} \Bigl[W(u_n)+\sum_{\ell=1}^kq_\ell|u_n^{(\ell)}|^2\Bigr]\,dt + \int_{T}^{+\infty} \Bigl[W(u_n)+\sum_{\ell=1}^kq_\ell|u_n^{(\ell)}|^2\Bigr]\,dt  \geq 0
\end{equation}
for all $n\in \N$ and $T\in \N$. Then, we observe that Rellich's Theorem implies that $u_n^{(\ell)}\to u^{(\ell)}$ in $L^2((-T,T))$ as $n\to+\infty$ for every $\ell\in\{1,...,k-1\}$ and $T>0$, which in turn yields
\begin{equation}\label{strongconvergence}
    \lim_{n\to+\infty} \int_{-T}^{T}\Bigl[\sum_{\ell=1}^{k-1}q_\ell|u_n^{(\ell)}|^2\Bigr]\,dt = \int_{-T}^{T}\Bigl[\sum_{\ell=1}^{k-1}q_\ell|u^{(\ell)}|^2\Bigr]\,dt
\end{equation}
for every $T>0$; moreover, by Fatou's lemma and the lower semicontinuity of the $L^2$-norm, we have
\begin{equation}\label{weakconvergence}
    \liminf_{n\to+\infty}  \int_{-\infty}^{+\infty}\Bigl[W(u_n)+|u_n^{(k)}|^2\Bigr]\,dt \geq \int_{-\infty}^{+\infty}\Bigl[W(u)+|u^{(k)}|^2\Bigr]\,dt.
\end{equation}
Combining \eqref{positivity}, \eqref{strongconvergence}, and \eqref{weakconvergence}, and recalling that $q_k=1$, we obtain
\begin{align} \notag
    m=\liminf_{n\to+\infty} \int_{-\infty}^{+\infty} \Bigl[W(u_n)+\sum_{\ell=1}^kq_\ell|u_n^{(\ell)}|^2\Bigr]\,dt  & \geq \liminf_{n\to+\infty} \int_{-T}^{T} \Bigl[W(u_n)+\sum_{\ell=1}^kq_\ell|u_n^{(\ell)}|^2\Bigr]\,dt  \\ \label{finalremark-2}
    & \geq \int_{-T}^{T} \Bigl[W(u)+\sum_{\ell=1}^kq_\ell|u^{(\ell)}|^2\Bigr]\,dt
\end{align}
for all $T\in \N$.

\noindent Recall that $\{u_n\}_n$ is a minimizing sequence, hence, by \eqref{finalremark-1} and Fatou's Lemma, we infer
\begin{equation*}
     m = \liminf_{n\to+\infty}\int_{-\infty}^{+\infty}\Bigl[W(u_n)+\sum_{\ell=1}^kq_\ell|u_n^{(\ell)}|^2\Bigr]\,dt \geq \delta \int_{-\infty}^{+\infty}\Bigl[W(u)+\sum_{\ell=1}^k|u^{(\ell)}|^2\Bigr]\,dt;
\end{equation*}
therefore, we can pass to the limit as $T\to+\infty$ in \eqref{finalremark-2} by Dominated Convergence Theorem to obtain
\begin{equation*}
    m \geq \int_{-\infty}^{+\infty} \Bigl[W(u)+\sum_{\ell=1}^kq_\ell|u^{(\ell)}|^2\Bigr]\,dt.
\end{equation*}
If $\lim_{t\to\pm \infty} u(t)=\pm1$, the proof is completed; while, if $\lim_{t\to\pm \infty} u(t)=\mp1$, a minimizer is $u(-t)$. For this reason, in order to conclude, we prove that both $\lim_{t\to-\infty} u(t)$ and $\lim_{t\to+\infty} u(t)$ exist, and that $\{\lim_{t\to-\infty} u(t), \lim_{t\to+\infty} u(t) \} =\{-1,1\}$.     

As a first step we prove the existence of the limit at $+\infty$, the other case being analogous.  To see this, it is useful to observe that $u'\in H^1(\R)$, which implies 
\begin{equation} \label{finalremark1}
    \lim_{t\to\pm\infty} u'(t)=0.
\end{equation}
If $\lim_{t\to+\infty} u(t)$ exists, it equals $-1$ or $1$, otherwise $\int_{-\infty}^{+\infty} W(u)\,dt=+\infty$ which is a contradiction. We are left with considering the case there exist real numbers $a,b$, with $a<b$, and two increasing sequences $\{x_n\}_n, \{y_n\}_n$ such that 
\begin{equation*}
    \lim_{n\to+\infty}x_n=+\infty, \quad \lim_{n\to+\infty}y_n=+\infty, \quad \lim_{n\to+\infty} u(x_n) = a, \quad  \lim_{n\to+\infty} u(y_n) = b. 
\end{equation*}
Without loss of generality, we can suppose 
\begin{equation*}
   \quad x_n<y_n, \quad  u(x_n)\leq a+\frac{b-a}{8}, \quad u(y_n)\geq b-\frac{b-a}{8} \qquad \text{ for all } n\in \N;
\end{equation*}
moreover, by the continuity of $u$, we can also assume that both $-1$ and $1$ are not contained in the interval $(a,b)$.

\noindent If there exists $\{n_j\}_j$ such that $|x_{n_j}-y_{n_j}|\leq 2$ for every $j\in \N$, then, by Lagrange's Theorem, there exists $z_{n_j}\in (x_{n_j},y_{n_j})$ such that
\begin{equation*}
    |u'(z_{n_j})|\geq \frac{|u(y_{n_j})-u(x_{n_j})|}{2}\geq \frac{3}{8}(b-a), 
\end{equation*}
hence, $\limsup_{t\to+\infty} |u'(t)|\geq 3(b-a)/8$, which contradicts \eqref{finalremark1}. Otherwise, we may assume that $|x_n-y_n|\geq 2$ for every $n\in \N$. We set
\begin{equation*}
    \widetilde{x}_n:=\max\{t \in (x_n, y_n): u(t)\leq a+\frac{b-a}{4} \}
\end{equation*}
and 
\begin{equation*}
    \widetilde{y}_n:=\min\{t \in (x_n, y_n): u(t)\geq b-\frac{b-a}{4} \}.
\end{equation*}
If $|\widetilde{x}_n-\widetilde{y}_n|\leq1$ for infinitely many $n\in \N$, the conclusion follows by Lagrange's Theorem as before, otherwise, it holds that $|\widetilde{x}_n-\widetilde{y}_n|\geq1$ for all $n\geq N_o$ with some $N_o\in\N$ large enough. Recalling that the interval $(a,b)$ does not contain the points $-1,1$, the above observation and (H$1$) imply
\begin{equation*}
   \int_{-\infty}^{+\infty}W(u)\,dt\geq \sum _{n\geq N_o} \int_{\widetilde{x}_n}^{\widetilde{y}_n}W(u)\,dt \geq \sum _{n\geq N_o} \min\bigl\{ W(t) : t\in \bigl(a+\frac{b-a}{4} ,b-\frac{b-a}{4}\bigr)\bigr\}=+\infty, 
\end{equation*}
which is a contradiction, and concludes the proof of the existence of the limits at infinity.
We finally prove that they are different.

\noindent By contradiction suppose $\lim_{t\to\pm \infty} u(t)=-1$. By Lemma $3.2$ in \cite{BDS} and by the convergence $u_n\to u$ in $H^k_{\text{loc}}(\R)$, we find points $\{t_j\}_j$ and a subsequence of indices $\{n_j\}_j$ such that 
\begin{equation}\label{finalremark2}
 t_j\to+\infty, \quad    u_{n_j}(t_j)\to-1, \qquad u_{n_j}^{(\ell)}(t_j)\to 0 \quad \text{ as } j\to+\infty, \,\,\text{ for all } \ell\in\{1,...,k-1\}.
\end{equation}
Then we have
\begin{align*}
\int_{-\infty}^{+\infty}  \!\Bigl[W(u_{n_j})\!+\!\sum_{\ell=1}^kq_\ell|u_{n_j}^{(\ell)}|^2\Bigr]\,dt & = \int_{-\infty}^{t_j} \!\Bigl[W(u_{n_j})+\sum_{\ell=1}^kq_\ell|u_{n_j}^{(\ell)}|^2\Bigr]\,dt + \int_{t_j}^{+\infty} \!\Bigl[W(u_{n_j})\!+\!\sum_{\ell=1}^kq_\ell|u_{n_j}^{(\ell)}|^2\Bigr]dt \\
& \geq \int_{-\infty}^{t_j} \Bigl[W(u_{n_j})+\sum_{\ell=1}^kq_\ell|u_{n_j}^{(\ell)}|^2\Bigr]\,dt + m - \widetilde{m}(u_{n_j}(t_j),...,u^{(k-1)}_{n_j}(t_j)), 
\end{align*}
where we set
\begin{align*}
\widetilde{m}(u_0,...,u_{k-1}):= \inf  \Bigl\{\int_0^1 \Bigl[W(v) + \sum_{\ell=1}^kq_\ell |v^{(\ell)}|^2 \Bigr]\,dt :\, & v^{(\ell)}(0)=0 \text{ for all } \ell\in\{0,...,k-1\}, \\
& v^{(\ell)}(1)=u_\ell \text{ for all } \ell\in\{0,...,k-1\} \Bigr\},
\end{align*}
with the convention that $v^{(0)}=v$. Since $\widetilde{m}(u_0,...,u_{k-1})\to0$ as $(u_0,...,u_{k-1})\to0$, by \eqref{finalremark2} we infer that 
\begin{align*}
    m & = \liminf_{j\to+\infty} \int_{-\infty}^{+\infty}  \Bigl[W(u_{n_j})+\sum_{\ell=1}^kq_\ell|u_{n_j}^{(\ell)}|^2\Bigr]\,dt \\
    & \geq \liminf_{j\to+\infty} \int_{-\infty}^{t_j} \Bigl[W(u_{n_j})+\sum_{\ell=1}^kq_\ell|u_{n_j}^{(\ell)}|^2\Bigr]\,dt + m - \widetilde{m}(u_{n_j}(t_j),...,u^{(k-1)}_{n_j}(t_j)) \\
    & \geq \liminf_{j\to+\infty} \int_{-\infty}^{t_j} \Bigl[W(u_{n_j})+\sum_{\ell=1}^kq_\ell|u_{n_j}^{(\ell)}|^2\Bigr]\,dt + m,
\end{align*}
which implies 
\begin{equation*}
    \liminf_{j\to+\infty} \int_{-\infty}^{t_j} \Bigl[W(u_{n_j})+\sum_{\ell=1}^kq_\ell|u_{n_j}^{(\ell)}|^2\Bigr]\,dt \leq 0.
\end{equation*}
But, by Corollary \eqref{interpolationwithnorms1d} and Fatou's Lemma, it holds that
\begin{align*}
   \liminf_{j\to+\infty} \int_{-\infty}^{t_j} \Bigl[W(u_{n_j})+\sum_{\ell=1}^kq_\ell|u_{n_j}^{(\ell)}|^2\Bigr]\,dt & \geq
   \delta\liminf_{j\to+\infty} \int_{-\infty}^{t_j} \Bigl[W(u_{n_j})+\sum_{\ell=1}^k|u_{n_j}^{(\ell)}|^2\Bigr]\,dt \\
   & \geq \delta\int_{-\infty}^{+\infty} \Bigl[W(u)+\sum_{\ell=1}^k|u^{(\ell)}|^2\Bigr]\,dt \geq   0;
\end{align*}
therefore, we deduce that 
that $u$ is identically equal to $-1$. This contradicts $u(0)=0$, which concludes the proof.
\end{proof}

\begin{remark} {\rm Restoring the general dimension $d$, we may deal with the case of special norms on tensors that are `compatible with slicing' upon supposing the non-negativity of the coefficients $q_1,...,q_{k-1}$. More precisely, we suppose that for every $\ell\in\{1,...,k\}$ the norm $|\cdot|_\ell$ satisfies
\begin{equation}\label{compatibleslicing}
|T|_\ell \geq |T(\xi,...,\xi)| \qquad \text{ for every } \xi\in\Sd,
\end{equation}
and we assume that $q_\ell\geq 0$ for every $\ell\in\{1,...,k-1\}$. Also in this instance, the density energy $g$ defined in \eqref{density} is the constant $m$ given by \eqref{optimalprofile}.

The argument is similar to the one in the proof of Proposition \ref{positivedensity}. For fixed $\nu\in \Sd$ and $\eta>0$, let $v$ and $\e$ be such that
\begin{equation*}
        g(\nu)+\eta \geq \int_{{Q_1^\nu}} \Bigl[\frac{1}{\e}W(v)+\sum_{\ell=1}^{k}q_\ell\e^{2\ell-1} |\nabla^{(\ell)}v|_\ell^2\Bigr]\,dx.
    \end{equation*}
Applying Fubini's Theorem, \eqref{compatibleslicing}, and the slicing properties of Sobolev functions, we obtain
\begin{align*}
         g(\nu) +\eta & \geq \int_{{Q_1^\nu}} \Bigl[\frac{1}{\e}W(v)+\sum_{\ell=1}^{k}q_\ell \e^{2\ell-1}|\nabla^{(\ell)}v|_\ell^2\Bigr]\,dx \\
         & \geq \int_{Q_1^\nu \cap \nu^\perp}\int_{-\frac{1}{2}}^{\frac{1}{2}} \Bigl[\frac{1}{\e}W(v^{\nu,y}(t))+ \sum_{\ell=1}^{k}q_\ell\e^{2\ell-1}|(v^{\nu,y})^{(\ell)}(t)|^2\Bigr]\,dt\,d\Hd(y) \\
        & \geq \inf\Big\{\int_{-\frac{1}{2\e}}^{\frac{1}{2\e}} \Bigl[W(v)+\sum_{\ell=1}^{k}q_\ell |v^{(\ell)}|^2\Bigr]\,dx,  \ v \in  H^k\bigl(\bigl(-\frac{1}{2\e},\frac{1}{2\e}\bigr)\bigr), \\
        & \qquad \qquad \qquad \qquad \qquad \qquad \quad  v(t)=\text{sgn}(t) \text{ if } |t|>M \text{ for some } M\in\bigl(0,\frac{1}{2\e}\bigr)\Big\} \\
        & =m\Bigl(\frac{1}{2\e}\Bigr),
        \end{align*}
        where the second inequality also follows by the assumption $q_\ell\geq 0$ for every $\ell\in\{1,...,k-1\}$. By the arbitrariness of $\eta$ and $\e$, the previous chain of inequalities implies $g\geq m$. The converse inequality is obtained using as a test function for $g(\nu)$ the function $v(x):=u(\frac{x\cdot\nu}{\e})$, with $u$ an (almost) optimal one-dimensional profile for $m(1/(2\e))$.}
\end{remark} 

\section*{Acknowledgements}

The authors wish to thank Prof. \!\!Andrea Braides for having proposed the problem during his course {\em \lq\lq Singular perturbations in fractional Sobolev spaces\rq\rq} held at SISSA in the Fall session of 2024. The authors are members of Gruppo Nazionale per l'Analisi Matematica, la Probabilità e le loro Applicazioni (GNAMPA) of Istituto Nazionale di Alta Matematica (INdAM).

\bigskip

{\noindent \textbf{Competing Interests statement:} The authors declare that they have not financial or non-financial interests related to the work submitted for publication.}

{\noindent \textbf{Data Availability statement:} Data sharing not applicable to this article since no datasets were generated or analysed during the current study.}

\end{document}